\newtheorem{df}{Definition}[section]
\newtheorem{lm}{Lemma}[section]
\newtheorem{thm}{Theorem}[section]
\newtheorem{pro}{Proposition}[section]
\newtheorem{rmk}{Remark}[section]
\newtheorem{cor}{Corollary}[section]
\numberwithin{equation}{section}
\subjclass[2000]{Primary: 60H15, 37L55; Secondary: 37L30, 35R60.}
\keywords{McKean-Vlasov equation, pullback measure attractor, stochastic delay lattice system, the upper semi-continuity, tail-ends estimate.
\\ This work was supported by NSFC (12090010, 12090013, 12471170, 12071317, 12471154, 12071384). All correspondences should be addressed to Jun Shen.}
\author{ Lin Shi}
\address[Lin Shi]
{ School of Mathematical Sciences \\
University of Electronic Science and Technology of China\\
Chengdu, Sichuan 611731, P. R. China}
\email[L.~Shi]{shilinlavender@163.com}
\author{ Jun Shen}
\address[Jun Shen]
{School of Mathematics \\
Sichuan University\\
Chengdu, Sichuan 610064, P. R. China}
\email[J.~Shen]{junshen85@163.com}
\author{Kening Lu}
\address[Kening Lu]
{School of Mathematics \\
Sichuan University\\
Chengdu, Sichuan 610064, P. R. China}
\email[K.~Lu]{keninglu@scu.edu.cn}
\begin{document}

\begin{abstract}
We study the long-term behavior of the distribution of the solution process to the non-autonomous McKean-Vlasov stochastic delay lattice system defined on the integer set
$\mathbb{Z}$. Specifically, we first establish the well-posedness of solutions for this non-autonomous, distribution-dependent stochastic delay lattice system. Then, we prove the existence and uniqueness of pullback measure attractors for the non-autonomous dynamical system generated by the solution operators, defined in the space of probability measures. Furthermore, as an application of the pullback measure attractor, we prove
the ergodicity and exponentially mixing of invariant measures for the system under appropriate conditions.
Finally, we establish the upper semi-continuity of these attractors as the distribution-dependent stochastic delay lattice system converges to a distribution-independent system.
\end{abstract}

\title[pullback measure attractors of McKean-Vlasov stochastic delay lattice systems]{
pullback measure attractors and limiting behaviors of McKean-Vlasov stochastic delay lattice systems}
\maketitle

\section{introduction}
In this paper, we are concerned with the pullback measure attractors of the following non-autonomous, distribution-dependent stochastic delay lattice system defined on the integer set $\mathbb{Z}$:
\begin{equation} \label{ob-1}
\begin{split}
&\, du_i(t)-\nu(u_{i-1}(t)-2u_i(t)+u_{i+1}(t))dt+\lambda u_i(t)dt
\\
&\,= (f_i(t,u_i(t),u_i(t-r),\mathcal{L}_{u_i(t)})+g_i(t))dt
 +\sigma_i(t, u_i(t),u_i(t-r),\mathcal{L}_{u_i(t)}) dW_i(t),\quad t>\tau
\end{split}
\end{equation}
with initial data
\begin{eqnarray}\label{ob-2}
(u_i)_{\tau}(s)=\zeta_i(s), \quad \forall s \in [-r,0],
\end{eqnarray}
where $u=(u_i)_{i \in \mathbb{Z}}$ is an unknown sequence,
$\zeta=(\zeta_i)_{i \in \mathbb{Z}}$ is a given sequence, and $\nu, \lambda$, $r$ are positive constants. The functions
$f_i$ and $\sigma_i$ are nonlinear mappings for every $i \in \mathbb{Z}$, and
$g(t)=(g_i(t))_{i \in \mathbb{Z}}$ is a given time dependent sequence. The term
$u(t-r)=(u_i(t-r))_{i\in \mathbb{Z}}$ represents the dependence of the solution on the past history of the state. Additionally,
$\mathcal{L}_{u_i}$ denote the distribution of the random variable $u_i$, and $(W_i(t))_{i \in \mathbb{Z}  }$
is a sequence of independent standard two-sided
real-valued Wiener processes on a complete filtered probability space
$(\Omega, \mathcal{F}, \{\mathcal{F}_t\}_{t\in \mathbb{R}}, \mathbb{P})$.

Lattice systems have been extensively studied due to their wide range of applications in fields such as physics, biology, and engineering, including phenomena like pattern formation, propagation of nerve pulses and electric circuits. Over decades of research, a vast number of publications have emerged concerning the solutions and long-term dynamics of lattice systems. For example, studies on deterministic lattice systems can be found in
\cite{Bates99, Bates01, Bates03, Beyn03, Chow98, Gu16, Zhao14, wang06, Zhousheng15}, while stochastic lattice systems are investigated in
\cite{Bates06, Bates14, Caraballo12, Han11, wang19, wang10, wangzhao16}.

Time delays are commonly encountered in practical systems where the current state depends on past states, such as in memory processes and animal growth in biology. Consequently, lattice systems with delays have received significant attention. In recent years, the solutions and long-term dynamics of delay lattice systems have been extensively studied.
For deterministic cases, notable works include  \cite{Caraballo14, Chen10,
Han16, wangye15}, while for stochastic cases, studies such as \cite{wang16, Zhou22, Zhou13, Yan10, Zhang17} have been conducted.

Beyond these contributions, the statistical dynamics of stochastic lattice systems, including delay-dependent cases, have become a focus of research in recent years. For instance, the existence, ergodicity, and mixing of invariant measures have been explored in \cite{Li21, Li22, Chenzhang25, Chenzhang24+, Chenzhang23, wangyu24} and topics like the large deviation principle are addressed in  \cite{wang24, Chenzhang24} , among others.

Unlike previous studies, this work focuses on the non-autonomous distribution-dependent (also known as McKean-Vlasov) stochastic delay lattice system. McKean-Vlasov stochastic differential equations (MVSDEs), first introduced in \cite{McKean1966, Vlasov1968}, often arise from interacting particle systems
\cite{BH2022, DV1995, FG2015, S1991}.
A distinctive feature of these equations is their dependence not only on the states of the solutions but also on their distributions. Consequently, the Markov operators associated with MVSDEs do not form semigroups (see, e.g.,\cite{FWang2018}), making conventional methods for studying distribution-independent stochastic differential equations inapplicable to these systems.

A significant body of research has been devoted to the properties of solutions to MVSDEs. For example, the existence of solutions has been studied in
\cite{AD1995,  FHSY2022, GHL2022, HDS2021,  RZ2021, FWang2018}, while the Bismut formula is discussed in  \cite{Banos2018, RW2019}.
 Averaging principles are addressed in \cite{LWX2022, RSX2021}, and large deviation results are presented in \cite{CW2024, HLL2021, LSZZ2023}.
The existence and ergodicity of invariant measures, as well as periodic measures for finite-dimensional MVSDEs, have been explored in \cite{Bao2022,Hua2024, Hua2024b, Lia2021, FWang2023, FWang2023b, Zha2023} and \cite{Ren2021}, respectively. Moreover, the existence and uniqueness of solutions for MVSDEs with delays in both finite- and infinite-dimensional state spaces have been investigated in works such as \cite{Hein21, Huang2019}.

In this article, we establish a theory of pullback measure attractors for the non-autonomous McKean-Vlasov stochastic delay lattice system \eqref{ob-1}. The concept of measure attractors was first introduced in \cite{Schmalfuss91} for the stochastic Navier-Stokes equations, and since then, the existence of measure attractors for stochastic differential equations has been further studied in \cite{LW2024, MC1998, Morimoto1992,
Schmalfuss1997, SWeng2024}.

It is worth noting that none of the stochastic equations studied in the aforementioned works are distribution-dependent. Given the significance of distribution-dependent equations, we recently investigated invariant measures, periodic measures, and pullback measure attractors for McKean-Vlasov stochastic reaction-diffusion equations on unbounded domains in \cite{SSLW24}.
Despite these advancements, research on this topic remains limited, and many fundamental theories still need to be developed.

To that end, the first goal of this paper is the existence of pullback measure attractors for the McKean-Vlasov stochastic lattice system with delays.
To establish  the existence of pullack measure attractors for equation \eqref{ob-1},
we first define a semigroup $\{P^*_{\tau, t}\}_{t \ge \tau}$ on
the appropriate probability measure space (see \eqref{df-P} for details),
based on the solution segments  of equation \eqref{ob-1}, which is ensured by the weak uniqueness of solutions.
We then demonstrate that $\{P^*_{\tau, t}\}_{t \ge \tau}$ forms a continuous non-autonomous dynamical system on this space. The key challenge in this step is proving the continuity of $\{P^*_{\tau, t}\}_{t \ge \tau}$.
To address this, we apply Vitali theorem to establish the continuity of $\{P^*_{\tau, t}\}_{t \ge \tau}$
 on any closed subset of a suitable subspace, rather than the entire space (see Lemma \ref{cor}).
This necessitates an $L^4$ estimate for the solution segments in $C([-r, 0], \ell^2)$, which is crucial for constructing a $\mathcal{D}$-pullback absorbing set.
 Next, we prove that $\{P^*_{\tau, t}\}_{t \ge \tau}$ is $\mathcal{D}$-pullback asymptotically compact on such subspaces.
A major obstacle here is establishing the tightness of the probability distributions for the solution segments of equation \eqref{ob-1}. Since the stochastic lattice system \eqref{ob-1} with delays
is defined on the full space $\ell^2$, and Sobolev embeddings are not compact on unbounded domains,
the pullback asymptotic compactness of $\{P^*_{\tau, t}\}_{t \ge \tau}$ on the target space does not follow from the uniform estimates
of solutions and the Sobolev embeddings.
To overcome this, we use uniform tail-end estimates for the solution segments and apply the Arzelà-Ascoli theorem to address the non-compactness of Sobolev embeddings in $\ell^2$ (see Lemma \ref{tight}). Moreover, because the lattice system includes delays, the uniform estimates for the solution segments are more intricate compared to cases without delays.

Additionally, as an application of the pullback measure attractor, we prove the ergodicity and exponential mixing of the invariant measures for equation \eqref{ob-1}, under suitable conditions.

The second objective of this paper is to investigate the limiting behavior of pullback measure attractors for
the distribution-dependent stochastic lattice equation \eqref{ob-1} with delays.  Specifically, we consider the following distribution-dependent stochastic delay lattice system \eqref{ob-1} parameterized by $\epsilon \in (0, 1)$:
\begin{equation} \label{ob-1'}
\begin{split}
&\, du^\epsilon_i(t)-\nu(u^\epsilon_{i-1}(t)-2u^\epsilon_i(t)+u^\epsilon_{i+1}(t))dt+\lambda u^\epsilon_i(t)dt
\\
&\,= (f^\epsilon_i(t,u^\epsilon_i(t),u^\epsilon_i(t-r),\mathcal{L}_{u^\epsilon_i(t)})+g_i(t))dt
 +\sigma^\epsilon_i(t, u^\epsilon_i(t),u^\epsilon_i(t-r), \mathcal{L}_{u^\epsilon_i(t)}) dW_i(t), \quad t>\tau.
\end{split}
\end{equation}
We prove that, under suitable conditions, as $\epsilon \to 0$,
the pullback measure attractor of \eqref{ob-1'} is upper semi-continuous with respect to the pullback measure attractor of the corresponding distribution-independent stochastic lattice equation with delays:
\begin{equation} \label{ob-1''}
\begin{split}
&\, du_i(t)-\nu(u_{i-1}(t)-2u_i(t)+u_{i+1}(t))dt+\lambda u_i(t)dt
\\
&=\, (f_i(t,u_i(t),u_i(t-r))+g_i(t))dt
 +\sigma(t, u_i(t),u_i(t-r)) dW_i(t), \quad  t>\tau.
\end{split}
\end{equation}

This paper is organized as follows. In Section 2, we recall some basic concepts and introduce the necessary assumptions. Section 3 is devoted to establishing the well-posedness of solutions for the distribution-dependent stochastic delay lattice system \eqref{ob-1}-\eqref{ob-2}. In Section 4, we derive various uniform estimates of the solutions, including the uniform tail-end estimates.  In Section 5, the existence and uniqueness of pullback measure attractors for equation \eqref{ob-1} are established. In Section 6, we give the conditions of the ergodicity and exponentially mixing of invariant measures for equation \eqref{ob-1}.
Finally, in Section 7, we demonstrate the upper semi-continuity of pullback measure attractors for equation \eqref{ob-1'} as $\epsilon\rightarrow 0^+$.

Throughout this paper, we use $\| \cdot \|$ and $(\cdot, \cdot)$  to denote the norm
and the inner product of $\ell^{\,2}$, respectively, and $\| \cdot\|_{p}$ to denote
the norm  in  $\ell^{\,p}$.

\section{Preliminaries}
In this section, we begin by recalling some fundamental concepts related to pullback measure attractors, followed by introducing the basic assumptions for the lattice system \eqref{ob-1}.

Let $X$ be a separable Banach space with norm  $\|\cdot\|_X$
and Borel $\sigma$-algebra  $\mathcal B(X)$. Denote by $C_b(X)$ the space of bounded continuous functions $\xi: X\rightarrow \mathbb{R}$.
Let $L_b(X)$ be the space of bounded Lipschitz continuous functions $\xi: X\rightarrow \mathbb{R}$, equipped with norm $\|\cdot\|_{L_b(X)}$,
$$
\|\xi \|_{L_b(X)}
=\sup_{x\in X} |\xi (x)|
+    \mathop {\sup }\limits_{x_1 ,x_2  \in X,x_1\neq x_2} \frac{{\left| {\xi \left( {x_1 } \right)}-
{\xi \left( {x_2 } \right)} \right|}}{{\|x_1-x_2\|_X}} .
$$

Let $\mathcal P(X)$
be the space of probability measures on $(X,\mathcal B(X))$.
If $\mu, \mu_n \in \mathcal P(X)$ and for every $\xi \in C_b(X)$, $\int_X \xi (x)  \mu_n (dx) \to \int_X \xi (x) \mu (dx), $
then we say that $\{\mu_n\}_{n=1}^\infty$ converges to $\mu$ weakly.
The weak topology of $\mathcal P(X)$ is metrizable with a metric given by
 $$
  d_{\mathcal{P} (X)}
  (\mu_1, \mu_2)
  =\sup_{\xi \in L_b(X), \ \|\xi \|_{L_b} \le 1}
  \left |\int_X \xi d\mu_1 -\int_X \xi d\mu_2 \right |,\quad
 \ \forall \mu_1,\mu_2\in \mathcal P(X).
$$
It is well known that $(\mathcal P(X),  d_{\mathcal{P} (X)})$ is a polish space.
Moreover, a sequence $\{\mu_n\}_{n=1}^{\infty} \subseteq \mathcal{P}(X)$ converges to $\mu$
in $(\mathcal{P}(X),d_{\mathcal{P}(X)})$ if and only if $\{\mu_n\}_{n=1}^{\infty}$ converges to $\mu$ weakly.
For every $p\ge 1$, let $(\mathcal P_p(X), \mathbb{W}_p^X)$  denote  the
Polish space defined by
$$
\mathcal P_p \left( X \right) = \left\{ {\mu  \in \mathcal
P\left( X \right): {\mu(\|\cdot\|_X^p):=}
\int_X {\|x\|_X^p \mu \left( {dx} \right) <    \infty } } \right\}
$$
and
$$
\mathbb{W}_p^{X} ( \mu , \upsilon ) =
\inf\limits_{ \pi \in \Pi ( \mu, \upsilon ) }
\Big (
\int_{ X \times X}
    \| x-y  \|_X^p \pi (dx, dy)
\Big )^{ \frac{1}{p} },\quad
\forall \mu, \upsilon \in \mathcal{P}_p ( X ),
$$
where $ \Pi ( \mu, \upsilon ) $ is the  set of all
couplings of $\mu$ and $\upsilon$.
The metric $\mathbb{W} _p^{X}$
is called the
Wasserstein distance.

Given  $r>0$, denote by
$$
 B_ {\mathcal P_p(X)} (r)  = \left\{ {\mu  \in \mathcal P_p \left( X \right)
 	:
 	\left(\int_X \|x\|_X^p \mu \left( {dx} \right)\right)^{\frac{1}{p}} \leq r } \right\}.
$$
 A  subset $S\subseteq {\mathcal P}_p \left( X \right)$ is
  bounded  if there is $r>0$ such that
  $S\subseteq B_
{{\mathcal P}_p \left( X \right)
}
(r)$. If   $S$ is bounded in   $ {\mathcal P}_p ( X )$,
then we set
$$
\|S\|_{{\mathcal P}_p ( X )}
=
\sup_{
\mu \in S
} \left(\int_X \|x\|_X^p \mu
 ( {dx}  )\right)^{\frac{1}{p}}.
$$

Note that $(\mathcal P_p(X), \mathbb{W}_p^X)$ is a Polish space, whereas $(\mathcal{P}_p (X), d_{\mathcal{P}(X)})$
is not complete. However,  for every $r>0$, the set $B_ {\mathcal P_p(X)} (r)$ is a closed subset of  $\mathcal{P}(X)$ with respect to the
metric $d_{\mathcal{P}(X)}$. Consequently, the space $(B_ {\mathcal P_p(X)} (r), \ d_{\mathcal{P}(X)})$ is complete for every $r>0$.

Next, we recall the definition of non-autonomous dynamical systems in the metric space
$(\mathcal {P}_p (X), d_{\mathcal{P}(X)})$  along with some related concepts.
\begin{df}
A family  $\Phi=\{\Phi(t,\tau): t\in \mathbb{R}^+,\,\tau\in \mathbb{R}\}$ of mappings from
$(\mathcal {P}_p (X), d_{\mathcal{P}(X)})$
 to $(\mathcal {P}_p (X), d_{\mathcal{P}(X)})$
 is called  a  non-autonomous dynamical system  on
 $(\mathcal {P}_p (X), d_{\mathcal{P}(X)})$
 if  $\Phi$ satisfies: for all $\tau\in \mathbb{R}$ and $t, s\in \mathbb{R}^+$:

\begin{description}
\item[(i)]  $\Phi(0,\tau)=I$,  where $I $ is the identity operator on
  $(\mathcal {P}_p (X), d_{\mathcal{P}(X)})$;

\item[(ii)] $\Phi(t+s,\tau)=\Phi(t,s+\tau)\circ \Phi(s,\tau)$.
\end{description}
\end{df}

If
$\Phi$ is
 a  non-autonomous dynamical system  on
 $(\mathcal {P}_p (X), d_{\mathcal{P}(X)})$,
 and,  for every bounded subset $S \subset\mathcal {P}_p (X)$,
$\Phi$
is continuous from
$ (S,  d_{\mathcal{P}(X)})$
to
$(\mathcal {P}_p (X), d_{\mathcal{P}(X)})$,
then we say that $\Phi$ is continuous on
bounded subsets of
 $\mathcal {P}_p (X)$.

Let $\mathcal{D}$ be a collection of  families of nonempty bounded subsets  $D(\tau) \subseteq \mathcal{P}_p(X)$,
parametrized by $\tau \in \mathbb{R}$. 
Such a collection $\mathcal{D}$ is called inclusion-closed if $D=\{D(\tau): \tau \in \mathbb{R}\} \in \mathcal{D}$ implies that every
family $\tilde{D}=\{\tilde{D}(\tau): \emptyset\neq \tilde{D}(\tau)\subseteq D(\tau), \; \forall \tau \in \mathbb{R}\}$ also belongs to $\mathcal{D}$.
\begin{df}
A family $K=\{K(\tau): \tau \in \mathbb{R}\} \in \mathcal{D}$ is called a $\mathcal{D}$-pullback absorbing set for
$\Phi$ if for each $\tau \in \mathbb{R}$ and $D \in \mathcal{D}$, there exists $T=T(\tau,D)>0$ such that
\begin{eqnarray*}
\Phi(t,\tau-t)D(\tau-t)\subseteq K(\tau), \quad  \forall t \ge T.
\end{eqnarray*}
\end{df}

\begin{df}
A non-autonomous dynamical system $\Phi$ defined on $(\mathcal {P}_p (X), d_{\mathcal{P}(X)})$ is said to
$\mathcal{D}$-pullback asymtotically compact if for each $\tau \in \mathbb{R}$,
$\{\Phi(t_n, \tau-t_n)\mu_n\}_{n=1}^{\infty}$ has a convergent subsequence in $\mathcal{P}_p(X)$ whenever
$t_n \rightarrow \infty$ and $\mu_n \in D(\tau-t_n)$ with $D \in \mathcal{D}$.
\end{df}

\begin{df}
A family $\mathcal{A}=\{\mathcal{A}(\tau): \tau \in \mathbb{R}\} \in \mathcal{D}$ is called a $\mathcal{D}$-pullback measure attractor for
$\Phi$ if the following conditions are satisfied.
\begin{description}
\item[(i)] $\mathcal{A}(\tau)$ is compact in $\mathcal{P}_p(X)$ for each $\tau \in \mathbb{R}$;
\\
\item[(ii)] $\mathcal{A}(\tau)$ is invariant, namely, $\Phi(t,\tau)\mathcal{A}(\tau)=\mathcal{A}(\tau+t)$ for all $\tau \in \mathbb{R}$ and $t \in \mathbb{R}^+$;
\\
\item[(iii)] $\mathcal{A}(\tau)$ attracts every set in $\mathcal{D}$, namely, for each $D=\{D(\tau): \tau \in \mathbb{R}\} \in \mathcal{D}$,
$$\lim_{t \rightarrow \infty} d_{\mathcal{P}_p(X)}(\Phi(t,\tau-t)D(\tau-t),\mathcal{ A}(\tau))=0,$$
where $d_{\mathcal{P}_p(X)}(\cdot, \cdot)$ denotes the Hausdorff semi-metric between any two subsets of $\mathcal{P}_p(X)$.
\end{description}
\end{df}

 Since $(\mathcal {P}_p (X), d_{\mathcal{P}(X)})$ is a metric space,  by Theorem 2.23 (more specifically Proposition 3.6)
 and Theorem 2.25 in \cite{wang12}, we obtain the following criterion for the existence and uniqueness of pullback measure attractors.

\begin{pro}\label{exatt}
	 Let $\mathcal D$ be an
	  inclusion-closed collection of
 families of  nonempty bounded subsets of
 $ \mathcal {P}_p (X),$
 and
  $\Phi$
 be  a  non-autonomous dynamical system
 on
 $(\mathcal {P}_p (X), d_{\mathcal{P}(X)})$.
 Suppose $\Phi$ is continuous on
bounded subsets of
 $\mathcal {P}_p (X)$.
  If  $\Phi$ has a
 closed  $\mathcal D$-pullback  absorbing
set $K\in \mathcal D$ and   is
$\mathcal D$-pullback asymptotically
 compact in
 $(\mathcal {P}_p (X), d_{\mathcal{P}(X)})$,
 then
$ \Phi$   has a unique  $\mathcal D$-pullback
  measure attractor $\mathcal A \in   \mathcal D$
  in $(\mathcal {P}_p (X), d_{\mathcal{P}(X)})$.
 \end{pro}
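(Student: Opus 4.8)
The plan is to obtain Proposition~\ref{exatt} as a direct instance of the abstract theory of pullback attractors developed in \cite{wang12}, applied to the metric space $Y:=(\mathcal{P}_p(X), d_{\mathcal{P}(X)})$. The only structural requirement that abstract theory places on the phase space is that it be a metric space, which $Y$ is; completeness of $Y$ is not needed, since the compactness needed in the construction is supplied directly by hypothesis. Thus the proof reduces to checking that the standing assumptions of Theorem~2.23 (equivalently Proposition~3.6) and Theorem~2.25 of \cite{wang12} coincide with the ones we have assumed: $\Phi$ is a non-autonomous dynamical system on $Y$, it is continuous on bounded subsets of $\mathcal{P}_p(X)$, it possesses a closed $\mathcal{D}$-pullback absorbing set $K\in\mathcal{D}$, and it is $\mathcal{D}$-pullback asymptotically compact in $Y$ — which are precisely the hypotheses in the statement.

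For completeness I would then recall the underlying construction. For $D\in\mathcal{D}$ and $\tau\in\mathbb{R}$ define the pullback $\omega$-limit set
$$
\Omega(D,\tau)=\bigcap_{s\ge 0}\overline{\bigcup_{t\ge s}\Phi(t,\tau-t)D(\tau-t)},
$$
the closure being taken in $Y$. Using the absorbing set $K$ together with $\mathcal{D}$-pullback asymptotic compactness, the usual argument (see \cite{wang12}) shows that each $\Omega(D,\tau)$ is nonempty, compact in $\mathcal{P}_p(X)$, and pullback attracts $D(\tau-t)$; here one uses that any sequence $\Phi(t_n,\tau-t_n)\mu_n$ with $t_n\to\infty$ and $\mu_n\in D(\tau-t_n)$ has a subsequence converging in $\mathcal{P}_p(X)$, and that convergence in $\mathbb{W}_p^X$ implies convergence in $d_{\mathcal{P}(X)}$, so the limit lies in $Y$. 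Setting $\mathcal{A}(\tau):=\Omega(K,\tau)$, which coincides with $\overline{\bigcup_{D\in\mathcal{D}}\Omega(D,\tau)}$ since $K$ absorbs every $D\in\mathcal{D}$, and noting $\mathcal{A}(\tau)\subseteq K(\tau)$, the inclusion-closedness of $\mathcal{D}$ yields $\mathcal{A}\in\mathcal{D}$. Invariance $\Phi(t,\tau)\mathcal{A}(\tau)=\mathcal{A}(\tau+t)$ follows from the cocycle property (ii) and the continuity of $\Phi$ on bounded subsets of $\mathcal{P}_p(X)$, and uniqueness is standard: if $\mathcal{A}'\in\mathcal{D}$ is another $\mathcal{D}$-pullback measure attractor, invariance of $\mathcal{A}'$ and the attraction property of $\mathcal{A}$ force $\mathcal{A}'(\tau)\subseteq\mathcal{A}(\tau)$, and symmetrically the reverse inclusion holds.

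The only point requiring a little care — and where the non-completeness of $(\mathcal{P}_p(X), d_{\mathcal{P}(X)})$ could in principle intervene — is ensuring that the $\omega$-limit sets are genuinely nonempty and compact and that the limits produced by asymptotic compactness stay in $\mathcal{P}_p(X)$. This is resolved by the two facts already recorded in Section~2: $\mathcal{D}$-pullback asymptotic compactness is formulated with convergence in $\mathcal{P}_p(X)$ (with $\mathbb{W}_p^X$, under which it is Polish), and for every $r>0$ the ball $B_{\mathcal{P}_p(X)}(r)$ is closed in $(\mathcal{P}(X), d_{\mathcal{P}(X)})$, hence complete; since $K(\tau)$ is bounded, all relevant orbits eventually lie in such a ball. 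With this understood, the hypotheses of \cite{wang12} are met and the existence and uniqueness of $\mathcal{A}\in\mathcal{D}$ follow. I do not expect any genuinely hard step in proving Proposition~\ref{exatt} itself: the substantive work of the paper is the verification of these four hypotheses for the concrete lattice system \eqref{ob-1}, not the abstract implication.
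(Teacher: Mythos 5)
Your proposal is correct and takes essentially the same route as the paper: the paper proves Proposition \ref{exatt} simply by invoking Theorem 2.23 (Proposition 3.6) and Theorem 2.25 of \cite{wang12} for the metric space $(\mathcal{P}_p(X), d_{\mathcal{P}(X)})$, which is exactly your reduction. Your additional sketch of the $\omega$-limit-set construction and the remark on handling the non-completeness of $(\mathcal{P}_p(X), d_{\mathcal{P}(X)})$ via the closed balls $B_{\mathcal{P}_p(X)}(r)$ is sound and goes beyond what the paper records.
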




For the nonlinear mappings $f_i$, $\sigma_i$ and $g_i$ $(i \in \mathbb{Z})$, appearing in equation \eqref{ob-1}, we assume that

\begin{itemize}
\item[\bf{(H1)}] \label{H1}
$f_i: \mathbb{R} \times \mathbb{R} \times \mathbb{R} \times \mathcal{P}_2(\mathbb{R}) \rightarrow \mathbb{R}$ is continuous,
there exist a $\psi(\cdot)=(\psi_i(\cdot))_{i\in \mathbb{Z}} \in L_{loc}^{\infty}(\mathbb{R}, \ell^2)$ such that
for any $t, u, v_1, v_2 \in \mathbb{R}$ and $\nu_1, \nu_2 \in \mathcal{P}_2(\mathbb{R})$,
\begin{eqnarray} \label{fi-Lip}
\begin{split}
|f_i(t,u,v_1,\nu_1)-f_i(t,u,v_2,\nu_2)|  \le &\; \psi_i(t)(|v_1-v_2|+\mathbb{W}_2^{\mathbb{R}}(\nu_1,\nu_2))
\end{split}
\end{eqnarray}
and
\begin{eqnarray}\label{fi-Lip-1}
\varphi(\cdot)=(\varphi_i(\cdot))_{i\in \mathbb{Z}} :=(f_i(\cdot,0,0,\delta_0))_{i\in \mathbb{Z}}\in L_{loc}^\infty(\mathbb{R}, \ell^2).
\end{eqnarray}
For all $t, u, v \in \mathbb{R}, \mu \in \mathcal{P}_2(\mathbb{R})$, 
\begin{eqnarray}\label{fi-1}
f_i(t,u,v,\mu)u \!\!\!&\leq&\!\!\! -\alpha|u|^p+\eta_i(t)(1+|u|^2+|v|^2+\mathbb{W}_2^{\mathbb{R}}(\mu,\delta_0)^2)
\end{eqnarray}
and
\begin{eqnarray}\label{fi-3}
\frac{\partial}{\partial u} f_i(t,u,v,\mu) \!\!\!&\leq&\!\!\! \Theta_i(t),
\end{eqnarray}
where $p\geq 2$,  $\alpha$ is a positive constant, $\eta(\cdot)=(\eta_i(\cdot))_{i\in \mathbb{Z}}\in
L^{\infty}(\mathbb{R},\ell^2)\cap L_{loc}(\mathbb{R},\ell^1)$,
$\Theta(\cdot)=(\Theta_i(\cdot))
 _{i\in \mathbb{Z}}\in L_{loc}^\infty(\mathbb{R}, \ell^{2})$,
and $\delta_0$ is the Dirac measure at point $0 \in \mathbb{R}$;

\item[\bf{(H2)}] \label{H2}
$\sigma_i: \mathbb{R} \times \mathbb{R} \times \mathbb{R} \times \mathcal{P}_2(\mathbb{R}) \rightarrow \mathbb{R}$ is continuous,
there exists a $\chi(\cdot)=(\chi_i(\cdot))_{i\in \mathbb{Z}} \in L^{\infty}(\mathbb{R}, \ell^2)$ such that for any $t, u_1,u_2, v_1, v_2 \in \mathbb{R}$ and $\nu_1, \nu_2 \in \mathcal{P}_2(\mathbb{R})$,
\begin{eqnarray} \label{segamai-Lip}
\begin{split}
|\sigma_i(t, u_1,v_1,\nu_1)-\sigma_i(t, u_2,v_2,\nu_2)|  \le \chi_i(t)(|u_1-u_2|+|v_1-v_2|+\mathbb{W}_2^\mathbb{R}(\nu_1,\nu_2)),
\end{split}
\end{eqnarray}
and
\begin{eqnarray}\label{segamai-Lip-1}
\kappa(\cdot)=(\kappa_i(\cdot))_{i\in \mathbb{Z}}:=(\sigma_i(t,0,0,\delta_0))_{i\in \mathbb{Z}}\in L^\infty(\mathbb{R}, \ell^2);
\end{eqnarray}

\item[\bf{(H3)}] \label{H3}
$g(\cdot)=(g_i(\cdot))_{i\in \mathbb{Z}} \in  L^2_{loc}(\mathbb{R}, \ell^2)$.
\end{itemize}
\vskip0.1in

By a simple observation,  we find that \eqref{fi-Lip}, \eqref{fi-Lip-1} and \eqref{fi-3} imply that for $t,u,v \in \mathbb{R}$ and
$\mu \in \mathcal{P}_2(\mathbb{R})$, there exists a $\gamma(\cdot)=(\gamma_i(\cdot))_{i\in \mathbb{Z}}\in L_{loc}^{\infty}(\mathbb{R}, \ell^2)$ such that for each $i \in \mathbb{Z}$,
\begin{eqnarray} \label{fi-2}
|f_i(t,u,v,\mu)|\!\!\!&\leq&\!\!\! \gamma_i(t)(1+|u|+|v|+\mathbb{W}_2^\mathbb{R}(\mu,\delta_0)).
\end{eqnarray}
Moreover, \eqref{segamai-Lip} and \eqref{segamai-Lip-1} suggest that
\begin{eqnarray} \label{segamai-1}
\begin{split}
|\sigma_i(t, u,v,\mu)|  \le \chi_i(t)(|u|+|v|+\mathbb{W}_2^\mathbb{R}(\mu,\delta_0))+\kappa_i(t).
\end{split}
\end{eqnarray}

Next, we introduce the space
\begin{eqnarray*}
\mathcal{L}^2:=\{(\mu_i)_{i \in \mathbb{Z}}: \mu_i \in \mathcal{P}_2(\mathbb{R}) \ \mbox{for each}\ i \in \mathbb{Z}\ \mbox{and}\
\sum_{i \in \mathbb{Z}}\mathbb{W}_2^{\mathbb{R}}(\mu_i, \delta_0)^2<\infty\},
\end{eqnarray*}
which is equipped with the metric
\begin{eqnarray*}
\rho(\mu,\varpi):= \left\{  \sum_{i \in \mathbb{Z}}\mathbb{W}_2^{\mathbb{R}}(\mu_i,\varpi_i)^2  \right\}^{\frac{1}{2} }
\end{eqnarray*}
for $\mu=(\mu_i)_{i \in \mathbb{Z}}$, $\varpi=(\varpi_i)_{i \in \mathbb{Z}} \in \mathcal{L}^2$.
We have the following.
\begin{lm}
$(\mathcal{L}^2, \rho)$ is a complete metric space.
\end{lm}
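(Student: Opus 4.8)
The statement is that $(\mathcal{L}^2, \rho)$ is a complete metric space. This requires (a) verifying that $\rho$ is a genuine metric on $\mathcal{L}^2$, and (b) showing every $\rho$-Cauchy sequence converges in $\mathcal{L}^2$. Since the hard part is completeness, let me focus there.

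Key steps:
1. **$\rho$ is a metric**: symmetry is obvious. Non-negativity/definiteness follows from each $\mathbb{W}_2^{\mathbb{R}}$ being a metric on $\mathcal{P}_2(\mathbb{R})$ (so $\rho(\mu,\varpi)=0$ iff $\mathbb{W}_2^{\mathbb{R}}(\mu_i,\varpi_i)=0$ for all $i$ iff $\mu_i = \varpi_i$ for all $i$). Finiteness: for $\mu, \varpi \in \mathcal{L}^2$, use the triangle inequality $\mathbb{W}_2^{\mathbb{R}}(\mu_i,\varpi_i) \le \mathbb{W}_2^{\mathbb{R}}(\mu_i,\delta_0) + \mathbb{W}_2^{\mathbb{R}}(\varpi_i,\delta_0)$ and then $(a+b)^2 \le 2a^2+2b^2$ to sum over $i$. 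Triangle inequality for $\rho$ itself: this is the Minkowski inequality in $\ell^2$ applied to the sequences $(\mathbb{W}_2^{\mathbb{R}}(\mu_i,\varpi_i))_i$ and $(\mathbb{W}_2^{\mathbb{R}}(\varpi_i,\vartheta_i))_i$, combined with the pointwise triangle inequality for $\mathbb{W}_2^{\mathbb{R}}$.

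2. **Completeness**: Let $(\mu^{(n)})_{n\ge 1}$ be Cauchy in $(\mathcal{L}^2,\rho)$, where $\mu^{(n)} = (\mu_i^{(n)})_{i\in\mathbb{Z}}$. For each fixed $i$, $\mathbb{W}_2^{\mathbb{R}}(\mu_i^{(n)}, \mu_i^{(m)}) \le \rho(\mu^{(n)},\mu^{(m)})$, so $(\mu_i^{(n)})_n$ is Cauchy in the Polish space $(\mathcal{P}_2(\mathbb{R}),\mathbb{W}_2^{\mathbb{R}})$; let $\mu_i$ be its limit. Set $\mu = (\mu_i)_{i\in\mathbb{Z}}$. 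I must show $\mu \in \mathcal{L}^2$ and $\rho(\mu^{(n)},\mu) \to 0$. The standard argument: fix $\varepsilon>0$, pick $N$ so that $\rho(\mu^{(n)},\mu^{(m)}) < \varepsilon$ for $n,m\ge N$; then for any finite subset $F \subset \mathbb{Z}$, $\sum_{i\in F} \mathbb{W}_2^{\mathbb{R}}(\mu_i^{(n)},\mu_i^{(m)})^2 < \varepsilon^2$, and letting $m\to\infty$ (finite sum, so limits pass through) gives $\sum_{i\in F}\mathbb{W}_2^{\mathbb{R}}(\mu_i^{(n)},\mu_i)^2 \le \varepsilon^2$; since $F$ is arbitrary, $\sum_{i\in\mathbb{Z}}\mathbb{W}_2^{\mathbb{R}}(\mu_i^{(n)},\mu_i)^2 \le \varepsilon^2$ for all $n\ge N$. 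This simultaneously shows $\mu^{(n)} - \mu$ is "in $\ell^2$" (hence $\mu \in \mathcal{L}^2$, using that $\mu^{(N)} \in \mathcal{L}^2$ plus the triangle inequality again) and that $\rho(\mu^{(n)},\mu)\le\varepsilon$ for $n\ge N$, i.e. convergence.

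The main obstacle is purely bookkeeping: ensuring that the limit object $\mu$ actually lies in $\mathcal{L}^2$ (i.e. $\sum_i \mathbb{W}_2^{\mathbb{R}}(\mu_i,\delta_0)^2 < \infty$), which I get by writing $\mathbb{W}_2^{\mathbb{R}}(\mu_i,\delta_0) \le \mathbb{W}_2^{\mathbb{R}}(\mu_i,\mu_i^{(N)}) + \mathbb{W}_2^{\mathbb{R}}(\mu_i^{(N)},\delta_0)$, squaring, summing, and using that both pieces are summable (the first by the estimate above, the second since $\mu^{(N)}\in\mathcal{L}^2$). There is no analytic subtlety beyond the completeness of $(\mathcal{P}_2(\mathbb{R}),\mathbb{W}_2^{\mathbb{R}})$, which is classical; the proof is the routine "$\ell^2$-valued completeness" template adapted to Wasserstein components rather than real components. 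I would keep the write-up short, citing the metric-space axioms for $\mathbb{W}_2^{\mathbb{R}}$ and Minkowski's inequality, and spend most of the space on the three-line Cauchy-to-convergent argument.
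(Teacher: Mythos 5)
Your proposal is correct and follows essentially the same route as the paper: verify the metric axioms (Minkowski for the triangle inequality), extract componentwise limits using completeness of $(\mathcal{P}_2(\mathbb{R}),\mathbb{W}_2^{\mathbb{R}})$, pass to the limit over finite index sets and then let the index set exhaust $\mathbb{Z}$, and finally place the limit in $\mathcal{L}^2$ via the triangle inequality against a fixed $\mu^{(N)}$. The only cosmetic difference is that you pass $m\to\infty$ directly through the finite sum (obtaining the bound $\varepsilon^2$), whereas the paper first splits with $(a+b)^2\le 2a^2+2b^2$ and gets $2\varepsilon^2$; this changes nothing of substance.
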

\begin{proof}
We first claim that $\rho$ is a metric on $\mathcal{L}^2$. Clearly,  $\rho(\mu,\varpi) \ge 0$ for any
$\mu=(\mu_i)_{i \in \mathbb{Z}}$, $\varpi=(\varpi_i)_{i \in \mathbb{Z}} \in \mathcal{L}^2$,
and $\rho(\mu,\varpi)=0$ if and only if $\mathbb{W}_2^{\mathbb{R}}(\mu_i,\varpi_i)=0$ for all $i \in \mathbb{Z}$.
Since $(\mathcal{P}_2(\mathbb{R}), \mathbb{W}_2^{\mathbb{R}})$
is a Polish space, $\mathbb{W}_2^{\mathbb{R}}(\mu_i, \varpi_i)=0$ for all $i \in \mathbb{Z}$ if and only if $\mu_i=\varpi_i$ for all $i \in \mathbb{Z}$.
Therefore, $\rho(\mu,\varpi)=0$ if and only if $\mu=\varpi$.

The symmetry  is also evident. To prove the triangle inequality, for all $\mu=(\mu_i)_{i \in \mathbb{Z}}$,
$\varpi=(\varpi_i)_{i \in \mathbb{Z}}$, $\upsilon=(\upsilon_i)_{i \in \mathbb{Z}}$, utilizing the Minkowski inequality, we obtain that
\begin{eqnarray*}
\rho(\mu,\varpi) \!\!\!& \le &\!\!\! \left\{\sum_{i \in \mathbb{Z}}[\mathbb{W}_2^{\mathbb{R}}(\mu_i, \varpi_i)+\mathbb{W}_2^{\mathbb{R}}(\varpi_i,\upsilon_i)]^2 \right\}^{\frac{1}{2}}
\\
\!\!\!& \le &\!\!\!
\left\{\sum_{i \in \mathbb{Z}}\mathbb{W}_2^{\mathbb{R}}(\mu_i, \varpi_i)^2  \right\}^{\frac{1}{2}}
+\left\{\sum_{i \in \mathbb{Z}}\mathbb{W}_2^{\mathbb{R}}(\varpi_i,\upsilon_i)^2 \right\}^{\frac{1}{2}}
\\
\!\!\!& = &\!\!\! \rho(\mu,\varpi)+\rho(\varpi,\upsilon).
\end{eqnarray*}
Therefore, our assertion holds. It remains to prove the completeness.

Let $\{\mu_n\}$ be a Cauchy sequence of $(\mathcal{L}^2, \rho)$ with $\mu_n=(\mu_{n,i})_{i \in \mathbb{Z}}$.
Then for any $\epsilon>0$, there exists a constant $N>0$
such that for any $m, n \ge N$,
\begin{eqnarray*}
\rho(\mu_n,\mu_m)=\left\{\sum_{i \in \mathbb{Z}} \mathbb{W}_2^{\mathbb{R}}(\mu_{n,i}, \mu_{m,i})^2 \right\}^{\frac{1}{2}}<\epsilon,
\end{eqnarray*}
which implies that for all $i \in \mathbb{Z}$,
\begin{eqnarray*}
\mathbb{W}_2^{\mathbb{R}}(\mu_{n,i}, \mu_{m,i})<\epsilon.
\end{eqnarray*}
Thus for each fixed $i \in \mathbb{Z}$, $\{\mu_{n,i}\}_{n=1}^{\infty}$ is a Cauchy sequence in $\mathcal{P}_2(\mathbb{R})$.
Since $(\mathcal{P}_2(\mathbb{R}), \mathbb{W}_2^{\mathbb{R}})$ is a complete metric space, there exists a $\mu_i \in \mathcal{P}_2(\mathbb{R})$
such that
\begin{eqnarray*}
\lim_{n\rightarrow \infty} \mathbb{W}_2^{\mathbb{R}}(\mu_{n,i}, \mu_i)=0
\end{eqnarray*}
for each $i \in \mathbb{Z}$. Set $\mu=(\mu_i)_{i \in \mathbb{Z}}$. Notice that for any $m,n \ge N$ and $k \in \mathbb{N}$,
\begin{eqnarray*}
\sum_{|i| \le k} \mathbb{W}_2^{\mathbb{R}}(\mu_{n,i}, \mu_{m,i})^2 < \epsilon^2.
\end{eqnarray*}
It then follows that for any $n\ge N$ and $k \in \mathbb{N}$,
\begin{equation}\label{lm-1}
\begin{split}
\sum_{|i| \le k} \mathbb{W}_2^{\mathbb{R}}(\mu_{n,i}, \mu_{i})^2 \le&\, 2 \sum_{|i| \le k} \mathbb{W}_2^{\mathbb{R}}(\mu_{n,i}, \mu_{m,i})^2
+ 2\sum_{|i| \le k} \mathbb{W}_2(\mu_{m,i}, \mu_{i})^2
\\
\le&\, 2\epsilon^2+2\sum_{|i| \le k} \mathbb{W}_2^{\mathbb{R}}(\mu_{m,i}, \mu_{i})^2,
\end{split}
\end{equation}
where $m \ge N$. Letting $m\rightarrow \infty$ in \eqref{lm-1}, we have for $n \ge N$ and $k \in \mathbb{N}$ that
\begin{eqnarray*}
\sum_{|i| \le k} \mathbb{W}_2^{\mathbb{R}}(\mu_{n,i}, \mu_{i})^2 \!\!\!&\le&\!\!\! 2 \epsilon^2.
\end{eqnarray*}
Then, letting $k\rightarrow \infty$, we obtain
$\rho(\mu_n, \mu) \le \sqrt{2} \epsilon$ for all $n \ge N$, which implies that
$\mu_n$ converges to $\mu$ under the metric $\rho$ as $n \rightarrow \infty$, and
\begin{eqnarray*}
\rho(\mu,\hat{\delta}_0)\!\!\!& \le &\!\!\! \rho(\mu,\mu_N)+\rho(\mu_N,\hat{\delta}_0)< \infty,
\end{eqnarray*}
where $\hat{\delta}_0=(\delta_0)_{i \in \mathbb{Z}}$. Thus, we conclude that $\mu \in \mathcal{L}^2$. This completes the proof of the lemma.
\end{proof}

For $u=(u_i)_{i \in \mathbb{Z}}\in L^2(\Omega,\ell^2)$, we denote  $\mathcal{L}_{u}:=(\mathcal{L}_{u_i})_{i\in \mathbb{Z}}$.
Clearly, $\mathcal{L}_{u} \in \mathcal{L}^2$.
Moreover, for any $u=(u_i)_{i \in \mathbb{Z}}$, $v=(v_i)_{i \in \mathbb{Z}} \in L^2(\Omega,\ell^2)$,
the following property holds
\begin{eqnarray} \label{dis-property}
\rho(\mathcal{L}_{u}, \mathcal{L}_{v})^2 \!\!\!& = &\!\!\!
\sum_{i \in \mathbb{Z}} \mathbb{W}_2^\mathbb{R}(\mathcal{L}_{u_i}, \mathcal{L}_{v_i})^2 \le
\sum_{i \in \mathbb{Z}}\mathbb{E}\|u_i-v_i\|^2= \mathbb{E}\|u-v\|^2.
\end{eqnarray}

To express the lattice system \eqref{ob-1}-\eqref{ob-2} as a differential equation in $\ell^2$, we define the following linear operators for every
$u=(u_i)_{i \in \mathbb{Z}} \in \ell^2$:
\begin{eqnarray*}
(Bu)_i:=u_{i+1}-u_i,\ \ \ \ \ \ \ \ (B^*u)_i:=u_{i-1}-u_i
\end{eqnarray*}
and
\begin{eqnarray*}
(Au)_i:=-u_{i-1}+2u_i-u_{i+1}, \quad \forall i \in \mathbb{Z}.
\end{eqnarray*}
Note that $A=BB^*=B^*B$ and
\begin{eqnarray*}
(B^*u,v)=(u,Bv)
\end{eqnarray*}
for all $u,v \in \ell^2$. Moreover, it can be seen  that $A$ is a bounded linear operator from $\ell^2$ to $\ell^2$ with
\begin{eqnarray}\label{bounded-A}
\|Au\|^2 \!\!\! &\le &\!\!\! 18\|u\|^2,\quad \forall u \in \ell^2.
\end{eqnarray}

For every $t \in \mathbb{R}$, $u=(u_i)_{i \in \mathbb{Z}}$, $v=(v_i)_{i \in \mathbb{Z}} \in \ell^2$
and $\mu=(\mu_i)_{i \in \mathbb{Z}}\in \mathcal{L}^2$, we denote
\begin{eqnarray*}
f(t,u,v,\mu)=(f_i(t,u_i, v_i, \mu_i))_{i \in \mathbb{Z}},
\end{eqnarray*}
and define the operator
$\tilde{\sigma}: \mathbb{R} \times \ell^2 \times \ell^2 \times \mathcal{L}^2 \rightarrow L(\ell^2, \ell^2)$ by
\begin{eqnarray*}
\tilde{\sigma}(t, u,v,\mu)w=(\sigma_i(t, u_i, v_i, \mu_i)w_i)_{i \in \mathbb{Z}}
\end{eqnarray*}
for every $u=(u_i)_{i \in \mathbb{Z}}$, $v=(v_i)_{i \in \mathbb{Z}}$, $w=(w_i)_{i \in \mathbb{Z}} \in \ell^2$
and $\mu=(\mu_i)_{i \in \mathbb{Z}}\in \mathcal{L}^2$.
Let $e_k \in \ell^2$ represent the element having $1$ at position $k$ and $0$ in all the other components. Then, the set $\{e_k\}_{k \in \mathbb{Z}}$
forms a complete orthogonal basis of $\ell^2$.

We denote by $L_2(\ell^2,\ell^2)$ the space of Hilbert Schmidt operators
from $\ell^2$ to $\ell^2$ which is equipped with norm
$$\|C\|_{L_2(\ell^2,\ell^2)}:=\Big(\sum_{k\in\mathbb{Z}}\|Ce_k\|^2\Big)^{\frac{1}{2}},\quad  \forall C\in L_2(\ell^2,\ell^2).
$$

 For any $t \in \mathbb{R}$,
$u_1, u_2, v_1, v_2 \in \ell^2$ and $\nu_1, \nu_2 \in \mathcal{L}^2$, by \eqref{fi-Lip}, \eqref{fi-3}
and \eqref{segamai-Lip}, we have that
\begin{equation}\label{f-Lip}
\begin{split}
 &\; \|f(t, u_1,v_1,\nu_1)-f(t, u_2,v_2,\nu_2)\|^2
\\
\le &\;
2\|\Theta(t)\|^2 \|u_1-u_2\|^2
+4 \|\psi(t)\|^2(\|v_1-v_2\|^2+\rho(\nu_1,\nu_2)^2)
\end{split}
\end{equation}
and
\begin{equation}\label{segama-Lip}
\begin{split}
&\, \|\tilde{\sigma}(t,u_1,v_1,\nu_1)-\tilde{\sigma}(t,u_2,v_2,\nu_2)\|_{L_2(\ell^2, \ell^2)}^2
\\
= &\, \sum_{k\in \mathbb{Z}} \|[\tilde{\sigma}(t,u_1,v_1,\nu_1)-\tilde{\sigma}(t,u_2,v_2,\nu_2)]e_k\|^2
\\
\le &\, 3 \|\chi(t)\|^2(\|u_1-u_2\|^2+\|v_1-v_2\|^2+\rho(\nu_1,\nu_2)^2).
\end{split}
\end{equation}
For all $t\in \mathbb{R},
u,v\in\ell^2$ and $\mu \in \mathcal{L}^2$, using \eqref{fi-2} and \eqref{segamai-1}, we have that
\begin{eqnarray}\label{bound-1}
\|f(t, u,v,\mu)\|^2 \!\!\!& \le &\!\!\! 4\|\gamma(t)\|^2(1+\|u\|^2+\|v\|^2+\rho(\mu,\hat{\delta}_0)^2)
\end{eqnarray}
and
\begin{equation}\label{bound-2}
\begin{split}
\|\tilde{\sigma}(t,u,v,\mu)\|_{L_2(\ell^2,\ell^2)}^2
= &\, \sum_{k \in \mathbb{Z}}\|\sigma_k(t, u_k,v_k,\mu_k)\|^2
\\
\le &\, 6\|\chi(t)\|^2(\|u\|^2+\|v\|^2+\rho(\mu,\hat{\delta}_{0})^2)+2\|\kappa(t)\|^2.
\end{split}
\end{equation}

With the aforementioned preparations, we can reformulate \eqref{ob-1}-\eqref{ob-2} as follows:
\begin{equation}\label{ob-3}
\begin{split}
& du(t)+\nu A u(t) dt+\lambda u(t) dt
\\
&= (f(t, u(t),u(t-r), \mathcal{L}_{u(t)})+g(t)) dt+\tilde{\sigma}(t, u(t),u(t-r), \mathcal{L}_{u(t)})dW(t),\quad t >\tau
\end{split}
\end{equation}
with initial condition
\begin{eqnarray} \label{ob-4}
u_\tau(s)=\zeta(s),\quad  \forall s\in [-r,0],
\end{eqnarray}
where $W$ is a cylindrical $Q$-Wiener process with $Q=I$ in $\ell^2$ on a probability space $(\Omega, \mathcal{F}, \mathbb{P})$.
Let $\mathcal{N}$ denote the collection of null sets in the completion of $\mathcal{F}$. We choose the normal filtration
\begin{equation*}
\mathcal{F}_{t}:=\sigma\{W(u): \tau \le u \leq t\}\vee \mathcal{N}, \quad  \forall t \ge \tau.
\end{equation*}

We denote by $\mathcal{C}_r:=C([-r,0], \ell^2)$ the Banach space of all continuous functions
$\varphi$ mapping from $[-r,0]$ to $\ell^2$, equipped with norm
$\|\varphi\|_{\mathcal{C}_r}:=\sup_{-r \le \theta \le 0}\|\varphi(\theta)\|$.
Notably,  $u_t$ represents  an element of $\mathcal{C}_r$ such that $u_t(s)=u(t+s)$ for $t+s>\tau$,
and $u_t(s)=\zeta(t+s-\tau)$ for $\tau-r \le t+s \le \tau$. By the way, for simplicity, in the following, we use the notation
$\mathbb{W}_p(\cdot, \cdot)$ instead of $\mathbb{W}_p^{\mathcal{C}_r}(\cdot, \cdot)$, which refers to the Wasserstein distance on $\mathcal{P}_p(\mathcal{C}_r)$.

The strong solution of the system \eqref{ob-3}-\eqref{ob-4} is understood in the following sense.
\begin{df}\label{df-solu}
Suppose that $\zeta \in L^2(\Omega, \mathcal{C}_r)$ is $\mathcal{F}_\tau$-measurable.
A continuous $\ell^2$-valued stochastic process $u(t)$ with $t \in [\tau-r,\infty)$ is called a strong solution of
\eqref{ob-3} with initial data \eqref{ob-4} if $(u_t)_{t \ge \tau}$ is $\mathcal{F}_t$-adapted, $u_\tau=\zeta$, $u \in L^2(\Omega, C([\tau-r,T],\ell^2))$ for any $T>\tau-r$ and for each $\tau \le t \le T$, $u(t)$ satisfies {\small
\begin{equation} \label{inte-ob-eq}
\begin{split}
&\, u(t)+\nu \int_\tau^t A u(s) ds+\lambda \int_\tau^t u(s) ds
\\
=&\, \zeta(0)+ \int_\tau^t [f(s, u(s), u(s-r), \mathcal{L}_{u(s)})+g(s) ]ds + \int_\tau^t \tilde{\sigma}(s, u(s), u(s-r), \mathcal{L}_{u(s)})dW(s), \quad
\mathbb{P}-a.s.
\end{split}
\end{equation}}
\end{df}
\begin{rmk}
Here, we briefly mention that there exists
$\mathcal{L}_{\zeta(\cdot)} \in C([-r,0],\mathcal{C}_r)$ as long as one assumes that $\zeta \in L^2(\Omega, \mathcal{C}_r)$.
In fact, for any $\zeta \in L^2(\Omega,\mathcal{C}_r)$, we have $\zeta(t) \in L^2(\Omega, \ell^2)$ for all $t \in [-r,0]$. Thus,
$\mathcal{L}_{\zeta(t)} \in \mathcal{L}^2$ for every $t \in [-r,0]$.
Moreover, for any fixed $s \in [-r,0]$,
we have that $\lim_{t\rightarrow s} \|\zeta(t)-\zeta(s)\|^2=0$ for each $\omega \in \Omega$ (Here as $s=-r$ or $0$, $t\rightarrow s$ indicates only a unilateral limit). By \eqref{dis-property}, we observe that
\begin{eqnarray*}
\rho(\mathcal{L}_{\zeta(t)},\mathcal{L}_{\zeta(s)})^2\!\!\!&\le&\!\!\! \mathbb{E}\|\zeta(t)-\zeta(s)\|^2.
\end{eqnarray*}
In view of $\mathbb{E} \|\zeta(t)\|^2 \le \mathbb{E}(\|\zeta\|_{\mathcal{C}_r}^2)<\infty$ for all $t \in [-r,0]$, by using the dominated convergence theorem, we obtain that
\begin{eqnarray*}
\lim_{t \rightarrow s} \rho(\mathcal{L}_{\zeta(t)},\mathcal{L}_{\zeta(s)})^2=0.
\end{eqnarray*}
Hence $\mathcal{L}_{\zeta(\cdot)} \in C([-r,0],\mathcal{L}^2)$ and our assertion is valid. Similarly, $u \in L^2(\Omega, C([\tau-r,T],\ell^2))$
also implies that $\mathcal{L}_{u(\cdot)} \in C([\tau-r,T],\mathcal{L}^2)$.
 \end{rmk}

For any $\zeta \in L^2(\Omega, \mathcal{C}_r)$,
we denote the law of $\zeta$ on $\mathcal{C}_r$ by $\mathcal{L}_\zeta=\zeta_*(\mathbb{P})=\mathbb{P}\circ \zeta^{-1}$.

The weak solution of system \eqref{ob-3} will be understood in the following sense.
\begin{df} \label{df1}
For any $\mu_\tau \in \mathcal{P}_2(\mathcal{C}_r)$, $(\{\tilde{u}(t)\}_{t \ge \tau-r}, \{\tilde{W}(t)\}_{t \ge \tau})$
is called a weak solution to \eqref{ob-3} starting at $\mu_\tau$, if $\{\tilde{W}(t)\}_{t \ge \tau}$
is a cylindrical $Q$-Wiener process with $Q=I$ under a stochastic basis
$(\tilde{\Omega}, \tilde{\mathcal{F}}, \{\tilde{\mathcal{F}_t}\}_{t \ge \tau}, \tilde{\mathbb{P}})$
(i.e. a complete probability space with normal filtration),
and $\{\tilde{u}(t)\}_{t \ge \tau}$ is a $\mathcal{F}_t$-adapted $\ell^2$-valued stochastic process with $\mathcal{L}_{\tilde{u}_\tau}|_{\tilde{\mathbb{P}}}=\mu_\tau$,
$\tilde{u} \in L^2(\Omega, C([\tau-r,T],\ell^2))$ for any $T>\tau-r$ and for each $\tau \le t \le T$,
$\tilde{u}(t)$ satisfies
\begin{equation} \label{inte-ob-eq-weak}
\begin{split}
&\, \tilde{u}(t)+\nu \int_\tau^t A \tilde{u}(s) ds+\lambda \int_\tau^t \tilde{u}(s) ds
\\
=&\, \tilde{u}(\tau)+ \int_\tau^t f(s,\tilde{u}(s), \tilde{u}(s-r), \mathcal{L}_{\tilde{u}(s)})ds + \int_\tau^t \tilde{\sigma}
(s,\tilde{u}(s), \tilde{u}(s-r), \mathcal{L}_{\tilde{u}(s)})d\tilde{W}(s), \tilde{\mathbb{P}}-a.s.
\end{split}
\end{equation}
\end{df}

From now on, we assume that the coefficient $\lambda$ in \eqref{ob-3} is sufficiently
large such that
\begin{eqnarray} \label{lamda}
\lambda> 8\|\eta\|_{L^{\infty}(\mathbb{R},\ell^2)}(5+e^{2\varepsilon r})
+\left[24\|\chi\|^2_{L^{\infty}(\mathbb{R},\ell^2)}(5+e^{2 \varepsilon r})
+16\|\kappa\|^2_{L^{\infty}(\mathbb{R},\ell^2)} \right](3+8c_1^2),
\end{eqnarray}
where $c_1$ follows from the BDG inequality (see Proposition \ref{Pro1}). It follows from \eqref{lamda} that there exists a sufficiently small $\varepsilon \in (0,1)$
such that
\begin{eqnarray}\label{lamda-1}
\qquad \lambda-{4\varepsilon}>8\|\eta\|_{L^{\infty}(\mathbb{R},\ell^2)}(5+e^{2\varepsilon r})
+\left[24\|\chi\|^2_{L^{\infty}(\mathbb{R},\ell^2)}(5+e^{2 \varepsilon r})
+16\|\kappa\|^2_{L^{\infty}(\mathbb{R},\ell^2)}\right](3+8c_1^2).
\end{eqnarray}

Let $D_1 = \{  D(\tau):\tau\in\mathbb{R}, \ D(\tau) \ \text{is a bounded nonempty subset of} \ \mathcal{P}_2 (\mathcal{C}_r)\}$,
and
$ D_2 = \{  D(\tau): \tau\in\mathbb{R}, D(\tau) \ \text{is a bounded nonempty subset of  } \ \mathcal{P}_4 (\mathcal{C}_r)\}$.
Denote by $ \mathcal{D}_0 $ the collection of all such  families  $D_1$  which further satisfy:
$$ \lim\limits_{ \tau \rightarrow - \infty } e^{\varepsilon\tau } \| D ( \tau )  \|_{ \mathcal{P}_2 (\mathcal{C}_r) }^2=0,$$
where $\varepsilon$ is the same number as in \eqref{lamda-1}. Similarly, denote by
$\mathcal{D} $ the collection of all such families $D_2$   which further satisfy:
$$\lim\limits_{ \tau \rightarrow - \infty }e^{ 2\varepsilon  \tau }\| D ( \tau )  \|_{ \mathcal{P}_4 (\mathcal{C}_r) }^4=0.$$
It is evident
$ \mathcal{D} \subseteq   \mathcal{D}_0 $.

We also assume  the following condition in order to derive uniform estimates of solutions:
\begin{equation}\label{g^2}
\int_ {-\infty}^{\tau} e^{\varepsilon (s-\tau)}(\|g(s)\|^2+\|\eta(s)\|_1) ds<\infty
\end{equation}
and
\begin{equation}\label{g^4}
\int_{-\infty}^\tau e^{2 \varepsilon  (s-\tau)} (\|g(s)\|^4+\|\eta(s)\|_1^2) ds<\infty, \ \ \forall \tau \in \mathbb{R}.
\end{equation}

\section{the well-posedness of solutions}

In this section, we will establish the well-posedness of solutions (including both strong and weak solutions) to the non-autonomous distribution-dependent stochastic delay lattice system \eqref{ob-3}-\eqref{ob-4}.

We begin by proving the existence and uniqueness of strong solutions for the stochastic delay lattice system \eqref{ob-3}-\eqref{ob-4} in the following proposition.
\begin{pro} \label{Pro1}
Let $\zeta \in L^2(\Omega, \mathcal{C}_r)$ be an $\mathcal{F}_r$-measurable random variable.
If {\bf (H1)}-{\bf (H3)} hold, then system \eqref{ob-3}-\eqref{ob-4}
has a unique strong solution $u$ in $L^2(\Omega, C([\tau-r,T],\ell^2))$ for any $T>\tau-r$ with the property
$\mathcal{L}_{u_t} \in \mathcal{P}_2(\mathcal{C}_r)$ for all $\tau \le t \le T$. Uniqueness, in this context, means that any other solution $v$ of system \eqref{ob-3}-\eqref{ob-4} is indistinguishable from $u$, i.e.,
\begin{eqnarray*}
\mathbb{P}(\{u(t)= v(t), \forall t \in [\tau-r,T]\})=1.
\end{eqnarray*}
\end{pro}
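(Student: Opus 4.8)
The plan is to use the classical ``freeze the law'' device for McKean--Vlasov equations together with the method of steps for the delay, thereby reducing the problem to the well-posedness of stochastic evolution equations on $\ell^2$ whose linear part $\nu A+\lambda I$ is bounded and positive, whose drift is only one-sided Lipschitz in the state, and whose diffusion is globally Lipschitz. \emph{Step 1 (the frozen equation).} Fix $T>\tau$ and a deterministic flow $\mu\in C([\tau,T],\mathcal{L}^2)$, and consider the distribution-independent delay equation obtained from \eqref{ob-3} by substituting $\mu(t)$ for $\mathcal{L}_{u(t)}$, with initial datum $\zeta$. On $[\tau,\tau+r]$ the retarded term equals the known $\mathcal{F}_t$-adapted process $\zeta(\cdot-r-\tau)$, so this is an It\^o equation on $\ell^2$ whose drift $u\mapsto-\nu Au-\lambda u+f(t,u,\zeta(t-r-\tau),\mu(t))+g(t)$ is continuous, of linear growth by \eqref{bound-1}, and one-sided Lipschitz in $u$ (since $\langle Au,u\rangle\ge0$ because $A=B^*B$, and $\langle f(t,u,v,\nu)-f(t,u',v,\nu),u-u'\rangle\le\|\Theta(t)\|\,\|u-u'\|^2$ by \eqref{fi-3}), and whose diffusion is globally Lipschitz in $u$ with locally bounded constant by \eqref{segama-Lip}. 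The standard theory of monotone stochastic evolution equations (Galerkin truncation in $\ell^2$, uniform a priori bounds, and a Minty-type passage to the limit; the lattice structure makes the truncation natural) then yields a unique strong solution on $[\tau,\tau+r]$ in $L^2(\Omega,C([\tau,\tau+r],\ell^2))$, and iterating over the intervals $[\tau+kr,\tau+(k+1)r]$ until $T$ is covered produces a unique strong solution $u^{\mu}\in L^2(\Omega,C([\tau-r,T],\ell^2))$ of the frozen system. Applying It\^o to $\|u^{\mu}(t)\|^2$, discarding the favorable term $-\alpha\|u^{\mu}\|_p^p$ from \eqref{fi-1}, and combining the BDG inequality (with constant $c_1$) with Gronwall gives $\mathbb{E}\big(\sup_{\tau-r\le t\le T}\|u^{\mu}(t)\|^2\big)<\infty$, hence $\mathcal{L}_{u^{\mu}_t}\in\mathcal{P}_2(\mathcal{C}_r)$ for all $t$.

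\emph{Step 2 (the self-consistency fixed point).} Equip $C([\tau,T],\mathcal{L}^2)$ with $d_{\beta}(\mu,\varpi):=\sup_{\tau\le t\le T}e^{-\beta t}\rho(\mu(t),\varpi(t))$, which is a complete metric (it is equivalent to the uniform metric on the compact interval $[\tau,T]$ and $(\mathcal{L}^2,\rho)$ is complete), and set $\Psi(\mu)(t):=\mathcal{L}_{u^{\mu}(t)}$. That $\Psi(\mu)\in C([\tau,T],\mathcal{L}^2)$ is the continuity observation of the Remark following Definition \ref{df-solu}, obtained from $u^{\mu}\in L^2(\Omega,C([\tau-r,T],\ell^2))$ and \eqref{dis-property}. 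For two flows $\mu^1,\mu^2$, let $u^1,u^2$ be the associated frozen solutions driven by the \emph{same} $W$ and $\zeta$; It\^o on $\|u^1(t)-u^2(t)\|^2$, using $\langle A(u^1-u^2),u^1-u^2\rangle\ge0$, the one-sided bound \eqref{fi-3} for the $u$-difference of $f$ together with the Lipschitz bound \eqref{fi-Lip} in the remaining arguments, the Lipschitz bound \eqref{segama-Lip} for $\tilde\sigma$, and \eqref{dis-property}, and taking expectations (after the usual localization), gives $\phi(t):=\mathbb{E}\|u^1(t)-u^2(t)\|^2\le C\int_{\tau}^{t}\big[\phi(s)+\phi(s-r)+\rho(\mu^1(s),\mu^2(s))^2\big]\,ds$, where $\phi(s-r)$ is read as $0$ for $s\in[\tau,\tau+r]$ and $C=C(T)$ depends only on $T$ and the $L^\infty([\tau,T];\ell^2)$-bounds of $\Theta,\psi,\chi$. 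Multiplying by $e^{-\beta t}$ and noting the delay shift contributes only a harmless factor $e^{-\beta r}\le1$ yields $\sup_t e^{-\beta t}\phi(t)\le\frac{C}{\beta}\big(2\sup_t e^{-\beta t}\phi(t)+d_{\beta}(\mu^1,\mu^2)^2\big)$; fixing $\beta$ large and using $\rho(\Psi(\mu^1)(t),\Psi(\mu^2)(t))^2\le\phi(t)$ (again \eqref{dis-property}) gives $d_{\beta}(\Psi(\mu^1),\Psi(\mu^2))\le\theta\,d_{\beta}(\mu^1,\mu^2)$ with $\theta\in(0,1)$. By the Banach fixed point theorem $\Psi$ has a unique fixed point $\mu^{*}$, and $u:=u^{\mu^{*}}$ is a strong solution of \eqref{ob-3}--\eqref{ob-4} on $[\tau-r,T]$ with all the asserted properties; as $T>\tau$ was arbitrary these patch consistently into a solution on $[\tau-r,\infty)$.

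\emph{Step 3 (uniqueness).} If $u$ and $v$ are two strong solutions on the same stochastic basis with the same $\zeta$, then $u$, resp. $v$, solves the frozen equation associated to the flow $t\mapsto\mathcal{L}_{u(t)}$, resp. $t\mapsto\mathcal{L}_{v(t)}$; repeating the estimate of Step 2 but now bounding the source term $\rho(\mathcal{L}_{u(s)},\mathcal{L}_{v(s)})^2$ by $\mathbb{E}\|u(s)-v(s)\|^2=\phi(s)$ via \eqref{dis-property} gives $\phi(t)\le C\int_{\tau}^{t}[\phi(s)+\phi(s-r)]\,ds$. On $[\tau,\tau+r]$ the retarded term vanishes ($u(s-r)=v(s-r)=\zeta(s-r-\tau)$), so Gronwall forces $\phi\equiv0$ there, and an induction over the intervals $[\tau+kr,\tau+(k+1)r]$ propagates $\phi\equiv0$ to all of $[\tau-r,T]$. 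Hence $u(t)=v(t)$ a.s.\ for every $t$, and by continuity of both paths $\mathbb{P}(u(t)=v(t),\ \forall t\in[\tau-r,T])=1$.

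\emph{Main obstacle.} The genuinely delicate point is not the measure fixed point but the well-posedness of the frozen equation: the drift is only one-sided Lipschitz in the state (the estimate \eqref{fi-3} is one-directional and \eqref{fi-1} permits the superlinear dissipation $-\alpha|u|^p$), so a direct Picard iteration in the state variable is unavailable and one must run a monotonicity/Galerkin argument, all while checking that the $L^2$-energy a priori estimates close up despite the distribution dependence — which is precisely where the inequality $\rho(\mathcal{L}_u,\mathcal{L}_v)^2\le\mathbb{E}\|u-v\|^2$ of \eqref{dis-property} is invoked repeatedly to absorb the Wasserstein terms back into the $\ell^2$ energy. The delay itself is handled throughout by the method of steps, since $r$ is a fixed constant and the retarded term is known data on $[\tau,\tau+r]$, so no genuinely new difficulty arises from it beyond bookkeeping.
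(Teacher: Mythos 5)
Your proposal is correct, but it follows a genuinely different route from the paper. The paper runs a single Picard iteration directly on the coupled state-and-law system: it defines $u^{(n)}$ by inserting $u^{(n-1)}$ \emph{and} $\mathcal{L}_{u^{(n-1)}}$ into the right-hand side, proves the factorial bound $\mathbb{E}\sup\|u^{(n+1)}-u^{(n)}\|^2\le M[N(t-\tau)]^n/n!$ using the packaged Lipschitz estimates \eqref{f-Lip} and \eqref{segama-Lip} together with \eqref{dis-property}, and concludes by Borel--Cantelli and a Gronwall uniqueness argument; no splitting into a frozen equation and a measure fixed point ever occurs, and no method of steps is needed because the delayed argument is simply carried along inside the iteration. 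You instead use the classical Sznitman-type decoupling: solve the law-frozen delay SDE by the method of steps plus monotone-coefficient theory, then contract on measure flows in a weighted metric. Each approach has a trade-off. The paper's iteration is elementary and self-contained, but it leans on \eqref{f-Lip}, i.e.\ on reading the one-sided derivative bound \eqref{fi-3} as a two-sided Lipschitz bound in $u$ with constant $\|\Theta(t)\|$; your monotonicity argument uses \eqref{fi-3} only in its literal one-sided form, which is more faithful to the stated hypotheses, at the price of invoking the Galerkin/Minty machinery for the frozen equation and adding the extra fixed-point layer. Two small bookkeeping points you should repair in a full write-up: (i) your weighted metric should be normalized as $e^{-\beta(t-\tau)}$ (otherwise the factor is not $\le 1$ for $\tau<0$) and the exponents must be kept consistent between $\rho$ and $\rho^2$ (contract $\sup_t e^{-2\beta(t-\tau)}\phi(t)$ against $d_\beta^2$, not $e^{-\beta t}\phi(t)$ against $d_\beta^2$); (ii) the hemicontinuity of $u\mapsto f(t,u,v,\mu)$ as a map $\ell^2\to\ell^2$, needed for the Minty argument, deserves a line (it follows from componentwise continuity, \eqref{fi-2} and uniform smallness of tails). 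Neither point is a genuine gap.
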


\begin{proof}
The proof is divided into three steps.

{\bf Step 1.} First, we prove the existence. For all $\tau \le t \le T$, let
$u^{(0)}(t)=\zeta(0)$, $\mathcal{L}_{u^{(0)}(t)}=\mathcal{L}_{\zeta(0)}$ and take $u_\tau^{(0)}=\zeta$.
For any $n \in \mathbb{N}^+$, let $u^{(n)}(t)$ solve the following stochastic integral equation
\begin{equation} \label{pro1-1+}
\begin{split}
&\, u^{(n)}(t)+\nu A \int_\tau^t u^{(n-1)}(s) ds+\lambda \int_\tau^t u^{(n-1)}(s) ds
\\
=&\,\zeta(0)+\int_\tau^t f(s, u^{(n-1)}(s),u^{(n-1)}(s-r), \mathcal{L}_{u^{(n-1)}(s)}) ds+\int_\tau^t g(s) ds
\\
&\, +\int_\tau^t \tilde{\sigma}(s, u^{(n-1)}(s),u^{(n-1)}(s-r), \mathcal{L}_{u^{(n-1)}(s)})dW(s),~\forall  t\in [\tau, T]
\end{split}
\end{equation}
with initial condition $u_\tau^{(n)}(s)=\zeta(s)$ for all $s \in [-r, 0]$.
It is easy to see that
\[u^{(n)} \in L^2(\Omega, C([\tau,T],\ell^2))
\] for each $n \in \mathbb{N}$.
By \eqref{bounded-A}, \eqref{bound-1} and \eqref{bound-2}, we have that for any $n \in \mathbb{N}^+$ and $\tau \le t \le T$,
\begin{eqnarray*}
\!\!\!&&\!\!\! \mathbb{E}(\sup_{\tau \le \varrho \le t} \|u^{(n)}(\varrho)\|^2)
\\
\!\!\!&\le&\!\!\! 6\mathbb{E}(\|\zeta\|_r^2)+6(T-\tau)(18\nu^2+\lambda^2)
\int_\tau^t \mathbb{E}(\sup_{\tau \le \varrho \le s}\|u^{(n-1)}(\varrho)\|^2)ds
\\
\!\!\!&&\!\!\! +24(T-\tau) \|\gamma\|_{L^\infty([\tau,T],\ell^2)}^2 \mathbb{E}\int_\tau^t(1+\|u^{(n-1)}(s)\|^2+\|u^{(n-1)}(s-r)\|^2
+\mathbb{E}\|u^{n-1}(s)\|^2) ds
\\
\!\!\!&&\!\!\! +36 \|\chi\|_{L^\infty(\mathbb{R},\ell^2)}^2 \mathbb{E}\int_\tau^t(\|u^{(n-1)}(s)\|^2+\|u^{(n-1)}(s-r)\|^2
+\mathbb{E}\|u^{n-1}(s)\|^2) ds
\\
\!\!\!&&\!\!\!+12 (T-\tau) \|\kappa\|_{L^\infty(\mathbb{R},\ell^2)}^2+6(T-\tau)\int_\tau^T \|g(s)\|^2 ds
\\
\!\!\!&\le&\!\!\! \left[ 6(18\nu^2+\lambda^2+12\|\gamma\|_{L^\infty([\tau,T],\ell^2)}^2)(T-\tau)
+108\|\chi\|_{L^\infty(\mathbb{R},\ell^2)}^2 \right]
\int_\tau^t \mathbb{E}(\sup_{\tau \le \varrho \le s}\|u^{(n-1)}(\varrho)\|^2)ds
\\
\!\!\!&&\!\!\! +\left[ 6+24r(T-\tau) \|\gamma\|_{L^\infty([\tau,T],\ell^2)}^2+36r\|\chi\|_{L^\infty(\mathbb{R},\ell^2)}^2 \right]
\mathbb{E}(\|\zeta\|^2_{\mathcal{C}_r})
\\
\!\!\!&&\!\!\! +12 \left[ 2\|\gamma\|_{L^\infty([\tau,T],\ell^2)}^2(T-\tau)+\|\kappa\|_{L^\infty(\mathbb{R},\ell^2)}^2
+\frac{1}{2}\int_\tau^T \|g(s)\|^2 ds \right] (T-\tau).
\end{eqnarray*}
This implies that for any $k \in \mathbb{N}^+$ and $\tau \le t \le T$,
\begin{eqnarray*}
\sup_{1 \le n \le k}\{\mathbb{E}(\sup_{\tau \le \varrho \le t} \|u^{(n)}(\varrho)\|^2)\}
\!\!\!&\le&\!\!\! \Delta_2+\Delta_1
\int_\tau^t \sup_{1 \le n \le k} \{\mathbb{E}(\sup_{\tau \le \varrho \le s}\|u^{(n-1)}(\varrho)\|^2)\}ds,
\end{eqnarray*}
where $\Delta_1$ and $\Delta_2$ are positive constants independent of $n$ and $t$. Thus, we have that
\begin{eqnarray*}
\!\!\!&&\!\!\! \sup_{1 \le n \le k}\{\mathbb{E}(\sup_{\tau \le \varrho \le t} \|u^{(n)}(\varrho)\|^2)\}
\\
\!\!\!&\le&\!\!\! \Delta_2+\Delta_1
\int_\tau^t \left(\mathbb{E}(\|\zeta(0)\|^2)+\sup_{1 \le n \le k}\{\mathbb{E}(\sup_{\tau \le \varrho \le s}\|u^{(n)}(\varrho)\|^2)\} \right)ds
\\
\!\!\!&\le&\!\!\!  \Delta_2+\Delta_1 (T-\tau) \mathbb{E}\|\zeta\|_{\mathcal{C}_r}^2+
\Delta_1\int_\tau^t \sup_{1 \le n \le k}\{\mathbb{E}(\sup_{\tau \le \varrho \le s}\|u^{(n)}(\varrho)\|^2)\}ds.
\end{eqnarray*}
Employing the Gronwall inequality, we obtain that for any $k \in \mathbb{N}^+$ and $\tau \le t \le T$,
\begin{eqnarray*}
\sup_{1 \le n \le k}\{\mathbb{E}(\sup_{\tau \le \varrho \le t} \|u^{(n)}(\varrho)\|^2)\} \!\!\!&\le&\!\!\!  \left[\Delta_2+\Delta_1 (T-\tau) \mathbb{E}\|\zeta\|_{\mathcal{C}_r}^2 \right] e^{\Delta_1 (T-\tau)}.
\end{eqnarray*}
By the arbitrariness of $k$, it  follows that for all $n \in \mathbb{N}^+$ and $\tau \le t \le T$,
\begin{eqnarray} \label{pro1-step1-1}
\mathbb{E}(\sup_{\tau \le \varrho \le t} \|u^{(n)}(\varrho)\|^2) \!\!\!&\le&\!\!\! \left[\Delta_2+\Delta_1 (T-\tau) \mathbb{E}\|\zeta\|_{\mathcal{C}_r}^2\right]  e^{\Delta_1 T}.
\end{eqnarray}

To simplify the notation, we will denote
$
\xi^{(n)}(t):=u^{(n+1)}(t)-u^{(n)}(t)
$
for all $n \in \mathbb{N}$ and $t \in [\tau-r,T]$. As $n=0$, $\xi^{(0)}(t):=u^{(1)}(t)-\zeta(0)$ for $t \in [\tau-r,T]$.
Let
\begin{eqnarray*}
M\!\!\!&:=&\!\!\! 5(T-\tau)\Big\{18\nu^2+\lambda^2+\bigl(4\|\gamma\|_{L^\infty([\tau, T], \ell^2)}^2
+6 c_1 \|\chi\|_{L^{\infty}(\mathbb{R},\ell^2)}^2\bigr)\bigl[3(T-\tau)+r \bigr] \Big\}\mathbb{E}\|\zeta\|_{\mathcal{C}_r}^2
\\
\!\!\!&&\!\!\! +5(T-\tau) \Big\{4(T-\tau) \|\gamma\|_{L^\infty([\tau, T], \ell^2)}^2+\int_\tau^T \|g(s)\|^2 ds+2c_1\|\kappa\|_{L^{\infty}(\mathbb{R},\ell^2)}^2 \Big\}
\end{eqnarray*}
and
\begin{eqnarray*}
N\!\!\!&:=&\!\!\! 4(T-\tau)\{(18 \nu^2+\lambda^2)+2[\|\Theta\|_{L^\infty([\tau,T], \ell^2)}^2+4\|\psi\|_{L^\infty([\tau,T], \ell^2)}^2]\}
+36c_1 \|\chi\|_{L^\infty(\mathbb{R},\ell^2)}^2 .
\end{eqnarray*}
We first claim by induction that for all $n \in \mathbb{N}$ and $\tau \le t \le T$,
\begin{eqnarray}\label{pro1-step1-2}
E(\sup_{\tau \le \varrho \le t} \|\xi^{(n)}(\varrho)\|^2) \!\!\!&\le&\!\!\! \frac{M [N(t-\tau)]^n}{n!}.
\end{eqnarray}
Now we prove this claim.
Indeed, as $n=0$, we have that for all $\tau \le t \le T$,
\begin{equation*}
\begin{split}
 u^{(1)}(t) + \nu \int_\tau^t Au^{(0)}(s) ds +\lambda \int_\tau^t u^{(0)}(s) ds
=&\,\zeta(0)+\int_\tau^t f(s,u^{(0)}(s), u^{(0)}(s-r), \mathcal{L}_{u^{(0)}(s)}) ds+\int_\tau^t g(s) ds
\\
& \, +\int_\tau^t \tilde{\sigma}(s,u^{(0)}(s), u^{(0)}(s-r), \mathcal{L}_{u^{(0)}(s)})dW(s).
\end{split}
\end{equation*}
Along with \eqref{bounded-A}, we get that for all $\tau \le t \le T$,
\begin{eqnarray*}
E(\sup_{\tau \le \varrho \le t} \|\xi^{(0)}(\varrho)\|^2)
\!\!\!& \le &\!\!\! 5(T-\tau)[(18\nu^2+\lambda^2)\mathbb{E}(\|\zeta\|_{\mathcal{C}_r}^2)+\int_\tau^T \|g(s)\|^2 ds]
\\
\!\!\!&  &\!\!\! + 5 (T-\tau) \int_\tau^t \mathbb{E}\|f(s, u^{(0)}(s), u^{(0)}(s-r), \mathcal{L}_{u^{(0)}(s)})\|^2 ds
\\
\!\!\!&&\!\!\!+5 \mathbb{E}(\sup_{\tau \le \varrho \le t}\|\int_\tau^\varrho \tilde{\sigma}(s, u^{(0)}(s), u^{(0)}(s-r), \mathcal{L}_{u^{(0)}(s)})dW(s)\|^2).
\end{eqnarray*}
By using \eqref{fi-2}, \eqref{bound-1} and \eqref{dis-property}, for all $\tau \le t \le T$ we observe that
\begin{eqnarray*}
\!\!\!&  &\!\!\! 5 (T-\tau) \int_\tau^t \mathbb{E}\|f(s, u^{(0)}(s), u^{(0)}(s-r), \mathcal{L}_{u^{(0)}(s)})\|^2 ds
\\
\!\!\!& \le &\!\!\! 20 (T-\tau) \|\gamma\|_{L^\infty([\tau, T], \ell^2)}^2
\left\{\bigl[3(T-\tau)+r\bigr]\mathbb{E}\|\zeta\|_{\mathcal{C}_r}^2+T-\tau \right\}.
\end{eqnarray*}
Utilizing the BDG inequality, \eqref{dis-property} and \eqref{bound-2},  for all $\tau \le t \le T$ we have that
\begin{equation*}
\begin{split}
&\, 5 \mathbb{E}(\sup_{\tau \le \varrho \le t}\|\int_\tau^\varrho \tilde{\sigma}(s, u^{(0)}(s), u^{(0)}(s-r), \mathcal{L}_{u^{(0)}(s)})dW(s)\|^2)
\\
\le &\, 5 c_1 \int_\tau^t
\mathbb{E} \|\tilde{\sigma}(s, u^{(0)}(s), u^{(0)}(s-r), \mathcal{L}_{u^{(0)}(s)})\|_{L_2(\ell^2, \ell^2)}^2 ds
\\
\le &\, 30 c_1 \|\chi\|_{L^{\infty}(\mathbb{R},\ell^2)}^2 [ 3(T-\tau)+r] \mathbb{E}\|\zeta\|_{\mathcal{C}_r}^2
+10c_1(T-\tau)\|\kappa\|_{L^{\infty}(\mathbb{R},\ell^2)}^2,
\end{split}
\end{equation*}
where $c_1$ follows from the BDG inequality.
Based on the above analyses, \eqref{pro1-step1-2} holds for $n=0$.
Next, we assume that \eqref{pro1-step1-2} holds for $k=n-1$. To prove the case of $k=n$, we first observe for all $\tau \le t \le T$ that
\begin{equation*}
\begin{split}
&\, \xi^{(n)}(t) + \nu \int_\tau^t A\xi^{(n-1)}(s) ds +\lambda \int_\tau^t \xi^{(n-1)}(s) ds
\\
=&\, \int_\tau^t [f(s, u^{(n)}(s), u^{(n)}(s-r), \mathcal{L}_{u^{(n)}(s)})
-f(s, u^{(n-1)}(s), u^{(n-1)}(s-r), \mathcal{L}_{u^{(n-1)}(s)})] ds
\\
&\,+\int_\tau^t [\tilde{\sigma}(s, u^{(n)}(s), u^{(n)}(s-r), \mathcal{L}_{u^{(n)}(s)})
-\tilde{\sigma}(s, u^{(n-1)}(s), u^{(n-1)}(s-r), \mathcal{L}_{u^{(n-1)}(s)})] dW(s).
\end{split}
\end{equation*}
Then, we have that for all $\tau \le t \le T$,
\begin{equation}\label{pro1-step1-3}\small
\begin{split}
&\,\mathbb{ E}(\sup_{\tau \le \varrho \le t}\|\xi^{(n)}(\varrho)\|^2)
\\
\le &\, 4(T-\tau)(18 \nu^2+\lambda^2)\int_\tau^t \mathbb{ E}(\sup_{\tau \le \varrho \le s} \|\xi^{(n-1)}(\varrho)\|^2) ds
\\
&\, +4 (T-\tau)\int_\tau^t \mathbb{ E} \|f(s, u^{(n)}(s), u^{(n)}(s-r), \mathcal{L}_{u^{(n)}(s)})
-f(s, u^{(n-1)}(s), u^{(n-1)}(s-r), \mathcal{L}_{u^{(n-1)}(s)})\|^2 ds
\\
&\, +4\mathbb{ E}(\sup_{\tau \le \varrho \le t} \|\int_\tau^\varrho [\tilde{\sigma}(s, u^{(n)}(s), u^{(n)}(s-r), \mathcal{L}_{u^{(n)}(s)})
\\
&\,\qquad\qquad\qquad\quad -\tilde{\sigma}(s, u^{(n-1)}(s), u^{(n-1)}(s-r), \mathcal{L}_{u^{(n-1)}(s)})] dW(s)\|^2).
\end{split}
\end{equation}
For the second term in the right-hand side of \eqref{pro1-step1-3},
by using \eqref{dis-property} and \eqref{f-Lip} we find that for all $\tau \le t \le T$,
\begin{equation}\label{pro1-step1-4}\small
\begin{split}
&\, 4(T-\tau) \int_\tau^t \mathbb{E} \|f(s, u^{(n)}(s), u^{(n)}(s-r), \mathcal{L}_{u^{(n)}(s)})
-f(s, u^{(n-1)}(s), u^{(n-1)}(s-r), \mathcal{L}_{u^{(n-1)}(s)})\|^2 ds
\\
&\, +16 (T-\tau) \|\psi\|_{L^\infty([\tau,T], \ell^2)}^2 \int_\tau^t \mathbb{E}(\|\xi^{(n-1)}(s-r)\|^2+\mathbb{E}\|\xi^{(n-1)}(s)\|^2) ds
\\
\le &\, 8 (T-\tau)\left( \|\Theta\|_{L^\infty([\tau,T], \ell^2)}^2+4\|\psi\|_{L^\infty([\tau,T], \ell^2)}^2\right) \int_\tau^t \mathbb{E} (\sup_{\tau \le \varrho \le s}\|\xi^{(n-1)}(\varrho)\|^2) ds.
\end{split}
\end{equation}
For the last term in the right-hand side of \eqref{pro1-step1-3},
by using the BDG inequality, \eqref{segama-Lip} and \eqref{dis-property} we find that for all $\tau \le t \le T$,
\begin{equation} \label{pro1-step1-5}\small
\begin{split}
&\, 4 \mathbb{E}(\sup_{\tau\le \varrho \le t} \|\int_\tau^\varrho[\tilde{\sigma}(s,u^{(n)}(s), u^{(n)}(s-r), \mathcal{L}_{u^{(n)}(s)})
-\tilde{\sigma}(s,u^{(n-1)}(s), u^{(n-1)}(s-r), \mathcal{L}_{u^{(n-1)}(s)})]dW(s)\|^2 )
\\
\le &\, 4 c_1 \int_\tau^t \mathbb{E} \|\tilde{\sigma}(s, u^{(n)}(s), u^{(n)}(s-r), \mathcal{L}_{u^{(n)}(s)})
-\tilde{\sigma}(s, u^{(n-1)}(s), u^{(n-1)}(s-r), \mathcal{L}_{u^{(n-1)}(s)})\|_{L_2(\ell^2, \ell^2)}^2 ds
\\
\le &\, 36c_1 \|\chi\|_{L^\infty(\mathbb{R},\ell^2)}^2 \int_\tau^t \mathbb{E} (\sup_{\tau \le \varrho \le s}\|\xi^{(n-1)}(\varrho)\|^2) ds.
\end{split}
\end{equation}
In view of \eqref{pro1-step1-3}-\eqref{pro1-step1-5}, by induction hypothesis, we conclude that for all $t \in [\tau,T]$,
\begin{eqnarray*}\label{E-un-u}
\mathbb{E}(\sup_{\tau \le \varrho \le t}\|\xi^{(n)}(\varrho)\|^2)
\!\!\!& \le &\!\!\! N \int_\tau^t \frac{M[N (s-\tau)]^{n-1}}{(n-1)!}ds= \frac{M [N(t-\tau)]^n}{n!}.
\end{eqnarray*}
Then assertion \eqref{pro1-step1-2} is true for all $n \in \mathbb{N}$.

Employing the Chebyshev inequality we get that
\begin{eqnarray*}
\mathbb{P}(\sup_{\tau \le \varrho \le T}\|\xi^{(n)}(\varrho)\|>\frac{1}{2^n}) \!\!\!&\le &\!\!\! \frac{M (4NT)^n}{n!}.
\end{eqnarray*}
By the convergence of the positive series $\sum_{n=0}^{\infty}\frac{M (4 NT)^n}{n!}$ and the Borel-Cantelli lemma,
for almost all $\omega \in \Omega$ there exists a positive integer $N_0=N_0(\omega)$ such that for all $n > N_0$
\begin{eqnarray*}
\sup_{\tau \le \varrho \le T} \|\xi^{(n)}(\varrho)\|\le \frac{1}{2^n},
\end{eqnarray*}
which implies that the partial sum sequence
\begin{eqnarray*}
u^{(0)}+\sum_{k=0}^{n-1} (u^{(k+1)}(t)-u^{(k)}(t))=u^{(n)}(t)
\end{eqnarray*}
is convergent uniformly on $t \in [\tau, T]$ for almost all $\omega \in \Omega$. We denote the limit by $u(t)$ for $t \in [\tau-r,T]$ with
$u(s)=\zeta(s)$ for $\tau-r \le s \le \tau$.  Clearly, $u$ is $\mathcal{F}_t$-adapted for $t \ge \tau$. Additionally, by \eqref{pro1-step1-1} and the dominated convergence theorem, we get that
\begin{eqnarray}\label{converg-un}
\mathbb{E}(\sup_{\tau \le \varrho \le T}\|u^{(n)}(\varrho)-u(\varrho)\|^2)\rightarrow 0,~\mbox{as}~n\rightarrow \infty.
\end{eqnarray}

In order to verify that $u(t)$ satisfies equation
\eqref{inte-ob-eq}, according to \eqref{bounded-A} and \eqref{converg-un}, we obtain for each $t \in [\tau,T]$ that
\begin{eqnarray}\label{converg-inter-un-1}
\begin{split}
&\, \mathbb{E} \|\nu \int_\tau^t A (u^{(n-1)}(s)-u(s)) ds+\lambda \int_\tau^t (u^{(n-1)}(s)-u(s)) ds\|^2
\\
\le &\,
2(T-\tau)(18\nu^2+\lambda^2 ) \int_\tau^T \mathbb{E}(\sup_{\tau \le \varrho \le s} \|u^{(n-1)}(\varrho)-u(\varrho)\|^2) ds\rightarrow 0,~\mbox{as}~n\rightarrow \infty.
\end{split}
\end{eqnarray}
By using \eqref{dis-property}, \eqref{f-Lip}, \eqref{segama-Lip} and \eqref{converg-un}, we find that for each $t \in [\tau,T]$,
\begin{equation} \label{converg-inter-un-2}
\begin{split}
&\, \mathbb{E}\|\int_\tau^t [f(s, u^{(n-1)}(s),u^{(n-1)}(s-r), \mathcal{L}_{u^{(n-1)}(s)})-f(s,u(s),u(s-r), \mathcal{L}_{u(s)})] ds\|^2
\\
&\, + \mathbb{E}\|\int_\tau^t [\sigma(s,u^{(n-1)}(s),u^{(n-1)}(s-r), \mathcal{L}_{u^{(n-1)}(s)})-\sigma(s,u(s),u(s-r),
\mathcal{L}_{u(s)})] dW(s)\|^2
\\
\le &\,  2\|\Theta\|_{L^\infty([\tau, T], \ell^2)}^2(T-\tau) \int_\tau^t \mathbb{E}\|u^{(n-1)}(s)-u(s)\|^2ds
\\
&\, +4 \|\psi\|_{L^\infty([\tau, T], \ell^2)}^2(T-\tau) \int_\tau^t \mathbb{E}(\|u^{(n-1)}(s-r)-u(s-r)\|^2
+ \mathbb{E}\|u^{(n-1)}(s)-u(s)\|^2) ds
\\
&+3 \|\chi\|_{L^\infty(\mathbb{R}, \ell^2)}^2 \int_\tau^t \mathbb{E}(\|u^{(n-1)}(s)-u(s)\|^2+\mathbb{E}\|u^{(n-1)}(s-r)-u(s-r)\|^2
\\
& \qquad\qquad\qquad\qquad\quad + \mathbb{E}\|u^{(n-1)}(s)-u(s)\|^2)ds
\\
\le &\,   \left[ 2\|\Theta\|_{L^\infty([\tau, T], \ell^2)}^2(T-\tau)+8 \|\psi\|_{L^\infty([\tau, T], \ell^2)}^2(T-\tau)
+9 \|\chi\|_{L^\infty(\mathbb{R}, \ell^2)}^2\right]
\\
&\, \times \int_\tau^T \mathbb{E}(\sup_{\tau \le \varrho \le s}\|u^{(n-1)}(\varrho)-u(\varrho)\|^2) ds\rightarrow 0,
~\mbox{as}~n\rightarrow \infty.
\end{split}
\end{equation}
Combing \eqref{converg-inter-un-1} with \eqref{converg-inter-un-2}, we find that $u$ satisfies \eqref{inte-ob-eq} by letting $n\rightarrow \infty$ in \eqref{pro1-1+}.
Finally, in virtue of \eqref{converg-un}, there exists $N_1>0$ such that
$$\mathbb{E}(\sup_{\tau \le \varrho \le T}\|u^{(N_1)}(\varrho)-u(\varrho)\|^2)<\frac{1}{2}.$$
Using \eqref{pro1-step1-1} again, we have that
\begin{equation}\label{cha}
\begin{split}
\mathbb{E}  \sup_{\tau \le \varrho \le T} \|u(\varrho)\|^2 \le &
2\mathbb{E}  \sup_{\tau \le \varrho \le T} \|u^{(N_1)}(\varrho)-u(\varrho)\|^2)+
2\mathbb{E}  \sup_{\tau \le \varrho \le T} \|u^{(N_1)}(\varrho)\|^2)
\\
\le & 1+2(\Delta_2+\Delta_1 (T-\tau) \mathbb{E}\|\zeta\|_{\mathcal{C}_r}^2) e^{\Delta_1 T}.
\end{split}
\end{equation}
Hence $u \in L^2(\Omega, C([\tau-r,T], \ell^2))$.
\vskip0.1in
{\bf Step 2.} Next, we prove the uniqueness.
Let $u(t)$ and $v(t)$ be two strong solutions of \eqref{ob-3}-\eqref{ob-4} in $L^2(\Omega, C([\tau-r,T],\ell^2))$.
For any $\tau \le t \le T$, notice that
\begin{eqnarray*}
\!\!\!&&\!\!\! u(t)-v(t)+\nu \int_\tau^t A (u(s)-v(s)) ds+\lambda \int_\tau^t (u(s)-v(s)) ds
\\
\!\!\!& =&\!\!\! \int_\tau^t [f(s, u(s), u(s-r), \mathcal{L}_{u(s)})-f(s, v(s), v(s-r), \mathcal{L}_{v(s)})]ds
\\
\!\!\!& &\!\!\! +\int_\tau^t [\tilde{\sigma}(s, u(s), u(s-r), \mathcal{L}_{u(s)})-\tilde{\sigma}(s, v(s), v(s-r), \mathcal{L}_{v(s)})]dW(s).
\end{eqnarray*}
Utilizing the It\^o formula, and taking the supremum and expectation, we have that for all $\tau \le t \le T$,
\begin{equation} \label{pro1-1}\small
\begin{split}
&\, \mathbb{E}(\sup_{\tau \le \varrho \le t }\|u(\varrho)-v(\varrho)\|^2)
\\
\le &\, 2 \int_\tau^t \mathbb{E} \|u(s)-v(s)\| \cdot \|f(s, u(s), u(s-r), \mathcal{L}_{u(s)})-f(s, v(s), v(s-r), \mathcal{L}_{v(s)})\| ds
\\
&\,+\int_\tau^t \mathbb{E} \|\tilde{\sigma}(s, u(s), u(s-r), \mathcal{L}_{u(s)})-\tilde{\sigma}(s, v(s), v(s-r), \mathcal{L}_{v(s)})\|_{L_2(\ell^2, \ell^2)}^2 ds
\\
&\, +2 \mathbb{E}\sup_{\tau \le \varrho \le t} \|\int_\tau^\varrho (u(s)-v(s), [\tilde{\sigma}(s, u(s), u(s-r), \mathcal{L}_{u(s)})-\tilde{\sigma}(s, v(s), v(s-r), \mathcal{L}_{v(s)})]dW(s))\|.
\end{split}
\end{equation}
We estimate each term in the right-hand side of \eqref{pro1-1} separately. For the first term, by using \eqref{f-Lip} and \eqref{dis-property}
we get that for all $\tau \le t \le T$,
\begin{equation}\label{pro1-2}
\begin{split}
&\, 2 \int_\tau^t \mathbb{E} \|u(s)-v(s)\| \cdot \|f(s, u(s), u(s-r), \mathcal{L}_{u(s)})-f(s, v(s), v(s-r), \mathcal{L}_{v(s)})\| ds
\\
\le &\, \int_\tau^t \mathbb{E} \|u(s)-v(s)\|^2 ds+  \int_\tau^t \mathbb{E} \|f(s, u(s), u(s-r), \mathcal{L}_{u(s)})-f(s, v(s), v(s-r), \mathcal{L}_{v(s)})\|^2 ds
\\
&\, +  4 \|\psi\|_{L^\infty([\tau, T], \ell^2)}^2 \int_\tau^t \mathbb{E}(\|u(s-r)-v(s-r)\|^2+\mathbb{E}\|u(s)-v(s)\|^2) ds
\\
\le &\, (1+2\|\Theta\|_{L^\infty([\tau, T], \ell^2)}^2+8 \|\psi\|_{L^\infty([\tau, T], \ell^2)}^2)
\int_\tau^t \mathbb{E} (\sup_{\tau \le \varrho \le s}\|u(\varrho)-v(\varrho)\|^2) ds.
\end{split}
\end{equation}
For the second term, by using \eqref{dis-property} and \eqref{segama-Lip} we have that for all $\tau \le t \le T$,
\begin{equation} \label{pro1-3}
\begin{split}
&\,  \int_\tau^t \mathbb{E} \|\tilde{\sigma}(s, u(s), u(s-r), \mathcal{L}_{u(s)})-\tilde{\sigma}(s, v(s), v(s-r), \mathcal{L}_{v(s)})\|_{L_2(\ell^2, \ell^2)}^2 ds
\\
\le &\, 3 \|\chi\|_{L^\infty(\mathbb{R}, \ell^2)}^2 \int_\tau^t \mathbb{E} (\sup_{\tau \le \varrho \le s}\|u(\varrho)-v(\varrho)\|^2) ds.
\end{split}
\end{equation}
For the last term, by using the BDG inequality and \eqref{pro1-3}  we find that for all $\tau \le t \le T$,
\begin{equation} \label{pro1-4}\small
\begin{split}
&\,  2 \mathbb{E}\sup_{\tau \le \varrho \le t} \|\int_\tau^\varrho (u(s)-v(s), [\tilde{\sigma}(s, u(s), u(s-r), \mathcal{L}_{u(s)})-\tilde{\sigma}(s, v(s), v(s-r), \mathcal{L}_{v(s)})]dW(s))\|
\\
\le &\, 2 c_1 \mathbb{E}\{ \int_\tau^t
\|(u(s)-v(s), [\tilde{\sigma}(s, u(s), u(s-r), \mathcal{L}_{u(s)})-\tilde{\sigma}(s, v(s), v(s-r), \mathcal{L}_{v(s)})]\cdot)
\|_{L_2(\ell^2, \ell^2)}^2 ds \}^{\frac{1}{2}}
\\
\le &\, 2c_1\mathbb{E}\{\int_\tau^t \|u(s)-v(s)\|^2 \cdot \|\tilde{\sigma}(s, u(s), u(s-r), \mathcal{L}_{u(s)})
-\tilde{\sigma}(s, v(s), v(s-r), \mathcal{L}_{v(s)})\|_{L_2(\ell^2, \ell^2)}^2 ds\}^{\frac{1}{2}}
\end{split}
\end{equation}
\begin{equation*}\small
\begin{split}
\le &\, 2c_1\mathbb{E}\{\sup_{\tau \le \varrho \le t}\|u(\varrho)-v(\varrho)\|^2
 (\int_\tau^t \|\tilde{\sigma}(s, u(s), u(s-r), \mathcal{L}_{u(s)})-\tilde{\sigma}(s,v(s), v(s-r), \mathcal{L}_{v(s)})\|_{L_2(\ell^2, \ell^2)}^2 ds)^{\frac{1}{2}}\}
\\
\le &\, \frac{1}{2} \mathbb{E}(\sup_{\tau \le \varrho \le t}\|u(\varrho)-v(\varrho)\|^2)
+ 2c_1^2 \int_\tau^t \mathbb{E}\|\tilde{\sigma}(s, u(s), u(s-r), \mathcal{L}_{u(s)})-\tilde{\sigma}(s,v(s), v(s-r), \mathcal{L}_{v(s)})\|_{L_2(\ell^2, \ell^2)}^2 ds
\\
\le &\, \frac{1}{2} \mathbb{E}(\sup_{\tau \le \varrho \le t}\|u(\varrho)-v(\varrho)\|^2)
+6c_1^2\|\chi\|_{L^\infty(\mathbb{R}, \ell^2)}^2 \int_\tau^t \mathbb{E} (\sup_{\tau \le \varrho \le s}\|u(\varrho)-v(\varrho)\|^2) ds.
\end{split}
\end{equation*}
Combining \eqref{pro1-1}-\eqref{pro1-4}, we get that for all $\tau \le t \le T$,
\begin{equation*}
\begin{split}
\mathbb{E}(\sup_{\tau \le \varrho \le t}\|u(\varrho)-v(\varrho)\|^2)
\le &\, 2 \left[ 1+2\|\Theta\|_{L^\infty([\tau, T], \ell^2)}^2+8 \|\psi\|_{L^\infty([\tau, T], \ell^2)}^2+
3 \|\chi\|_{L^\infty(\mathbb{R}, \ell^2)}^2(1+2c_1^2)\right]
\\
&\, \times \int_\tau^t \mathbb{E} (\sup_{\tau \le \varrho \le s}\|u(\varrho)-v(\varrho)\|^2) ds.
\end{split}
\end{equation*}
Employing the Gronwall inequality, we find that
\begin{eqnarray*}
\mathbb{E}(\sup_{r \le \varrho \le T}\|u(\varrho)-v(\varrho)\|^2)=0
\end{eqnarray*}
which implies that the uniqueness holds.
\\

{\bf Step 3.} Finally, we prove that $\mathcal{L}_{u_t} \in \mathcal{P}_2(\mathcal{C}_r)$ for any $\tau \le t \le T$. In fact,
by using \eqref{inte-ob-eq} and applying the It\^o formula, we have that
\begin{eqnarray*}
\!\!\!& &\!\!\! \|u(t)\|^2 + 2\nu \int_\tau^t\|Bu(s)\|^2 ds +2\lambda \int_\tau^t \|u(s)\|^2 ds
\\
\!\!\!&=&\!\!\! 2 \int_\tau^t(u(s), f(s, u(s), u(s-r), \mathcal{L}_{u(s)})) ds
+\int_\tau^t \| \tilde{\sigma}(s, u(s), u(s-r), \mathcal{L}_{u(s)})\|_{L_2(\ell^2, \ell^2)}^2 ds
\\
\!\!\!&&\!\!\!+2 \int_\tau^t (u(s), \tilde{\sigma}(u(s), u(s-r), \mathcal{L}_{u(s)})dW(s)),\ \forall t \in [\tau, T],
\end{eqnarray*}
which implies that for any $\tau \le t \le T$,
\begin{equation} \label{pro1-11}
\begin{split}
 \mathbb{E}(\sup_{\tau \le \varrho \le t}\|u(\varrho)\|^2)
\le &\, 2 \int_\tau^t \mathbb{E} \left( \|u(s)\| \cdot \|f(s, u(s), u(s-r), \mathcal{L}_{u(s)})\| \right) ds
\\
&\, +\int_\tau^t \mathbb{E} \|\tilde{\sigma}(s, u(s), u(s-r), \mathcal{L}_{u(s)})\|_{L_2(\ell^2, \ell^2)}^2 ds
\\
&\, +2 \mathbb{E}\sup_{\tau \le \varrho \le t} \|\int_\tau^\varrho (u(s), \tilde{\sigma}(s, u(s), u(s-r), \mathcal{L}_{u(s)})dW(s))\|.
\end{split}
\end{equation}
We estimate each term in the right-hand side of \eqref{pro1-11}, separately. For the first term, by using \eqref{dis-property} and \eqref{bound-1}
we get that for all $\tau \le t \le T$,
\begin{equation}\label{pro1-12}
\begin{split}
&\, 2 \int_\tau^t \mathbb{E} \|u(s)\| \cdot \|f(s, u(s), u(s-r), \mathcal{L}_{u(s)})\| ds
\\
\le &\, \int_\tau^t \mathbb{E} \|u(s)\|^2 ds+  \int_\tau^t \mathbb{E} \|f(s, u(s), u(s-r), \mathcal{L}_{u(s)})\|^2 ds
\\
\le &\, (1+12\|\gamma\|_{L^\infty([\tau, T], \ell^2)}^2)
\int_\tau^t \mathbb{E} (\sup_{\tau \le \varrho \le s}\|u(\varrho))\|^2) ds+4\|\gamma\|_{L^\infty([\tau, T], \ell^2)}^2
\left[ (T-\tau)+r\mathbb{E} \|\zeta\|_{\mathcal{C}_r}^2\right].
\end{split}
\end{equation}
For the second term, by using \eqref{dis-property} and \eqref{bound-2} we have that for all $\tau \le t \le T$,
\begin{equation} \label{pro1-13}
\begin{split}
&\,  \int_\tau^t \mathbb{E} \|\tilde{\sigma}(s, u(s), u(s-r), \mathcal{L}_{u(s)})\|_{L_2(\ell^2, \ell^2)}^2 ds
\\
\le &\, 18\|\chi\|_{L^\infty(\mathbb{R}, \ell^2)}^2 \int_\tau^t \mathbb{E} (\sup_{\tau \le \varrho \le s}\|u(\varrho))\|^2) ds
+6r\|\chi\|_{L^\infty(\mathbb{R}, \ell^2)}^2 \mathbb{E} \|\zeta\|_{\mathcal{C}_r}^2
+2 \|\kappa\|_{L^\infty(\mathbb{R},\ell^2)}^2(T-\tau).
\end{split}
\end{equation}
For the last term, by using the BDG inequality and \eqref{pro1-13}, we find that for all $\tau \le t \le T$,
\begin{equation} \label{pro1-14}
\begin{split}
&\,  2 \mathbb{E}(\sup_{\tau \le \varrho \le t} \|\int_\tau^\varrho (u(s), \tilde{\sigma}(s, u(s), u(s-r), \mathcal{L}_{u(s)})dW(s))\|)
\\
\le &\, 2 c_1 \mathbb{E}\{ \int_\tau^t
\|(u(s), \tilde{\sigma}(s, u(s), u(s-r), \mathcal{L}_{u(s)}) )
\|_{L_2(\ell^2, \ell^2)}^2 ds \}^{\frac{1}{2}}
\\
\le &\, 2c_1\mathbb{E}\{\int_\tau^t \|u(s)\|^2 \cdot \|\tilde{\sigma}(s, u(s), u(s-r), \mathcal{L}_{u(s)})\|_{L_2(\ell^2, \ell^2)}^2 ds\}^{\frac{1}{2}}
\\
\le &\, 2c_1\mathbb{E}\{\sup_{\tau \le \varrho \le t}\|u(\varrho)\|^2 \cdot (\int_\tau^t \|\tilde{\sigma}(s, u(s), u(s-r), \mathcal{L}_{u(s)})\|_{L_2(\ell^2, \ell^2)}^2 ds)^{\frac{1}{2}}\}
\\
\le &\, \frac{1}{2} \mathbb{E}(\sup_{\tau \le \varrho \le t}\|u(\varrho)\|^2)
+ 2c_1^2 \int_\tau^t \mathbb{E}\|\tilde{\sigma}(s, u(s), u(s-r), \mathcal{L}_{u(s)})\|_{L_2(\ell^2, \ell^2)}^2 ds
\\
\le &\, \frac{1}{2} \mathbb{E}(\sup_{\tau \le \varrho \le t}\|u(\varrho)\|^2)+36c_1^2\|\chi\|_{L^\infty(\mathbb{R}, \ell^2)}^2 \int_\tau^t \mathbb{E} (\sup_{\tau \le \varrho \le s}\|u(\varrho))\|^2) ds
\\
&\, +12r c_1^2\|\chi\|_{L^\infty(\mathbb{R}, \ell^2)}^2 \mathbb{E} \|\zeta\|_{\mathcal{C}_r}^2
+4c_1^2 \|\kappa\|_{L^\infty(\mathbb{R},\ell^2)}^2(T-\tau).
\end{split}
\end{equation}
Combining \eqref{pro1-11}-\eqref{pro1-14}, we get that for all $\tau \le t \le T$,
\begin{equation*}
\begin{split}
&\, \mathbb{E}(\sup_{\tau \le \varrho \le t}\|u(\varrho)\|^2)
\\
\le &\, 8\|\gamma\|_{L^\infty([\tau, T], \ell^2)}^2
\left[ (T-\tau)+r\mathbb{E} \|\zeta\|_{\mathcal{C}_r}^2\right]
\\
&\, +2(1+2c_1^2)\left[ \|\kappa\|_{L^\infty(\mathbb{R},\ell^2)}^2(T-\tau)+6r\|\chi\|_{L^\infty(\mathbb{R}, \ell^2)}^2 \mathbb{E} \|\zeta\|_{\mathcal{C}_r}^2\right]
\\ &\,+2\left[ 1+12\|\gamma\|_{L^\infty([\tau, T], \ell^2)}^2+18(1+2c_1^2)\|\chi\|_{L^\infty(\mathbb{R}, \ell^2)}^2\right]
\int_\tau^t \mathbb{E} (\sup_{\tau \le \varrho \le s}\|u(\varrho))\|^2) ds.
\end{split}
\end{equation*}
Using the Gronwall inequality, we have that for all $\tau-r \le t \le T$,
\begin{equation*}
\begin{split}
 \mathbb{E}(\sup_{\tau-r \le \varrho \le T}\|u(\varrho)\|^2)
=&\, \left\{8\|\gamma\|_{L^\infty([\tau, T], \ell^2)}^2
[(T-\tau)+r\mathbb{E} \|\zeta\|_{\mathcal{C}_r}^2]  \right.
\\
&\, \left. +2(1+2c_1^2)[\|\kappa\|_{L^\infty(\mathbb{R},\ell^2)}^2(T-\tau)+6r\|\chi\|_{L^\infty(\mathbb{R}, \ell^2)}^2 \mathbb{E} \|\zeta\|_{\mathcal{C}_r}^2] \right\}
\\
&\, \cdot e^{2 \left[ 1+12\|\gamma\|_{L^\infty([\tau, T], \ell^2)}^2+18(1+2c_1^2)\|\chi\|_{L^\infty(\mathbb{R}, \ell^2)}^2\right]
(T-\tau)}+ \mathbb{E}\|\zeta\|_{\mathcal{C}_r}^2.
\end{split}
\end{equation*}
This yields that for all $\tau \le t \le T$,
\begin{eqnarray*}
\mathbb{E}\|u_t\|_{\mathcal{C}_r}^2  \le \mathbb{E}(\sup_{\tau-r \le \varrho \le T}\|u(\varrho)\|^2)<\infty.
\end{eqnarray*}
Hence, $\mathcal{L}_{u_t} \in \mathcal{P}_2(\mathcal{C}_r)$ for all $\tau \le t \le T$. This completes the proof.
\end{proof}
\begin{rmk}
Furthermore, using similar methods as Step 1 in Proposition \ref{Pro1},
if $\zeta \in L^4(\Omega, \mathcal{C}_r)$ is an $\mathcal{F}_r$-measurable random variable and conditions {\bf (H1)}-{\bf (H3)} are satisfied,  then system \eqref{ob-3}-\eqref{ob-4}
has  a strong solution $u \in L^4(\Omega, C([\tau-r,T],\ell^2))$ for any $T>\tau-r$.
\end{rmk}

The following proposition gives the existence and uniqueness of weak solutions for stochastic delay lattice system \eqref{ob-3}.
\begin{pro}\label{weak-uniquess}
Equation \eqref{ob-3}  admits  a unique weak solution for any an initial distribution $\mu_0 \in \mathcal{P}_2(\mathcal{C}_r)$.
Weak uniqueness means that if $(\{u(t)\}_{t \ge \tau-r}, \{W(t)\}_{t \ge \tau})$ is a weak solution under the stochastic basis
$(\Omega, \mathcal{F}, \{\mathcal{F}_t\}_{t \ge \tau}, \mathbb{P})$ and
$(\{\tilde{u}(t)\}_{t \ge \tau-r}, \{\tilde{W}(t)\}_{t \ge \tau})$ is a weak solution under the stochastic basis
$(\tilde{\Omega}, \tilde{\mathcal{F}}, \{\tilde{\mathcal{F}_t}\}_{t \ge \tau}, \tilde{\mathbb{P}})$
to system \eqref{ob-3} with the same initial distribution
$\mathcal{L}_{u_\tau}|_{\mathbb{P}}=\mathcal{L}_{\tilde{u}_\tau}|_{\tilde{\mathbb{P}}}$,
then $\mathcal{L}_{u(\cdot)}|_{\mathbb{P}}=\mathcal{L}_{\tilde{u}(\cdot)}|_{\tilde{\mathbb{P}}}$.
\end{pro}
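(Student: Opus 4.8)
The plan is to \emph{decouple} the McKean--Vlasov structure and reduce everything to the well-posedness theory of Proposition~\ref{Pro1}. For a fixed continuous curve $m(\cdot)\in C([\tau,\infty),\mathcal{L}^2)$ I would freeze the running law and consider the auxiliary distribution-\emph{independent} delay lattice equation obtained from \eqref{ob-3} by replacing $\mathcal{L}_{u(t)}$ with $m(t)$. By \eqref{f-Lip} and \eqref{segama-Lip} the coefficients $(t,x,y)\mapsto f(t,x,y,m(t))$ and $(t,x,y)\mapsto\tilde{\sigma}(t,x,y,m(t))$ are globally Lipschitz in $(x,y)$ on bounded time intervals, so rerunning Steps~1 and~2 of Proposition~\ref{Pro1} verbatim (the $\mathbb{W}_2^{\mathbb{R}}$-terms simply drop out) shows that, for any $\mathcal{F}_\tau$-measurable initial segment of law $\mu_\tau$, this auxiliary equation has a unique strong solution in $L^2(\Omega,C([\tau-r,T],\ell^2))$; by a Yamada--Watanabe type argument (strong existence plus pathwise uniqueness) it then has a unique \emph{weak} solution, i.e.\ its law on $C([\tau-r,\infty),\ell^2)$ is determined by $m(\cdot)$ and $\mu_\tau$ alone.

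\textbf{Existence.} Given $\mu_0\in\mathcal{P}_2(\mathcal{C}_r)$, I would enlarge the probability space to $\Omega=\mathcal{C}_r\times\Omega'$ equipped with $\mu_0\otimes\mathbb{P}'$, where $\Omega'$ carries a cylindrical $Q$-Wiener process with $Q=I$; then $\zeta(\omega_1,\omega'):=\omega_1$ is $\mathcal{F}_\tau$-measurable, lies in $L^2(\Omega,\mathcal{C}_r)$, and has law $\mu_0$. Proposition~\ref{Pro1} produces a strong solution $u$ with $\mathcal{L}_{u_t}\in\mathcal{P}_2(\mathcal{C}_r)$ for all $t\ge\tau$, and $(\{u(t)\}_{t\ge\tau-r},\{W(t)\}_{t\ge\tau})$ is then a weak solution of \eqref{ob-3} starting at $\mu_0$.

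\textbf{Uniqueness.} Let $(\{u(t)\},\{W(t)\})$ on $(\Omega,\mathcal{F},\{\mathcal{F}_t\},\mathbb{P})$ and $(\{\tilde{u}(t)\},\{\tilde{W}(t)\})$ on $(\tilde{\Omega},\tilde{\mathcal{F}},\{\tilde{\mathcal{F}}_t\},\tilde{\mathbb{P}})$ be two weak solutions with $\mathcal{L}_{u_\tau}|_{\mathbb{P}}=\mathcal{L}_{\tilde{u}_\tau}|_{\tilde{\mathbb{P}}}=:\mu_\tau$, and set $\mu(t):=\mathcal{L}_{u(t)}|_{\mathbb{P}}$, $\tilde{\mu}(t):=\mathcal{L}_{\tilde{u}(t)}|_{\tilde{\mathbb{P}}}$, which are continuous $\mathcal{L}^2$-valued curves by the remark following Definition~\ref{df-solu}. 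Then $u$ is a weak solution of the auxiliary equation with frozen curve $m=\mu$ and $\tilde{u}$ one with frozen curve $m=\tilde{\mu}$, so by the weak uniqueness of the auxiliary equation it suffices to prove $\mu=\tilde{\mu}$. I would realize on a common stochastic basis $(\hat{\Omega},\hat{\mathcal{F}},\{\hat{\mathcal{F}}_t\},\hat{\mathbb{P}})$ a cylindrical Wiener process $\hat{W}$ and an $\hat{\mathcal{F}}_\tau$-measurable initial segment $\hat{\zeta}$ with law $\mu_\tau$, and let $\hat{u},\hat{v}$ be the strong solutions of the auxiliary equations driven by $\hat{W}$ with initial segment $\hat{\zeta}$ and frozen curves $\mu,\tilde{\mu}$ respectively; weak uniqueness gives $\mathcal{L}_{\hat{u}(\cdot)}=\mathcal{L}_{u(\cdot)}$ and $\mathcal{L}_{\hat{v}(\cdot)}=\mathcal{L}_{\tilde{u}(\cdot)}$, hence $\mathcal{L}_{\hat{u}(s)}=\mu(s)$, $\mathcal{L}_{\hat{v}(s)}=\tilde{\mu}(s)$ for every $s$. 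The difference $w:=\hat{u}-\hat{v}$ then satisfies an integral equation with $w_\tau=0$, so applying It\^o's formula to $\|w(t)\|^2$ and arguing exactly as in Step~2 of Proposition~\ref{Pro1}---using \eqref{f-Lip}, \eqref{segama-Lip}, the BDG inequality, the bound $\|w(s-r)\|\le\sup_{\tau\le\varrho\le s}\|w(\varrho)\|$ (valid since $w_\tau=0$), and the key inequality \eqref{dis-property} in the form $\rho(\mu(s),\tilde{\mu}(s))^2=\rho(\mathcal{L}_{\hat{u}(s)},\mathcal{L}_{\hat{v}(s)})^2\le\mathbb{E}\|w(s)\|^2$---I expect to reach
\[
\mathbb{E}\Big(\sup_{\tau\le\varrho\le t}\|w(\varrho)\|^2\Big)\le C\int_\tau^t\mathbb{E}\Big(\sup_{\tau\le\varrho\le s}\|w(\varrho)\|^2\Big)\,ds,
\]
whence Gronwall's inequality forces $w\equiv0$ on every $[\tau-r,T]$, so $\mu=\tilde{\mu}$ and therefore $\mathcal{L}_{u(\cdot)}|_{\mathbb{P}}=\mathcal{L}_{\tilde{u}(\cdot)}|_{\tilde{\mathbb{P}}}$.

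\textbf{Main difficulty.} The obstacle is structural rather than computational: since the Markov operators of a McKean--Vlasov equation do not form a semigroup, ordinary SDE uniqueness cannot be invoked directly. The device that makes the argument work is the decoupling into a classical delay lattice SDE together with the self-consistency constraint linking the frozen curve $m$ to the law of its own solution, combined with the contraction-type estimate \eqref{dis-property}, which is precisely what closes the Gronwall loop. The delay only contributes bookkeeping over the history interval $[\tau-r,\tau]$, which is harmless because the coefficients depend on the past pointwise through $u(t-r)$ and the two auxiliary solutions can be coupled through a common initial segment.
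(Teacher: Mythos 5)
Your proposal is correct, and its core ingredients coincide with the paper's: both decouple the McKean--Vlasov structure by freezing the law curve, obtain strong well-posedness of the resulting distribution-independent delay lattice equation by rerunning Proposition~\ref{Pro1}, and upgrade to weak uniqueness of the frozen equation via Yamada--Watanabe. The two arguments part ways on how the law curves $\mu(\cdot)=\mathcal{L}_{u(\cdot)}|_{\mathbb{P}}$ and $\tilde{\mu}(\cdot)=\mathcal{L}_{\tilde{u}(\cdot)}|_{\tilde{\mathbb{P}}}$ are identified. The paper (following \cite{Huang2019}) builds a single intermediate process $\bar{u}$ on the \emph{second} basis solving the frozen equation with curve $\mu$ and initial segment $\tilde{u}_\tau$; weak uniqueness of the frozen equation gives $\mathcal{L}_{\bar{u}(\cdot)}=\mathcal{L}_{u(\cdot)}$, so $\bar{u}$ is in fact a solution of the McKean--Vlasov equation on that basis, and the pathwise uniqueness from Step~2 of Proposition~\ref{Pro1} then forces $\bar{u}=\tilde{u}$, whence $\mathcal{L}_{\tilde{u}(\cdot)}=\mathcal{L}_{u(\cdot)}$. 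You instead couple the \emph{two} frozen equations (curves $\mu$ and $\tilde{\mu}$) through a common Wiener process and a common initial segment and run an It\^o--BDG--Gronwall estimate on $\mathbb{E}\bigl(\sup_{\tau\le\varrho\le t}\|\hat{u}(\varrho)-\hat{v}(\varrho)\|^2\bigr)$, using \eqref{dis-property} in the form $\rho(\mu(s),\tilde{\mu}(s))^2\le\mathbb{E}\|\hat{u}(s)-\hat{v}(s)\|^2$ to close the loop; this proves $\mu=\tilde{\mu}$ directly, after which weak uniqueness of the now-common frozen equation finishes the job. Your route costs one extra Gronwall computation (essentially Step~2 of Proposition~\ref{Pro1} with an added Wasserstein term, which your coupling inequality absorbs), but it is more self-contained: it never needs to recognize the intermediate process as a McKean--Vlasov solution or reapply pathwise uniqueness of the full distribution-dependent equation. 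Both arguments are sound; just make sure, when you invoke $\mathcal{L}_{\hat{u}(s)}=\mu(s)$ and $\mathcal{L}_{\hat{v}(s)}=\tilde{\mu}(s)$, that you state the weak uniqueness of the frozen equation for arbitrary initial distributions in $\mathcal{P}_2(\mathcal{C}_r)$, since $u$ and $\hat{u}$ live on different bases.
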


\begin{proof}
The proof follows a similar manner to \cite{Huang2019}. For the convenience of the reader, we outline the key points as follows.
Suppose that  $(\{u(t)\}_{t \ge \tau-r}, \{W(t)\}_{t \ge \tau})$ is a weak solution under the stochastic basis
$(\Omega, \mathcal{F}, \{\mathcal{F}_t\}_{t \ge \tau}, \mathbb{P})$ and
$(\{\tilde{u}(t)\}_{t \ge \tau-r}, \{\tilde{W}(t)\}_{t \ge \tau})$ is a weak solution under the stochastic basis
$(\tilde{\Omega}, \tilde{\mathcal{F}}, \{\tilde{\mathcal{F}_t}\}_{t \ge \tau}, \tilde{\mathbb{P}})$
to system \eqref{ob-3}, both with the same initial distribution
$\mathcal{L}_{u_0}|_{\mathbb{P}}=\mathcal{L}_{\tilde{u}_0}|_{\tilde{\mathbb{P}}}$.
To establish weak uniqueness,
it is sufficient to prove that $\mathcal{L}_{u(\cdot)}|_{\mathbb{P}}=\mathcal{L}_{\tilde{u}(\cdot)}|_{\tilde{\mathbb{P}}}$.
Notice that $\tilde{u}$ satisfies the following stochastic differential equation:
\begin{equation}\label{weakunique-1}
\begin{split}
&\; d\tilde{u}(t)+\nu A \tilde{u}(t) dt+\lambda \tilde{u}(t) dt
\\
&= f(t, \tilde{u}(t),\tilde{u}(t-r), \mathcal{L}_{\tilde{u}(t)}) dt
+\tilde{\sigma}(t, \tilde{u}(t), \tilde{u}(t-r), \mathcal{L}_{\tilde{u}(t)})d\tilde{W}(t).
\end{split}
\end{equation}
Letting $\mathcal{L}_{u(t)}|_{\mathbb{P}}=\mu_t$, we consider the stochastic differential equation
\begin{equation}\label{weakunique-2}
\begin{split}
&\; d\bar{u}(t)+\nu A \bar{u}(t) dt+\lambda \bar{u}(t) dt
\\
&=\; f(t, \bar{u}(t),\bar{u}(t-r), \mu_t) dt+\tilde{\sigma}(t,\bar{u}(t),\bar{u}(t-r), \mu_t)d\tilde{W}(t),
\,\, \bar{u}_\tau=\tilde{u}_\tau.
\end{split}
\end{equation}
By using the usual arguments, we find that equation \eqref{weakunique-2} admits a unique strong solution. Using the Yamada-Watanabe
theorem, one has that the weak solution of equation \eqref{weakunique-2} is unique. Notice that
\begin{equation*}
\begin{split}
&\; du(t)+\nu A u(t) dt+\lambda u(t) dt
\\
&=\; f(t, u(t),u(t-r), \mu_t) dt+\tilde{\sigma}(t, u(t),u(t-r), \mu_t)dW(t), \,\, \mathcal{L}_{u_\tau}|_{\mathbb{P}}=\mathcal{L}_{\tilde{u}_\tau}|_{\tilde{\mathbb{P}}}.
\end{split}
\end{equation*}
It then follows that
\begin{eqnarray*}
\mathcal{L}_{u(\cdot)}|_{\mathbb{P}}=\mathcal{L}_{\bar{u}(\cdot)}|_{\tilde{\mathbb{P}}}.
\end{eqnarray*}
In view of this fact, we rewrite \eqref{weakunique-2} as
\begin{equation*}
\begin{split}
&\, d\bar{u}(t)+\nu A \bar{u}(t) dt+\lambda \bar{u}(t) dt
\\
&=\, f(t, \bar{u}(t),\bar{u}(t-r), \mathcal{L}_{\tilde{u}(\cdot)}|_{\tilde{\mathbb{P}}}) dt+\tilde{\sigma}(t, \bar{u}(t),\bar{u}(t-r), \mathcal{L}_{\tilde{u}(\cdot)}|_{\tilde{\mathbb{P}}})d\tilde{W}(t),
\,\, \bar{u}_\tau=\tilde{u}_\tau.
\end{split}
\end{equation*}
Then by the strong uniqueness of equation \eqref{weakunique-1}, we obtain $\tilde{u}=\bar{u}$. Therefore
\begin{eqnarray*}
\mathcal{L}_{\bar{u}(\cdot)}|_{\tilde{\mathbb{P}}}=\mathcal{L}_{\tilde{u}(\cdot)}|_{\tilde{\mathbb{P}}}=\mathcal{L}_{u(\cdot)}|_{\mathbb{P}}.
\end{eqnarray*}
Then the proof of Proposition \ref{weak-uniquess} is complete.
\end{proof}

\section{uniform estimates of solutions}

This section is devoted to deriving various uniform estimates for the solutions of equation \eqref{ob-3}, which will be used to prove the existence of pullback measure attractors.

We denote by $u(t, \tau-t, u_{\tau-t})$ the solution of \eqref{ob-3} with initial condition $u_{\tau-t} \in \mathcal{C}_r$.
In this part,  we first present the uniform estimates of the solutions in $L^2(\Omega, \mathbb{P})$.
\begin{lm} \label{est-1}
Assume that {\bf (H1)}-{\bf (H3)} and \eqref{lamda} hold.
Then for every $ \tau \in \mathbb{R}$ and $D=\{D ( t ) : t \in \mathbb{R}\} \in \mathcal{D}_0$,
there exists $T=T(\tau, D )>r$, such that for all $t \ge T$, the solution $u $ of \eqref{ob-3}-\eqref{ob-4} satisfies
\begin{equation} \label{est1-1}
\begin{split}
& \, \mathbb{E} \left (\|u (\tau, \tau-t, u_{\tau -t} ) \|^2 \right)+\mathbb{E}\int_ {\tau-t}^{\tau} e^{\varepsilon (s-\tau)}\|u(s, \tau-t, u_{\tau-t})\|^2 ds
\\
&\, +2\alpha \mathbb{E}(\int_{\tau}^{t}e^{\varepsilon (s-\tau)}\|u (\tau, \tau-t, u_{\tau -t} )\|_p^pds)
 \\
& \le  M_1+ M_1\int_{-\infty }^\tau e^{\varepsilon(s-\tau)}\left(\|g(s)\|^2+\|\eta(s)\|_1 \right)ds,
\end{split}
 \end{equation}
where $u_{\tau-t}\in L^2(\Omega, \mathcal{F}_{\tau-t}, \mathcal{C}_r)$ with $\mathcal{L}_{u_{\tau-t}} \in D(\tau-t)$,
$\varepsilon>0$ is the same number as in \eqref{lamda-1}, and $M_1>0$ is a constant independent of $\tau$ and $D$.
\end{lm}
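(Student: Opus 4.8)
\emph{Proof plan.} Write $u(s)=u(s,\tau-t,u_{\tau-t})$ for $s\in[\tau-t-r,\tau]$ and recall from Proposition \ref{Pro1} that $u\in L^2(\Omega,C([\tau-t-r,\tau],\ell^2))$. The plan is a direct exponentially weighted energy estimate. Applying the It\^o formula to $e^{\varepsilon(s-\tau)}\|u(s)\|^2$ on $[\tau-t,\tau]$, discarding the nonpositive term $-2\nu e^{\varepsilon(s-\tau)}\|Bu(s)\|^2$ coming from $A=BB^*$, and taking expectations (the stochastic integral $2\int_{\tau-t}^{\cdot}e^{\varepsilon(s-\tau)}(u(s),\tilde\sigma(s)\,dW(s))$ is a genuine mean-zero martingale by \eqref{bound-2} and the $L^2$-bound above), one obtains
\[
\mathbb E\|u(\tau)\|^2+(2\lambda-\varepsilon)\,\mathbb E\!\int_{\tau-t}^{\tau}\!e^{\varepsilon(s-\tau)}\|u(s)\|^2\,ds
\le e^{-\varepsilon t}\,\mathbb E\|u_{\tau-t}(0)\|^2+\mathbb E\!\int_{\tau-t}^{\tau}\!e^{\varepsilon(s-\tau)}\Big[2\big(u(s),f(s)\big)+2\big(u(s),g(s)\big)+\|\tilde\sigma(s)\|_{L_2(\ell^2,\ell^2)}^2\Big]ds,
\]
where $f(s)=f(s,u(s),u(s-r),\mathcal L_{u(s)})$ and $\tilde\sigma(s)=\tilde\sigma(s,u(s),u(s-r),\mathcal L_{u(s)})$. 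No BDG inequality is needed here, since only $\mathbb E\|u(\tau)\|^2$, not a supremum, occurs in \eqref{est1-1}.

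For the drift, the dissipativity bound \eqref{fi-1}, summed over $i\in\mathbb Z$ together with $\sum_i\eta_i(s)(\cdots)\le\|\eta\|_{L^\infty(\mathbb R,\ell^2)}(\cdots)$ and $\sum_i\eta_i(s)=\|\eta(s)\|_1$, gives
\[
2\big(u(s),f(s)\big)\le -2\alpha\|u(s)\|_p^p+2\|\eta(s)\|_1+2\|\eta\|_{L^\infty(\mathbb R,\ell^2)}\big(\|u(s)\|^2+\|u(s-r)\|^2+\rho(\mathcal L_{u(s)},\hat\delta_0)^2\big),
\]
while \eqref{bound-2} yields the parallel bound on $\|\tilde\sigma(s)\|_{L_2(\ell^2,\ell^2)}^2$ with constant $6\|\chi\|_{L^\infty(\mathbb R,\ell^2)}^2$ and forcing term $2\|\kappa(s)\|^2$, and $2(u(s),g(s))\le\varepsilon\|u(s)\|^2+\varepsilon^{-1}\|g(s)\|^2$. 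The step that makes the McKean--Vlasov terms harmless is \eqref{dis-property}: after taking expectations, $\mathbb E\,\rho(\mathcal L_{u(s)},\hat\delta_0)^2=\sum_i\mathbb W_2^{\mathbb R}(\mathcal L_{u_i(s)},\delta_0)^2\le\mathbb E\|u(s)\|^2$, so each distribution-dependent term contributes exactly an extra copy of $\mathbb E\|u(s)\|^2$.

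The single delay-specific manipulation is the shift $s\mapsto s-r$ in $\mathbb E\int_{\tau-t}^\tau e^{\varepsilon(s-\tau)}\|u(s-r)\|^2ds= e^{\varepsilon r}\,\mathbb E\int_{\tau-t-r}^{\tau-r}e^{\varepsilon(s-\tau)}\|u(s)\|^2ds$, which produces the factor $e^{\varepsilon r}\le e^{2\varepsilon r}$ (the form appearing in \eqref{lamda-1}) plus a history contribution over $[\tau-t-r,\tau-t]$ bounded by $r\,e^{\varepsilon r}e^{-\varepsilon t}\,\mathbb E\|u_{\tau-t}\|_{\mathcal C_r}^2$. Collecting all the above, the coefficient multiplying $\mathbb E\int_{\tau-t}^\tau e^{\varepsilon(s-\tau)}\|u(s)\|^2ds$ on the right becomes $-(2\lambda-\varepsilon)+\varepsilon+2\|\eta\|_{L^\infty(\mathbb R,\ell^2)}(2+e^{2\varepsilon r})+6\|\chi\|_{L^\infty(\mathbb R,\ell^2)}^2(2+e^{2\varepsilon r})$, which by \eqref{lamda-1} is $\le-2\varepsilon$ (the condition in fact leaves ample room). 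Hence both $2\varepsilon\,\mathbb E\int e^{\varepsilon(s-\tau)}\|u(s)\|^2ds$ and the good term $2\alpha\,\mathbb E\int e^{\varepsilon(s-\tau)}\|u(s)\|_p^p\,ds$ can be transferred to the left-hand side; since the $\ell^p$-integral is a priori only known to be nonnegative, this is made rigorous by first running the It\^o computation up to $\tau_R=\inf\{s\ge\tau-t:\|u(s)\|>R\}$ (or against a smooth truncation of $\|\cdot\|^2$) and letting $R\to\infty$ via Fatou's lemma.

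It remains to bound the right-hand side. By \eqref{g^2}, $\mathbb E\int_{\tau-t}^\tau e^{\varepsilon(s-\tau)}(\|g(s)\|^2+\|\eta(s)\|_1)\,ds\le\int_{-\infty}^\tau e^{\varepsilon(s-\tau)}(\|g(s)\|^2+\|\eta(s)\|_1)\,ds<\infty$, and $\mathbb E\int_{\tau-t}^\tau e^{\varepsilon(s-\tau)}\|\kappa(s)\|^2ds\le\varepsilon^{-1}\|\kappa\|_{L^\infty(\mathbb R,\ell^2)}^2$. For the initial datum, since $\mathcal L_{u_{\tau-t}}\in D(\tau-t)$ with $D\in\mathcal D_0$,
\[
(1+r\,e^{\varepsilon r})\,e^{-\varepsilon t}\,\mathbb E\|u_{\tau-t}\|_{\mathcal C_r}^2\le(1+r\,e^{\varepsilon r})\,e^{-\varepsilon\tau}\Big(e^{\varepsilon(\tau-t)}\,\|D(\tau-t)\|_{\mathcal P_2(\mathcal C_r)}^2\Big)\longrightarrow 0\qquad(t\to\infty),
\]
so there exists $T=T(\tau,D)>r$ such that this term is $\le1$ for every $t\ge T$; absorbing all remaining constants into one $M_1>0$ independent of $\tau$ and $D$ yields \eqref{est1-1}. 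The main obstacle is the careful coordinatewise bookkeeping of the $\ell^2$-norms of $\eta,\chi,\kappa$ together with the delay shifts, arranged so that the net coefficient of the $\mathbb E\|u(s)\|^2$-integral is driven negative by precisely the largeness assumption \eqref{lamda-1} on $\lambda$; everything else is routine.
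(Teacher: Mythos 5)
Your proposal is correct and follows essentially the same route as the paper: the exponentially weighted It\^o energy identity for $e^{\varepsilon(s-\tau)}\|u(s)\|^2$, the dissipativity bound \eqref{fi-1} together with \eqref{bound-2} and \eqref{dis-property} to absorb the distribution-dependent terms, the $s\mapsto s-r$ shift for the delay, absorption via \eqref{lamda-1}, and the $\mathcal{D}_0$-decay of the initial data to choose $T$. The only differences are cosmetic (Young's inequality with $\varepsilon$ rather than $\lambda$ for the $g$-term, and your explicit stopping-time localization, which the paper leaves implicit).
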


\begin{proof}
For any $u_\tau \in \mathcal{C}_r$, we denote by $u(t)=u(t, \tau, u_\tau)$ the solution of \eqref{ob-3}-\eqref{ob-4}
with initial data $u_\tau$ at initial time $\tau$ for brevity.

Following the It\^o formula, we obtain the following for all for all $ t\ge \tau $
\begin{equation}\label{uniform-esti-2}
  \begin{split}
  &\, \mathbb{E}(e^{\varepsilon t}\|u(t)\|^2)
  +2\nu \mathbb{E} \int_{\tau}^t e^{\varepsilon s} \|Bu(s)\|^2 ds
  +(2\lambda-\varepsilon) \mathbb{E}\int_ {\tau}^t e^{\varepsilon s}\|u( s )\|^2 ds
  \\
  =&\,  e^{\varepsilon \tau} \mathbb{E} \|u_ {\tau}(0)\|^2+2 \mathbb{E} \int_{\tau}^t e^{\varepsilon s}
  (f(s, u(s), u(s-r), \mathcal{L}_{u(s)}), u(s)) ds
  \\
  &+ 2 \mathbb{E} \int_ {\tau}^t e^{\varepsilon s}(g(s), u(s)) ds
 +\mathbb{E} \int_{\tau}^t e^{\varepsilon s}\|\tilde{\sigma}(s, u  (s), u(s-r), \mathcal{L}_{u(s)})\|_{L_2(\ell^2,\ell^2)}^2 ds.
   \end{split}
\end{equation}

Next, we estimate each term on the right-hand side of \eqref{uniform-esti-2} separately.
For the second term,  using \eqref{fi-1} we have

\begin{equation}\label{uniform-esti-3}
\begin{split}
& 2 \mathbb{E} \int_ {\tau}^{t} e^{\varepsilon s}(f(s, u(s), u(s-r), \mathcal{L}_{u(s)}), u(s)) ds
\\
\le &
-2\alpha \mathbb{E}(\int_ { \tau} ^ {t} e^{\varepsilon s}\| u(s) \|_p^p ds)
+2 \mathbb{E}(\int_ {\tau}^{t}e^{\varepsilon s} \|\eta(s)\|_1 ds)
\\
&+2 \mathbb{E}(\int_ {\tau}^{t}e^{\varepsilon s} \|\eta(s)\|_{\infty}(\|u(s)\|^2+ \|u(s-r)\|^2
+\mathbb{E}\|u(s)\|^2) ds)
\\
\le & -2\alpha \mathbb{E}(\int_{\tau}^{t}e^{\varepsilon s}\|u(s)\|_p^pds)
 +2 \int_ {\tau}^{t}e^{\varepsilon s} \|\eta(s)\|_1 ds
 \\
 &  +2\|\eta\|_{L^{\infty}(\mathbb{R},\ell^2)}(2+e^{\varepsilon r})\int_{\tau}^{t}e^{\varepsilon s} \mathbb{E}\|u(s)\|^2 ds
 +2 \varepsilon^{-1} e^{\varepsilon (r+\tau)} \|\eta\|_{L^{\infty}(\mathbb{R},\ell^2)} \mathbb{E} \|u_\tau\|_{\mathcal{C}_r}^2.
\end{split}
\end{equation}
For the third term, we have
\begin{equation}\label{uniform-esti-5}
\begin{split}
 2 \mathbb{E} \int_ {\tau}^{t} e^{\varepsilon s}(g(s), u(s)) ds
\le  \frac{1}{\lambda} \int_ {\tau}^{t} e^{\varepsilon s}\|g(s)\|^2 ds
+\lambda \int_{\tau}^t  e^{\varepsilon s} \mathbb{E} \|u(s)\|^2 ds.
\end{split}
\end{equation}
For the fourth term,  using \eqref{dis-property} and \eqref{bound-2} we have
\begin{equation}\label{uniform-esti-6}
\begin{split}
& \mathbb{E} \int_{\tau}^{t} e^{\varepsilon s}\|\tilde{\sigma}(s, u  (s), u(s-r), \mathcal{L}_{u(s)})\|_{L_2(\ell^2,\ell^2)}^2 ds
\\
\le & 6  \|\chi\|^2_{L^{\infty}(\mathbb{R},\ell^2)}(2+e^{\varepsilon r})\int_{\tau}^t e^{\varepsilon s} \mathbb{E}\|u(s)\|^2 ds
+ 6  \varepsilon^{-1} e^{\varepsilon (\tau+r)} \|\chi\|^2_{L^{\infty}(\mathbb{R},\ell^2)} \mathbb{E}\|u_{\tau}\|_{\mathcal{C}_r}^2
\\
&+2 \int_{\tau}^t e^{\varepsilon s} \|\kappa(s)\|^2 ds.
\end{split}
\end{equation}
It then follows from \eqref{uniform-esti-2}-\eqref{uniform-esti-6} that
\begin{equation}\label{uniform-esti-7}
\begin{split}
& \mathbb{E}(e^{\varepsilon t}\|u(t)\|^2)
  +2\nu \mathbb{E} \int_{\tau}^{t} e^{\varepsilon s} \|Bu(s)\|^2 ds
  +(2\lambda-\varepsilon) \mathbb{E}\int_ {\tau}^{t} e^{\varepsilon s}\|u( s )\|^2 ds
\\
\le & \, -2\alpha \mathbb{E}(\int_ { \tau} ^ {t} e^{\varepsilon s}\| u(s) \|_p^p ds)+(1+2 \varepsilon^{-1} e^{\varepsilon r} \|\eta\|_{L^{\infty}(\mathbb{R},\ell^2)}
+ 6 \varepsilon^{-1} e^{\varepsilon r} \|\chi\|^2_{L^{\infty}(\mathbb{R},\ell^2)})e^{\varepsilon \tau} \mathbb{E} \|u_ {\tau}\|_{\mathcal{C}_r}^2 
\\
&\, +\left[ 2\|\eta\|_{L^{\infty}(\mathbb{R},\ell^2)}(2+e^{\varepsilon r})+\lambda
+6\|\chi\|^2_{L^{\infty}(\mathbb{R},\ell^2)}(2+e^{\varepsilon r})\right] \int_{\tau}^{t}e^{\varepsilon s} \mathbb{E}\|u(s)\|^2 ds
\\
&\, +2 \int_ {\tau}^{t}e^{\varepsilon s} \|\eta(s)\|_1 ds+\frac{1}{\lambda}\int_ {\tau}^{t} e^{\varepsilon s}\|g(s)\|^2 ds
+2 \int_{\tau}^t e^{\varepsilon s} \|\kappa(s)\|^2 ds.
\end{split}
\end{equation}
By \eqref{lamda} we get that
for all $t\ge \tau$,
\begin{equation}\label{uniform-esti-8}
\begin{split}
& \mathbb{E}\|u(t)\|^2
  +2\nu \mathbb{E} \int_{\tau}^{t} e^{\varepsilon (s-t)} \|Bu(s)\|^2 ds
  +\varepsilon \mathbb{E}\int_ {\tau}^{t} e^{\varepsilon (s-t)}\|u( s )\|^2 ds
  \\
  &\, +2\alpha \mathbb{E}(\int_ { \tau} ^ {t} e^{\varepsilon (s-t)}\| u(s) \|_p^p ds)
\\
\le & \, (1+2 \varepsilon^{-1} e^{\varepsilon r} \|\eta\|_{L^{\infty}(\mathbb{R},\ell^2)}
+ 6\varepsilon^{-1} e^{\varepsilon r} \|\chi\|^2_{L^{\infty}(\mathbb{R},\ell^2)})e^{\varepsilon (\tau-t)} \mathbb{E} \|u_ {\tau}\|_{\mathcal{C}_r}^2
\\
&\, +2 \int_ {\tau}^{t}e^{\varepsilon (s-t)} \|\eta(s)\|_1 ds+\frac{1}{\lambda}\int_ {\tau}^{t} e^{\varepsilon (s-t)}\|g(s)\|^2 ds
+2\int_{\tau}^t e^{\varepsilon (s-t)} \|\kappa(s)\|^2 ds.
\end{split}
\end{equation}
 Replacing $\tau$ and $t$ in \eqref{uniform-esti-8} by $\tau-t$ and $\tau$, respectively, we get that
\begin{equation}\label{uniform-esti-9}
\begin{split}
& \mathbb{E}\|u(\tau, \tau-t, u_{\tau-t})\|^2
  +2\nu \mathbb{E} \int_{\tau-t}^{\tau} e^{\varepsilon (s-\tau)} \|Bu(s, \tau-t, u_{\tau-t})\|^2 ds
    \\
  & +\varepsilon \mathbb{E}\int_ {\tau-t}^{\tau} e^{\varepsilon (s-\tau)}\|u(s, \tau-t, u_{\tau-t})\|^2 ds
  +2\alpha \mathbb{E}(\int_ { \tau-t} ^ {\tau} e^{\varepsilon (s-\tau)}\|u(s, \tau-t, u_{\tau-t})\|_p^p ds)
\\
\le & \, (1+2 \varepsilon^{-1} e^{\varepsilon r} \|\eta\|_{L^{\infty}(\mathbb{R},\ell^2)}
+ 6\varepsilon^{-1} e^{\varepsilon r} \|\chi\|^2_{L^{\infty}(\mathbb{R},\ell^2)})e^{- \varepsilon t}
\mathbb{E} \|u_ {\tau-t}\|_{\mathcal{C}_r}^2
\\
&\, +2 \int_ {\tau-t}^{\tau}e^{\varepsilon (s-\tau)} \|\eta(s)\|_1 ds+\frac{1}{\lambda} \int_ {\tau-t}^{\tau} e^{\varepsilon (s-\tau)}\|g(s)\|^2 ds
+2\int_{\tau-t}^\tau e^{\varepsilon (s-\tau)} \|\kappa(s)\|^2 ds.
\end{split}
\end{equation}
In view of facts that $\mathcal{L}_{u_{\tau-t}} \in D(\tau-t)$ and $D \in \mathcal{D}_0$, we have that
\begin{eqnarray*}
e^{- \varepsilon t} \mathbb{E} \|u_ {\tau-t}\|_{\mathcal{C}_r}^2
\!\!\!&\le&\!\!\!
e^{-\varepsilon \tau} \left[ e^{\varepsilon(\tau-t)}\|D(\tau-t)\|_{\mathcal{P}_2(\mathcal{C}_r)}^2\right] \rightarrow 0
\end{eqnarray*}
as $t \rightarrow \infty$. Hence, there exists $T=T(\tau, D)>r$ such that $t \ge T$,
\begin{eqnarray*}
e^{- \varepsilon t} \mathbb{E} \|u_ {\tau-t}\|_{\mathcal{C}_r}^2 \le 1.
\end{eqnarray*}
Along with \eqref{uniform-esti-9}, for all $t \ge T$, by \eqref{g^2} we obtain
\begin{equation*}
\begin{split}
& \mathbb{E}\|u(\tau, \tau-t, u_{\tau-t})\|^2
  +2\nu \mathbb{E} \int_{\tau-t}^{\tau} e^{\varepsilon (s-\tau)} \|Bu(s, \tau-t, u_{\tau-t})\|^2 ds
    \\
  & +\varepsilon \mathbb{E}\int_ {\tau-t}^{\tau} e^{\varepsilon (s-\tau)}\|u(s, \tau-t, u_{\tau-t})\|^2 ds+ 2\alpha \mathbb{E}(\int_{\tau-t}^{\tau}e^{\varepsilon (s-\tau)}\|u(s, \tau-t, u_{\tau-t})\|_p^pds)
\\
\le & \, 1+2 \varepsilon^{-1} e^{\varepsilon r} \|\eta\|_{L^{\infty}(\mathbb{R},\ell^2)}
+ 6\varepsilon^{-1} e^{\varepsilon r} \|\chi\|^2_{L^{\infty}(\mathbb{R},\ell^2)}+2\varepsilon^{-1} \|\kappa\|^2_{L^{\infty}(\mathbb{R}, \ell^2)}
\\
&\, +2 \int_ {-\infty}^{\tau}e^{\varepsilon (s-\tau)} \|\eta(s)\|_1 ds+\frac{1}{\lambda} \int_ {-\infty}^{\tau} e^{\varepsilon (s-\tau)}\|g(s)\|^2 ds
.
\end{split}
\end{equation*}
This completes the proof of Lemma \ref{est-1}.
\end{proof}

Next, we derive the uniform estimates for segments of the solutions in $L^2(\Omega, \mathcal{C}_r)$.
\begin{lm} \label{est-1+}
Assume that {\bf (H1)}-{\bf (H3)} and \eqref{lamda} hold.
Then, for every $ \tau \in \mathbb{R}$ and $D=\{D ( t ) : t \in \mathbb{R}\} \in \mathcal{D}_0$,
there exists $T=T(\tau, D )>r$, such that for all $t \ge T$, the solution $u $ of \eqref{ob-3}-\eqref{ob-4} satisfies
\begin{equation} \label{est1-1+}
\begin{split}
 \mathbb{E} \left (\|u_\tau(\cdot, \tau-t, u_{\tau -t} ) \|_{\mathcal{C}_r}^2 \right)
\le  M_2+ M_2\int_{-\infty }^\tau e^{\varepsilon(s-\tau)}\left(\|g(s)\|^2+\|\eta(s)\|_1 \right)ds,
\end{split}
 \end{equation}
where $u_{\tau-t}\in L^2(\Omega, \mathcal{F}_{\tau-t}, \mathcal{C}_r)$ with $\mathcal{L}_{u_{\tau-t}} \in D(\tau-t)$,
$\varepsilon>0$ is the same number as in \eqref{lamda-1}, and $M_2>0$ is a constant independent of $\tau$ and $D$.
\end{lm}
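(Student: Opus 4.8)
The plan is to upgrade the fixed-time bound \eqref{est1-1} of Lemma \ref{est-1} into a bound on the running supremum $\mathbb{E}\bigl(\sup_{\varrho\in[\tau-t,\tau]}\|u(\varrho)\|^2\bigr)$, which yields \eqref{est1-1+} at once since $\|u_\tau\|_{\mathcal{C}_r}^2=\sup_{\varrho\in[\tau-r,\tau]}\|u(\varrho)\|^2$. Write $u(s)=u(s,\tau-t,u_{\tau-t})$. First I would apply the It\^o formula to $e^{\varepsilon s}\|u(s)\|^2$ on $[\tau-t,\varrho]$ exactly as in the derivation of \eqref{uniform-esti-2}, and then take $\sup_{\varrho\in[\tau-t,\tau]}$ followed by expectation. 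Discarding the nonpositive terms $-2\nu e^{\varepsilon s}\|Bu(s)\|^2$ and, after \eqref{fi-1}, $-2\alpha e^{\varepsilon s}\|u(s)\|_p^p$, and estimating the inner product $(f(s,u(s),u(s-r),\mathcal{L}_{u(s)}),u(s))$, the term $(g(s),u(s))$ and $\|\tilde\sigma(s,u(s),u(s-r),\mathcal{L}_{u(s)})\|_{L_2(\ell^2,\ell^2)}^2$ by \eqref{fi-1}, Young's inequality and \eqref{bound-2} together with \eqref{dis-property} (so that the law-dependent contributions become $\mathbb{E}\|u(s)\|^2$) as in \eqref{uniform-esti-3}--\eqref{uniform-esti-6}, this reduces the whole estimate to controlling the martingale term $2\,\mathbb{E}\sup_{\varrho\in[\tau-t,\tau]}\bigl|\int_{\tau-t}^\varrho e^{\varepsilon s}\bigl(u(s),\tilde\sigma(s,u(s),u(s-r),\mathcal{L}_{u(s)})\,dW(s)\bigr)\bigr|$.

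For this term I would invoke the Burkholder--Davis--Gundy inequality with the constant $c_1$ of Proposition \ref{Pro1} to bound it by $2c_1\,\mathbb{E}\bigl(\int_{\tau-t}^\tau e^{2\varepsilon s}\|u(s)\|^2\,\|\tilde\sigma(s,u(s),u(s-r),\mathcal{L}_{u(s)})\|_{L_2(\ell^2,\ell^2)}^2\,ds\bigr)^{1/2}$. Since $e^{2\varepsilon s}\|u(s)\|^2\le e^{\varepsilon s}\sup_{\varrho\in[\tau-t,\tau]}\bigl(e^{\varepsilon\varrho}\|u(\varrho)\|^2\bigr)$ for $s\le\tau$, I pull the supremum out of the integral and then apply the Cauchy--Schwarz and Young inequalities to bound this by $\tfrac12\,\mathbb{E}\sup_{\varrho\in[\tau-t,\tau]}\bigl(e^{\varepsilon\varrho}\|u(\varrho)\|^2\bigr)+2c_1^2\,\mathbb{E}\int_{\tau-t}^\tau e^{\varepsilon s}\|\tilde\sigma(s,u(s),u(s-r),\mathcal{L}_{u(s)})\|_{L_2(\ell^2,\ell^2)}^2\,ds$. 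The first summand is absorbed into the left-hand side, and the second, by \eqref{bound-2} and \eqref{dis-property}, is again a linear combination of $\mathbb{E}\int_{\tau-t}^\tau e^{\varepsilon s}\|u(s)\|^2\,ds$, $\mathbb{E}\int_{\tau-t}^\tau e^{\varepsilon s}\|u(s-r)\|^2\,ds$ and $\int_{\tau-t}^\tau e^{\varepsilon s}\|\kappa(s)\|^2\,ds$. In particular no $L^4$ information on the solution is needed here, precisely because $(f,u)$ and $\|\tilde\sigma\|_{L_2(\ell^2,\ell^2)}^2$ are controlled linearly in $\|u\|^2$.

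Collecting terms, $\tfrac12\,\mathbb{E}\sup_{\varrho\in[\tau-t,\tau]}\bigl(e^{\varepsilon\varrho}\|u(\varrho)\|^2\bigr)$ is dominated by $e^{\varepsilon(\tau-t)}\mathbb{E}\|u(\tau-t)\|^2$ plus $e^{\varepsilon\tau}$ times a fixed linear combination of $\varepsilon^{-1}\|\kappa\|_{L^\infty(\mathbb{R},\ell^2)}^2$, of $\mathbb{E}\int_{\tau-t}^\tau e^{\varepsilon(s-\tau)}\|u(s)\|^2\,ds$, of $\mathbb{E}\int_{\tau-t}^\tau e^{\varepsilon(s-\tau)}\|u(s)\|_p^p\,ds$, of $\mathbb{E}\int_{\tau-t}^\tau e^{\varepsilon(s-\tau)}\|u(s-r)\|^2\,ds$, and of $\int_{-\infty}^\tau e^{\varepsilon(s-\tau)}(\|g(s)\|^2+\|\eta(s)\|_1)\,ds$. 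By Lemma \ref{est-1} there is $T=T(\tau,D)>r$ such that for $t\ge T$ both $\mathbb{E}\int_{\tau-t}^\tau e^{\varepsilon(s-\tau)}\|u(s)\|^2\,ds$ and $\mathbb{E}\int_{\tau-t}^\tau e^{\varepsilon(s-\tau)}\|u(s)\|_p^p\,ds$ are $\le C\bigl(1+\int_{-\infty}^\tau e^{\varepsilon(s-\tau)}(\|g\|^2+\|\eta\|_1)\,ds\bigr)$; the delay integral is handled by the substitution $s\mapsto s+r$, namely $\mathbb{E}\int_{\tau-t}^\tau e^{\varepsilon(s-\tau)}\|u(s-r)\|^2\,ds=e^{\varepsilon r}\mathbb{E}\int_{\tau-t-r}^{\tau-r}e^{\varepsilon(s-\tau)}\|u(s)\|^2\,ds$, whose part over $[\tau-t,\tau-r]\subseteq[\tau-t,\tau]$ is again bounded via Lemma \ref{est-1} and whose part over the initial interval $[\tau-t-r,\tau-t]$ is $\le r\,e^{-\varepsilon\tau}e^{\varepsilon(\tau-t)}\|D(\tau-t)\|_{\mathcal{P}_2(\mathcal{C}_r)}^2$; the forcing integral is finite by \eqref{g^2}. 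Putting this together, there is $C$ independent of $\tau$ and $D$ such that for $t\ge T$
\begin{equation*}
\mathbb{E}\sup_{\varrho\in[\tau-t,\tau]}\bigl(e^{\varepsilon\varrho}\|u(\varrho)\|^2\bigr)\le C\,e^{\varepsilon(\tau-t)}\|D(\tau-t)\|_{\mathcal{P}_2(\mathcal{C}_r)}^2+C\,e^{\varepsilon\tau}\Bigl(1+\int_{-\infty}^\tau e^{\varepsilon(s-\tau)}(\|g(s)\|^2+\|\eta(s)\|_1)\,ds\Bigr).
\end{equation*}
Multiplying by $e^{-\varepsilon(\tau-r)}$ and using $\sup_{\varrho\in[\tau-r,\tau]}\|u(\varrho)\|^2\le e^{-\varepsilon(\tau-r)}\sup_{\varrho\in[\tau-t,\tau]}\bigl(e^{\varepsilon\varrho}\|u(\varrho)\|^2\bigr)$, the first term on the right becomes $C\,e^{\varepsilon r}e^{\varepsilon(\tau-t)}\|D(\tau-t)\|_{\mathcal{P}_2(\mathcal{C}_r)}^2$, which tends to $0$ as $t\to\infty$ because $D\in\mathcal{D}_0$, hence is $\le1$ after enlarging $T$; the second becomes $C\,e^{\varepsilon r}\bigl(1+\int_{-\infty}^\tau e^{\varepsilon(s-\tau)}(\|g\|^2+\|\eta\|_1)\,ds\bigr)$, and \eqref{est1-1+} follows with $M_2=1+C\,e^{\varepsilon r}$. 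I expect the main obstacle to be the bookkeeping rather than any single estimate: one must track the two exponential weights through the Burkholder--Davis--Gundy and absorption steps so that the final bound has exactly the claimed $\tau$-dependence, and one must control the delay term, whose only genuinely new feature is the initial-segment remainder that, as in Lemma \ref{est-1}, is precisely the quantity vanishing as $t\to\infty$ under the hypothesis $D\in\mathcal{D}_0$.
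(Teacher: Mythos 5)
Your proposal is correct and follows essentially the same route as the paper: It\^o formula applied to $e^{\varepsilon s}\|u(s)\|^2$, supremum and expectation, the delay term shifted via the weight $e^{\varepsilon r}$ with the initial-segment remainder controlled by $D\in\mathcal{D}_0$, and the stochastic integral handled by the Burkholder--Davis--Gundy inequality followed by absorption of $\tfrac12\,\mathbb{E}\sup e^{\varepsilon\varrho}\|u(\varrho)\|^2$ into the left-hand side. The only (harmless) deviation is in closing the estimate: the paper keeps $(2\lambda-\varepsilon)\int e^{\varepsilon s}\mathbb{E}\|u(s)\|^2\,ds$ on the left and absorbs the matching right-hand coefficients using \eqref{lamda}, whereas you bound those time integrals directly by invoking Lemma \ref{est-1}; both work, and the latter is in fact the strategy the paper itself uses in the tail estimate of Lemma \ref{est-2}.
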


\begin{proof}
For convenience, we denote $u(t):= u(t, \tau-t, u_{\tau-t})$ as the solution of \eqref{ob-3} with initial data $u_{\tau-t}$
at initial time $\tau-t$.

By applying the It\^{o} formula and taking the supremum and expectation of \eqref{ob-3}, we obtain for all $\varrho \ge \tau-t$,
\begin{equation} \label{est1-1+3}
 \begin{split}
 & \mathbb{E} (\sup_{\tau-t \le \varrho \le \tau} e^{\varepsilon \varrho} \|u(\varrho)\|^2)
 +(2 \lambda-\varepsilon)\int_ {\tau-t}^{\tau} e^{\varepsilon s} \mathbb{E}\|u(s)\|^2 ds
 \\
  \le & 2 e^{\varepsilon (\tau-t)} \mathbb{E}\|u_{\tau-t}(0)\|^2
  +4  \mathbb{E} \sup_{\tau-t \le \varrho \le \tau} \int_ {\tau-t}^{\varrho} e^{\varepsilon s}
 (f(s, u(s), u(s-r), \mathcal{L}_{u (s)}), u(s))ds
 \\
  & + 4  \mathbb{E} \sup_{\tau-t \le \varrho \le \tau}  \int_ {\tau-t}^{\varrho} e^{\varepsilon s}
 (g(s), u(s))ds
 \\
 & + 2 \mathbb{E} \sup_{\tau-t \le \varrho \le \tau}  \int_ {\tau-t}^{\varrho} e^{\varepsilon s}\|\tilde{\sigma}(s, u(s), u(s-r), \mathcal{L}_{u(s)})\|_{L_2(\ell^2, \ell^2)}^2 ds
 \\
 &+ 4 \mathbb{E} \sup_{\tau-t \le \varrho \le \tau}  \biggl|\int_ {\tau-t}^{\varrho} e^{\varepsilon s}\left(u(s), \tilde{\sigma}(s, u(s), u(s-r),\mathcal{L}_{u(s)}) dW(s)\right) \biggr|.
 \end{split}
 \end{equation}
Next, we estimate all terms on the right-hand side of \eqref{est1-1+3} separately. For the second term on the right-hand side of
\eqref{est1-1+3}, by \eqref{fi-1} we have
\begin{equation}\label{est1-1+4}
\begin{split}
& 4  \mathbb{E} \sup_{\tau-t \le \varrho \le \tau} \int_ {\tau-t}^{\varrho} e^{\varepsilon s}
 (f(s, u(s), u(s-r), \mathcal{L}_{u (s)}),  u(s))ds
\\
\le & \,  -4 \alpha \mathbb{ E} \int_{\tau-t}^\tau e^{\varepsilon s}\|u(s)\|_p^p ds+4 \mathbb{ E} \int_{\tau-t}^\tau e^{\varepsilon s} \|\eta(s)\|_1 ds
\\
&\, +4 \mathbb{ E} \int_{\tau-t}^\tau e^{\varepsilon s} \|\eta(s)\|(\|u(s)\|^2+\|u(s-r)\|^2+  \mathbb{E}\|u(s)\|^2)ds
\\
\le & \,  4\int_{\tau-t}^\tau e^{\varepsilon s}\|\eta(s)\|_1 ds
+ 4\|\eta\|_{L^{\infty}(\mathbb{R},\ell^2)}(2+e^{\varepsilon r})\int_{\tau-t}^\tau e^{\varepsilon s} \mathbb{E} \|u(s)\|^2 ds
\\
&\,+4 \varepsilon^{-1} e^{\varepsilon(\tau-t+r)} \mathbb{E} \|u_{\tau-t}\|_{\mathcal{C}_r}^2.
\end{split}
\end{equation}
For the third term on the right-hand side of \eqref{est1-1+3}, we have
\begin{equation}\label{est1-1+5}
\begin{split}
4  \mathbb{E} \sup_{\tau-t \le \varrho \le \tau}  \int_ {\tau-t}^{\varrho} e^{\varepsilon s}
 (g(s), u(s))ds
\le & \frac{8}{3\lambda} \int_{\tau-t}^\tau e^{\varepsilon s}\|g(s)\|^2 ds
+\frac{3\lambda}{2}\int_{\tau-t}^\tau e^{\varepsilon s}  \mathbb{ E} \|u(s)\|^2 ds.
\end{split}
\end{equation}
For the fourth term on the right-hand side of \eqref{est1-1+3}, by \eqref{bound-2} we have
\begin{equation}\label{est1-1+6}
\begin{split}
& 2 \mathbb{E} \sup_{\tau-t \le \varrho \le \tau}  \int_ {\tau-t}^{\varrho} e^{\varepsilon s}
\|\tilde{\sigma}(s, u(s), u(s-r), \mathcal{L}_{u(s)})\|_{L_2(\ell^2, \ell^2)}^2 ds
\\
\le & \,  12 \|\chi\|^2_{L^{\infty}(\mathbb{R}, \ell^2)}(2+e^{\varepsilon r}) \int_ {\tau-t}^{\tau} e^{\varepsilon s} \mathbb{E}
\|u(s)\|^2 ds + 12 \|\chi\|^2_{L^{\infty}(\mathbb{R}, \ell^2)} \varepsilon^{-1}
e^{\varepsilon(\tau-t+r)} \mathbb{E}(\|u_{\tau-t}\|_{\mathcal{C}_r}^2)
\\
& \, +4 \int_{\tau-t}^\tau e^{\varepsilon s} \|\kappa(s)\|^2 ds.
\end{split}
\end{equation}
For the last term on the right-hand side of \eqref{est1-1+3}, similar to \eqref{pro1-4}, by \eqref{est1-1+6} we have
\begin{equation}\label{est1-1+7}
\begin{split}
&   4 \mathbb{E} \sup_{\tau-t \le \varrho \le \tau}  \biggl|\int_ {\tau-t}^{\varrho} e^{\varepsilon s}
\left(u(s), \tilde{\sigma}(s, u(s), u(s-r),\mathcal{L}_{u(s)}) dW(s)\right) \biggr|
\\
\le &\, 4c_1\mathbb{E}\{\sup_{\tau-t \le \varrho \le \tau} e^{\varepsilon \varrho}\|u(\varrho)\|^2 \cdot
(\int_{\tau-t}^\tau \|\tilde{\sigma}(s, u(s), u(s-r), \mathcal{L}_{u(s)})\|_{L_2(\ell^2, \ell^2)}^2 ds)\}^{\frac{1}{2}}
\\
\le &\, \frac{1}{2} \mathbb{E}(\sup_{\tau-t \le \varrho\le \tau}e^{\varepsilon \varrho}\|u(\varrho)\|^2)
\\
&\,+ 8 c_1^2 \int_{\tau-t}^\tau
\mathbb{E}\|\tilde{\sigma}(s, u(s), u(s-r), \mathcal{L}_{u(s)})\|_{L_2(\ell^2, \ell^2)}^2 ds
\\
\le &\,  \frac{1}{2}\mathbb{E}(\sup_{\tau-t \le \varrho\le \tau}e^{\varepsilon \varrho}\|u(\varrho)\|^2)
+48c_1^2 \|\chi\|^2_{L^{\infty}(\mathbb{R}, \ell^2)}(2+e^{\varepsilon r}) \int_ {\tau-t}^{\tau} e^{\varepsilon s} \mathbb{E}
\|u(s)\|^2 ds
\\
&\, + 48c_1^2\|\chi\|^2_{L^{\infty}(\mathbb{R}, \ell^2)} \varepsilon^{-1}
e^{\varepsilon(\tau-t+r)} \mathbb{E}(\|u_{\tau-t}\|_{\mathcal{C}_r}^2)
+16 c_1^2\int_ {\tau-t}^{\tau} e^{\varepsilon s}\|\kappa(s)\|^2 ds.
\end{split}
\end{equation}
By combining \eqref{est1-1+3} through \eqref{est1-1+7}, we obtain that
 \begin{equation*}
\begin{split}
& \mathbb{E} (\sup_{\tau-t \le \varrho \le \tau} e^{\varepsilon \varrho} \|u(\varrho)\|^2)
 +(2 \lambda-\varepsilon)\int_ {\tau-t}^{\tau} e^{\varepsilon s} \mathbb{E}\|u(s)\|^2 ds
\\
\le &\, \frac{1}{2}\mathbb{E}(\sup_{\tau-t \le \varrho\le \tau}e^{\varepsilon \varrho}\|u(\varrho)\|^2)
\\
& \, +\left[ 2+4 \varepsilon^{-1} e^{\varepsilon r}+12\|\chi\|^2_{L^{\infty}(\mathbb{R}, \ell^2)} \varepsilon^{-1} e^{\varepsilon r}(1+4c_1^2) \right]
e^{\varepsilon (\tau-t)}\mathbb{E}\|u_{\tau-t}\|_{\mathcal{C}_r}^2
\\
& +\left[ 4\|\eta\|_{L^{\infty}(\mathbb{R},\ell^2)}(2+e^{\varepsilon r})+\frac{3\lambda}{2}
+12 \|\chi\|^2_{L^{\infty}(\mathbb{R}, \ell^2)}(2+e^{\varepsilon r})(1+4c_1^2)
\right] \int_{\tau-t}^\tau e^{\varepsilon s} \mathbb{E}\|u(s)\|^2 ds
\\
&\, +4\int_{\tau-t}^\tau e^{\varepsilon s}\|\eta(s)\|_1 ds+\frac{8}{3\lambda} \int_{\tau-t}^\tau e^{\varepsilon s}\|g(s)\|^2 ds
+4 (1+4c_1^2) \int_ {\tau-t}^{\tau} e^{\varepsilon s} \|\kappa(s)\|^2 ds.
\end{split}
\end{equation*}
This, together with \eqref{lamda}, implies that

\begin{equation}\label{est1-1+8}
\begin{split}
& \mathbb{E} (\sup_{\tau-t \le \varrho \le \tau} e^{\varepsilon \varrho} \|u(\varrho)\|^2)
\\
\le & 2\left[ 2+4 \varepsilon^{-1} e^{\varepsilon r}+12 \|\chi\|^2_{L^{\infty}(\mathbb{R}, \ell^2)} \varepsilon^{-1}
e^{\varepsilon r}(1+4c_1^2)\right]
e^{\varepsilon (\tau-t)}\mathbb{E}\|u_{\tau-t}\|_{\mathcal{C}_r}^2
\\
& +8\int_{\tau-t}^\tau e^{\varepsilon s}\|\eta(s)\|_1 ds+\frac{16}{3\lambda} \int_{\tau-t}^\tau e^{\varepsilon s}\|g(s)\|^2 ds
+8(1+4c_1^2) \int_ {\tau-t}^{\tau} e^{\varepsilon s} \|\kappa(s)\|^2 ds.
\end{split}
\end{equation}
On the one hand, recalling that $\mathcal{L}_{u_{\tau-t}} \in D(\tau-t)$, we find that
\begin{eqnarray*}
e^{\varepsilon (r-t)} \mathbb{E}\|u_{\tau-t}\|_{\mathcal{C}_r}^2\!\!\!& \le &\!\!\! e^{\varepsilon (r-\tau)} \left[ e^{\varepsilon (\tau-t)}
\mathbb{E}\|D(\tau-t)\|_{\mathcal{P}_2(\mathcal{C}_r)}^2\right]
\rightarrow 0
\end{eqnarray*}
as $t \rightarrow \infty$. Thus, there exists $T=T(\tau, D)>r$ such that for all $t \ge  T$,
\begin{eqnarray}\label{est1-1+9}
e^{\varepsilon (r-t)} \mathbb{E}\|u_{\tau-t}\|_{\mathcal{C}_r}^2<1.
\end{eqnarray}
On the other hand, taking $t \ge T$, by \eqref{est1-1+8} and \eqref{est1-1+9} we have that
\begin{equation*}
\begin{split}
 \mathbb{E}(\sup_{\tau-r \le \varrho \le \tau}\|u(\varrho)\|^2)
\le \,   e^{-\varepsilon(\tau-r)}
\mathbb{E}(\sup_{\tau-t \le \varrho \le \tau} e^{\varepsilon \varrho}\|u(\varrho)\|^2)
\end{split}
\end{equation*}
\begin{equation*}
\begin{split}
\le & \; 2\left[ 2+4 \varepsilon^{-1} e^{\varepsilon r}+12\|\chi\|^2_{L^{\infty}(\mathbb{R}, \ell^2)} \varepsilon^{-1}
e^{\varepsilon r}(1+4c_1^2)\right]
e^{\varepsilon (r-t)}\mathbb{E}\|u_{\tau-t}\|_{\mathcal{C}_r}^2
\\
& +8 e^{\varepsilon r}\int_{\tau-t}^\tau e^{\varepsilon (s-\tau)}\|\eta(s)\|_1 ds
+\frac{16}{3\lambda} e^{\varepsilon r} \int_{\tau-t}^\tau e^{\varepsilon (s-\tau)}\|g(s)\|^2 ds
\\
& +8(1+4c_1^2) e^{\varepsilon r} \int_ {\tau-t}^{\tau} e^{\varepsilon (s-\tau)} \|\kappa(s)\|^2 ds
\\
\le & 2\left[ 2+4 \varepsilon^{-1} e^{\varepsilon r}+12\|\chi\|^2_{L^{\infty}(\mathbb{R}, \ell^2)} \varepsilon^{-1}
e^{\varepsilon r}(1+4c_1^2)\right]+8(1+4c_1^2)\varepsilon^{-1} e^{\varepsilon r}\|\kappa\|^2_{L^{\infty}(\mathbb{R},\ell^2)}
\\
& +8 e^{\varepsilon r}\int_{-\infty}^\tau e^{\varepsilon (s-\tau)}\|\eta(s)\|_1 ds
+\frac{16}{3\lambda} e^{\varepsilon r} \int_{-\infty}^\tau e^{\varepsilon (s-\tau)}\|g(s)\|^2 ds,
\end{split}
\end{equation*}
where the convergence of the infinite integral is guaranteed by \eqref{g^2}. Thus, the proof of Lemma \ref{est-1+} is complete.
\end{proof}

Next, we derive the uniform estimates for the tails of segments of solutions. These estimates will be crucial for addressing the non-compactness of Sobolev embeddings on unbounded domains, and will play a key role in proving the asymptotic compactness of the distributions of solutions.

\begin{lm}\label{est-2}
Assume that  {\bf  (H1)}-{\bf (H3)}, \eqref{lamda} and \eqref{g^2} hold.
Then, for every $\delta>0$, $\tau \in \mathbb{R}$ and $D=\{D(t) : t \in \mathbb{R}\} \in \mathcal{D}_0$,
there exist $T=T(\delta, \tau, D)>r$ and $N=N(\delta, \tau)\in \mathbb{N}^+$
such that for all $ t \ge T $ and $n\ge N$, the solution $u$ of \eqref{ob-3}-\eqref{ob-4} satisfies,
$$\mathbb{E}(\sup_{\tau-r \le \varrho \le \tau} \sum_{|i| \ge n} |u_i(\varrho, \tau - t, u_{ \tau -t })|^2)<\delta,$$
where
$u_{ \tau - t }  \in L^2  ( \Omega, \mathcal{F}_{ \tau - t }, \mathcal{C}_r)$
with
$\mathcal{L}_{ u_{ \tau - t } } \in  D(\tau-t)$.
\end{lm}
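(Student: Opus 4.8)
The plan is to use the cut-off (truncation) technique adapted to the delayed, distribution-dependent setting, mirroring the structure of the proof of Lemma \ref{est-1+}. I would fix a smooth $\theta:\mathbb{R}^+\to[0,1]$ with $\theta(s)=0$ for $s\in[0,1]$, $\theta(s)=1$ for $s\ge 2$ and $|\theta'|\le C_0$, set $\theta_{k,i}=\theta(|i|/k)$ for $k\in\mathbb{N}^+$, and abbreviate $u(t)=u(t,\tau-t,u_{\tau-t})$. Applying the It\^o formula to $e^{\varepsilon t}\sum_{i\in\mathbb{Z}}\theta_{k,i}|u_i(t)|^2$ on $[\tau-t,\varrho]$, then taking $\sup_{\varrho\in[\tau-t,\tau]}$ and expectation, yields an inequality of exactly the shape of \eqref{est1-1+3} with $\|u\|^2$ replaced everywhere by the weighted sum $\sum_i\theta_{k,i}|u_i|^2$ and the quantities $(f,u)$, $(g,u)$, $\|\tilde\sigma\|_{L_2(\ell^2,\ell^2)}^2$, $(u,\tilde\sigma\,dW)$ replaced by their $\theta_{k,\cdot}$-weighted analogues.

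I would then estimate the terms one by one. For the discrete-Laplacian term, writing $A=-(B+B^*)$ and summing by parts, the weight increments $\theta_{k,i-1}-\theta_{k,i}=O(1/k)$ give $-2\nu\sum_i\theta_{k,i}(Au(s))_iu_i(s)\le \frac{C\nu}{k}\|u(s)\|^2$, so this term contributes at most $\frac{C\nu}{k}\,\mathbb{E}\int_{\tau-t}^\tau e^{\varepsilon(s-\tau)}\|u(s)\|^2\,ds$, which is $\le C/k$ uniformly for $t$ large by Lemma \ref{est-1}. For the drift I would use \eqref{fi-1}: since $\theta_{k,i}\le 1$ and $\eta_i(s)\le\|\eta\|_{L^\infty(\mathbb{R},\ell^2)}$, all quadratic-in-$u$ pieces stay bounded by $\|\eta\|_{L^\infty(\mathbb{R},\ell^2)}\sum_i\theta_{k,i}(|u_i(s)|^2+|u_i(s-r)|^2+\mathbb{E}|u_i(s)|^2)$, the same form as in \eqref{est1-1+4}, leaving only the genuine tail $\sum_i\theta_{k,i}\eta_i(s)$. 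The noise term is handled the same way via \eqref{segamai-1}, producing $\|\chi\|_{L^\infty(\mathbb{R},\ell^2)}^2\sum_i\theta_{k,i}(|u_i(s)|^2+|u_i(s-r)|^2+\mathbb{E}|u_i(s)|^2)$ plus the tail $\sum_i\theta_{k,i}\kappa_i(s)^2$; the force term gives $\frac{1}{2\lambda}\sum_i\theta_{k,i}g_i(s)^2+\frac{\lambda}{2}\sum_i\theta_{k,i}|u_i(s)|^2$ by Young's inequality; and the stochastic integral is absorbed by the BDG estimate exactly as in \eqref{est1-1+7}, moving half of $\mathbb{E}(\sup_\varrho e^{\varepsilon\varrho}\sum_i\theta_{k,i}|u_i(\varrho)|^2)$ to the left. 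The delay integrals $\int_{\tau-t}^\tau e^{\varepsilon s}\sum_i\theta_{k,i}|u_i(s-r)|^2 ds$ I would split, after the shift $s\mapsto s-r$, into a piece over $[\tau-t,\tau]$ (to be absorbed) and a piece over $[\tau-t-r,\tau-t]$ bounded by $C\,e^{\varepsilon(\tau-t)}\mathbb{E}\|u_{\tau-t}\|_{\mathcal{C}_r}^2$.

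Using \eqref{lamda}--\eqref{lamda-1} to absorb all the $\int_{\tau-t}^\tau e^{\varepsilon s}\mathbb{E}\sum_i\theta_{k,i}|u_i(s)|^2\,ds$ terms into the left side, precisely as in the passage from \eqref{est1-1+3} to \eqref{est1-1+8}, I expect to reach
\[
\mathbb{E}\Big(\sup_{\tau-r\le\varrho\le\tau}\sum_i\theta_{k,i}|u_i(\varrho)|^2\Big)\le \frac{C}{k}+C\,e^{\varepsilon(r-t)}\mathbb{E}\|u_{\tau-t}\|_{\mathcal{C}_r}^2+C\int_{-\infty}^{\tau}e^{\varepsilon(s-\tau)}\sum_i\theta_{k,i}\big(\eta_i(s)+g_i(s)^2+\kappa_i(s)^2\big)\,ds,
\]
with $C$ independent of $k,t,\tau,D$. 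Since for each $s$ the summand $\sum_i\theta_{k,i}(\eta_i(s)+g_i(s)^2+\kappa_i(s)^2)\to0$ as $k\to\infty$ and is dominated by $\|\eta(s)\|_1+\|g(s)\|^2+\|\kappa\|_{L^\infty(\mathbb{R},\ell^2)}^2$, which is $e^{\varepsilon(s-\tau)}$-integrable on $(-\infty,\tau]$ by \eqref{g^2}, the dominated convergence theorem makes the last integral tend to $0$ as $k\to\infty$. Given $\delta>0$ I would first fix $N_0=N_0(\delta,\tau)$ so that $C/k$ and that integral are each $<\delta/4$; then, as $\mathcal{L}_{u_{\tau-t}}\in D(\tau-t)$ with $D\in\mathcal{D}_0$ forces $e^{\varepsilon(r-t)}\mathbb{E}\|u_{\tau-t}\|_{\mathcal{C}_r}^2\le e^{\varepsilon(r-\tau)}e^{\varepsilon(\tau-t)}\|D(\tau-t)\|_{\mathcal{P}_2(\mathcal{C}_r)}^2\to0$, pick $T=T(\delta,\tau,D)>r$ so the middle term is $<\delta/2$ for $t\ge T$. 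Taking $N=2N_0$ and noting $\theta(|i|/N_0)=1$ for $|i|\ge 2N_0$ gives $\mathbb{E}(\sup_{\tau-r\le\varrho\le\tau}\sum_{|i|\ge n}|u_i(\varrho)|^2)<\delta$ for all $t\ge T$ and $n\ge N$, which is the assertion.

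The main obstacle will be the bookkeeping: keeping every quadratic-in-$u$ term attached to the weight $\theta_{k,i}$ so that the largeness of $\lambda$ absorbs it uniformly in $k$, cleanly isolating the genuine tails $\sum_i\theta_{k,i}\eta_i$, $\sum_i\theta_{k,i}g_i^2$, $\sum_i\theta_{k,i}\kappa_i^2$ and the $O(1/k)$ Laplacian error, and finally killing the initial-data contribution, which truncation cannot help with and which only the $\mathcal{D}_0$ decay can control. The delay also demands care that the time-shift of the weighted norm never reintroduces an uncontrolled tail of $u_{\tau-t}$, so the over-hang interval $[\tau-t-r,\tau-t]$ must be estimated by the whole segment norm $\|u_{\tau-t}\|_{\mathcal{C}_r}^2$ rather than by a tail thereof.
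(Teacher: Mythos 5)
Your proposal is correct and follows essentially the same route as the paper's proof: a smooth cut-off applied through the It\^o formula for the weighted norm, an $O(1/n)$ commutator estimate for the discrete Laplacian controlled by the integrated bound of Lemma \ref{est-1}, absorption of the weighted quadratic terms (including the shifted delay integrals) via the largeness of $\lambda$ and the BDG inequality, decay of the initial-data contribution via the $\mathcal{D}_0$ condition, and dominated convergence to kill the tails of $\eta$, $g$ and $\kappa$. The only differences are cosmetic (you weight by $\theta(|i|/k)$ where the paper uses $\theta(i/n)^2$, and you bound the delay over-hang by the full segment norm where the paper keeps the truncated one), and neither affects the argument.
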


\begin{proof}
Let $\theta: \mathbb{R} \rightarrow [0,1]$ be a smooth function such that
\begin{align}\label{tail-est-1}
   \theta(s)=0  \  \text{ for }
   |s| \le 1;
   \quad
   \theta(s)=1 \ \text{ for } \   |s| \ge 2.
\end{align}
Given $n\in \mathbb{N}$, denote by  $\theta_n=( \theta(\frac{i}{n}))_{i \in \mathbb{Z}}$ and
$\theta_n u=( \theta(\frac{i}{n})u_i)_{i \in \mathbb{Z}}$ for $u=(u_i)_{i \in \mathbb{Z}}$. For convenience,
we denote by $u(t):= u(t, \tau, u_\tau)$  the solution of \eqref{ob-3}-\eqref{ob-4} with initial data $u_\tau$
at initial time $ \tau$.

By using \eqref{ob-3}, we have
\begin{equation*}\label{tail-est-2}
\begin{split}
&\; d(\theta_n u(t))+ \nu  \theta_n  A u (t)+ \lambda(\theta_n u (t))dt
\\
&=\; (\theta_n f(t, u(t), u(t-r),\mathcal{L}_{u(t)})+ \theta_n g(t))dt+\theta_n \tilde{\sigma}(t, u(t),u(t-r), \mathcal{L}_{u(t)}) dW(t).
\end{split}
\end{equation*}
Applying the It\^{o} formula, we get for all $\varrho \ge \tau-t$,
 \begin{equation} \label{tail-est-3}
 \begin{split}
 & e^{\varepsilon \varrho} \|\theta_n u(\varrho)\|^2+2\nu \int_{\tau-t}^\varrho e^{\varepsilon s}(Bu(s), B(\theta_n^2 u(s))) ds
 +(2 \lambda-\varepsilon)\int_ {\tau-t}^\varrho e^{\varepsilon s}\|\theta_n u(s)\|^2 ds
 \\
 = &\, e^{\varepsilon (\tau-t)}\|\theta_n u_{\tau-t}(0)\|^2+ 2 \int_ {\tau-t}^\varrho e^{\varepsilon s}
 (\theta_n f(s, u(s), u(s-r), \mathcal{L}_{u (s)}),  \theta_n u(s))ds
 \\
 &+ 2 \int_ {\tau-t}^\varrho e^{\varepsilon s}
 (\theta_n g(s),  \theta_n u(s))ds
 \\
 & +\int_ {\tau-t}^\varrho e^{\varepsilon s}\|\theta_n \tilde{\sigma}(s, u(s), u(s-r), \mathcal{L}_{u(s)})\|_{L_2(\ell^2, \ell^2)}^2 ds
 \\
 &+ 2 \int_ {\tau-t}^{\varrho} e^{\varepsilon s}\left(\theta_n^2  u(s), \tilde{\sigma}(s, u(s), u(s-r),\mathcal{L}_{u(s)}) dW(s)\right),
 \end{split}
 \end{equation}
$\mathbb{P}$-almost surely. In view of the following fact that
\begin{eqnarray*}
(Bu(s), B(\theta_n^2 u(s)))\!\!\!&=&\!\!\! \sum_{i\in \mathbb{Z}}(u_{i+1}(s)-u_i(s))(\theta(\frac{i+1}{n})^2u_{i+1}(s)-\theta(\frac{i}{n})^2u_i(s))
\\
\!\!\!&=&\!\!\! \sum_{i\in \mathbb{Z}} \theta(\frac{i+1}{n})^2(u_{i+1}(s)-u_{i}(s))^2
+\sum_{i\in \mathbb{Z}}(\theta(\frac{i+1}{n})^2-\theta(\frac{i}{n})^2)(u_{i+1}(s)-u_{i}(s))  \textcolor{red}{ u_{i}(s)},
\end{eqnarray*}
taking the supremum and expectation of \eqref{tail-est-3}, we obtain
\begin{equation} \label{tail-est-5}\small
 \begin{split}
 & \mathbb{E} (\sup_{\tau-t \le \varrho \le \tau} e^{\varepsilon \varrho} \|\theta_n u(\varrho)\|^2)
 +(2 \lambda-\varepsilon)\int_ {\tau-t}^{\tau} e^{\varepsilon s} \mathbb{E}\|\theta_n u(s)\|^2 ds
 \\
  \le & 2 e^{\varepsilon (\tau-t)} \mathbb{E}\|\theta_n u_{\tau-t}(0)\|^2
  +4 \nu \mathbb{E} \sup_{\tau-t \le \varrho \le \tau} \int_{\tau-t}^{\varrho}
 e^{\varepsilon s}\sum_{i\in \mathbb{Z}}(\theta(\frac{i+1}{n})^2-\theta(\frac{i}{n})^2)(u_{i+1}(s)-u_{i}(s))u_{i}(s) ds
 \\
 &+4  \mathbb{E} \sup_{\tau-t \le \varrho \le \tau} \int_ {\tau-t}^{\varrho} e^{\varepsilon s}
 (\theta_n f(s, u(s), u(s-r), \mathcal{L}_{u (s)}),  \theta_n u(s))ds
 \\
  & + 4  \mathbb{E} \sup_{\tau-t \le \varrho \le \tau}  \int_ {\tau-t}^{\varrho} e^{\varepsilon s}
 (\theta_n g(s),  \theta_n u(s))ds
 \\
 & + 2 \mathbb{E} \sup_{\tau-t \le \varrho \le \tau}  \int_ {\tau-t}^{\varrho} e^{\varepsilon s}\|\theta_n \tilde{\sigma}(s, u(s), u(s-r), \mathcal{L}_{u(s)})\|_{L_2(\ell^2, \ell^2)}^2 ds
 \\
 &+ 4 \mathbb{E} \sup_{\tau-t \le \varrho \le \tau}  \biggl|\int_ {\tau-t}^{\varrho} e^{\varepsilon s}\left(\theta_n^2  u(s), \tilde{\sigma}(s, u(s), u(s-r),\mathcal{L}_{u(s)}) dW(s)\right) \biggr|.
 \end{split}
 \end{equation}
Next, we estimate each terms on the right-hand side of \eqref{tail-est-5} individually.  For the second term on the right-hand side of
\eqref{tail-est-5}, we have
\begin{equation}\label{tail-est-6}
\begin{split}
& 4 \nu \mathbb{E} \sup_{\tau-t \le \varrho \le \tau} \int_{\tau-t}^{\varrho}
 e^{\varepsilon s}\sum_{i\in \mathbb{Z}}(\theta(\frac{i+1}{n})^2-\theta(\frac{i}{n})^2)(u_{i+1}(s)-u_{i}(s))u_{i}(s) ds
\\
\le & \,  8 \nu c_0 n^{-1}\mathbb{ E} \int_{\tau-t}^\tau e^{\varepsilon s}\sum_{i \in \mathbb{Z}}|u_{i+1}(s)-u_i(s)| \cdot |u_i(s)|ds
\\
\le & \,  16 \nu c_0 n^{-1} \int_{\tau-t}^\tau e^{\varepsilon s} \mathbb{E}\|u(s)\|^2 ds,
\end{split}
\end{equation}
where $c_0$ is a constant such that $|\theta'(s)| \le c_0$ for all $s \in \mathbb{R}$. For the third term on the right-hand side of
\eqref{tail-est-5}, by \eqref{fi-1} we have
\begin{equation}\label{tail-est-7}
\begin{split}
& 4  \mathbb{E} \sup_{\tau-t \le \varrho \le \tau} \int_ {\tau-t}^{\varrho} e^{\varepsilon s}
 (\theta_n f(s, u(s), u(s-r), \mathcal{L}_{u (s)}),  \theta_n u(s))ds
\\
\le & \,  -4 \alpha \mathbb{ E} \int_{\tau-t}^\tau e^{\varepsilon s}\sum_{i \in \mathbb{Z}}
\theta^2(\frac{i}{n})|u_i(s)|^p ds
\\
&\, +4 \mathbb{ E} \int_{\tau-t}^\tau e^{\varepsilon s}\sum_{i \in \mathbb{Z}}
\theta^2(\frac{i}{n})\eta_i(s)(1+|u_i(s)|^2+|u_i(s-r)|^2+  \mathbb{E}|u_i(s)|^2)ds
\\
\le & \,  4\int_{\tau-t}^\tau e^{\varepsilon s} \sum_{|i| \ge n} |\eta_i(s)| ds
+ 4\|\eta\|_{L^{\infty}(\mathbb{R},\ell^2)}(2+e^{\varepsilon r})\int_{\tau-t}^\tau e^{\varepsilon s} \mathbb{E} \|\theta_n u(s)\|^2 ds
\\
&\,+4 \varepsilon^{-1} e^{\varepsilon(\tau-t+r)} \mathbb{E} \|\theta_n u_{\tau-t}\|_{\mathcal{C}_r}^2.
\end{split}
\end{equation}
For the fourth term on the right-hand side of \eqref{tail-est-5}, we have
\begin{equation}\label{tail-est-8}
\begin{split}
 4  \mathbb{E} \sup_{\tau-t \le \varrho \le \tau}  \int_ {\tau-t}^{\varrho} e^{\varepsilon s}
 (\theta_n g(s),  \theta_n u(s))ds
\le  \, \frac{8}{3\lambda} \int_{\tau-t}^\tau e^{\varepsilon s}\|\theta_n g(s)\|^2 ds
+\frac{3\lambda}{2}\int_{\tau-t}^\tau e^{\varepsilon s}  \mathbb{ E} \|\theta_n u(s)\|^2 ds.
\end{split}
\end{equation}
To estimate the fifth term on the right-hand side of \eqref{tail-est-5}, we apply \eqref{segamai-1} and observe that
for any $u=(u_k)_{k \in \mathbb{Z}}, v=(v_k)_{k \in \mathbb{Z}} \in \ell^2$ and $\mu=(\mu_k)_{k \in \mathbb{Z}} \in \mathcal{L}^2$,
\begin{equation*}
\begin{split}
 \|\theta_n \tilde{\sigma}(s,u,v,\mu)\|_{L_2(\ell^2,\ell^2)}^2
\le  6 \|\chi(s)\|^2(\|\theta_n u\|^2+\|\theta_n v\|^2
+\sum_{k \in \mathbb{Z}}\theta^2(\frac{k}{n})\mathbb{W}_2^{\mathbb{R}}(\mu_k,\delta_{0})^2)+2 \|\theta_n \kappa(s)\|^2
\end{split}
\end{equation*}
which implies that
\begin{equation}\label{tail-est-9}
\begin{split}
& 2 \mathbb{E} \sup_{\tau-t \le \varrho \le \tau}  \int_ {\tau-t}^{\varrho} e^{\varepsilon s}\|\theta_n \tilde{\sigma}(s, u(s), u(s-r), \mathcal{L}_{u(s)})\|_{L_2(\ell^2, \ell^2)}^2 ds
\\
\le & \,  12\|\chi\|^2_{L^{\infty}(\mathbb{R}, \ell^2)}(2+e^{\varepsilon r}) \int_ {\tau-t}^{\tau} e^{\varepsilon s} \mathbb{E} \|\theta_n u(s)\|^2 ds
\\
& \, + 12 \|\chi\|^2_{L^{\infty}(\mathbb{R}, \ell^2)} \varepsilon^{-1}
e^{\varepsilon(\tau-t+r)} \mathbb{E}(\|\theta_n u_{\tau-t}\|_{\mathcal{C}_r}^2)
+4\int_ {\tau-t}^{\tau} e^{\varepsilon s} \sum_{|i| \ge n}|\kappa_i(s)|^2 ds.
\end{split}
\end{equation}
For the last term on the right-hand side of \eqref{tail-est-5}, by \eqref{tail-est-9} we have
\begin{equation}\label{tail-est-10}
\begin{split}
&   4 \mathbb{E} \sup_{\tau-t \le \varrho \le \tau}  \biggl|\int_ {\tau-t}^{\varrho} e^{\varepsilon s}\left(\theta_n^2  u(s), \tilde{\sigma}(s, u(s), u(s-r),\mathcal{L}_{u(s)}) dW(s)\right) \biggr|
\\
\le &\, 4c_1\mathbb{E}\{\sup_{\tau-t \le \varrho \le \tau} e^{\varepsilon \varrho}\|\theta_n u(\varrho)\|^2 \cdot
(\int_{\tau-t}^\tau \|\theta_n \tilde{\sigma}(s, u(s), u(s-r), \mathcal{L}_{u(s)})\|_{L_2(\ell^2, \ell^2)}^2 ds)\}^{\frac{1}{2}}
\\
\le &\, \frac{1}{2} \mathbb{E}(\sup_{\tau-t \le \varrho\le \tau}e^{\varepsilon \varrho}\|\theta_n u(\varrho)\|^2)
+ 8 c_1^2 \int_{\tau-t}^\tau
\mathbb{E}\|\theta_n \tilde{\sigma}(s, u(s), u(s-r), \mathcal{L}_{u(s)})\|_{L_2(\ell^2, \ell^2)}^2 ds
\\
\le &\,  \frac{1}{2}\mathbb{E}(\sup_{\tau-t \le \varrho\le \tau}e^{\varepsilon \varrho}\|\theta_n u(\varrho)\|^2)
+48 c_1^2 \|\chi\|^2_{L^{\infty}(\mathbb{R}, \ell^2)}(2+e^{\varepsilon r}) \int_ {\tau-t}^{\tau} e^{\varepsilon s} \mathbb{E} \|\theta_n u(s)\|^2 ds
\\
&+48 c_1^2\|\chi\|^2_{L^{\infty}(\mathbb{R}, \ell^2)} \varepsilon^{-1} e^{\varepsilon(\tau-t+r)}
\mathbb{E}\|\theta_n u_{\tau-t}\|_{\mathcal{C}_r}^2
 +16 c_1^2 \int_ {\tau-t}^{\tau} e^{\varepsilon s} \sum_{|i| \ge n}|\kappa_i(s)|^2 ds,
\end{split}
\end{equation}
Combining \eqref{tail-est-5}-\eqref{tail-est-10}, we get that
 \begin{equation*}
\begin{split}
& \mathbb{E} (\sup_{\tau-t \le \varrho \le \tau} e^{\varepsilon \varrho} \|\theta_n u(\varrho)\|^2)
 +(2 \lambda-\varepsilon)\int_ {\tau-t}^{\tau} e^{\varepsilon s} \mathbb{E}\|\theta_n u(s)\|^2 ds
\\
\le &\, \frac{1}{2}\mathbb{E}(\sup_{\tau-t \le \varrho\le \tau}e^{\varepsilon \varrho}\|\theta_n u(\varrho)\|^2)
   +16 \nu c_0 n^{-1} \int_{\tau-t}^\tau e^{\varepsilon s} \mathbb{E}\|u(s)\|^2 ds
\\
& \, +\left[ 2+4 \varepsilon^{-1} e^{\varepsilon r}+12\|\chi\|^2_{L^{\infty}(\mathbb{R}, \ell^2)} \varepsilon^{-1} e^{\varepsilon r}(1+4c_1^2) \right]
e^{\varepsilon (\tau-t)}\mathbb{E}\|\theta_n u_{\tau-t}\|_{\mathcal{C}_r}^2
\\
& +\left[ 4\|\eta\|_{L^{\infty}(\mathbb{R},\ell^2)}(2+e^{\varepsilon r})+\frac{3\lambda}{2}
         +12 \|\chi\|^2_{L^{\infty}(\mathbb{R}, \ell^2)}(2+e^{\varepsilon r})(1+4c_1^2)
\right] \int_{\tau-t}^\tau e^{\varepsilon s} \mathbb{E}\|\theta_n u(s)\|^2 ds
\\
&\, +4 \int_{\tau-t}^\tau e^{\varepsilon s} \sum_{|i| \ge n} |\eta_i(s)| ds+ \frac{8}{3\lambda}
\int_{\tau-t}^\tau e^{\varepsilon s} \sum_{|i|\ge n}|g_i(s)|^2 ds
+4(1+4c_1^2) \int_ {\tau-t}^{\tau} e^{\varepsilon s} \sum_{|i| \ge n}|\kappa_i(s)|^2 ds.
\end{split}
\end{equation*}
Following \eqref{lamda} and Lemma \ref{est-2},  there exist $b_1=b_1(\tau)$ and $T_1=T_1(\tau, D) \ge r$ such that for all $t \ge T_1$,
\begin{equation*}
\begin{split}
& \mathbb{E} (\sup_{\tau-t \le \varrho \le \tau} e^{\varepsilon \varrho} \|\theta_n u(\varrho)\|^2)
\\
\le & 2\left[ 2+4 \varepsilon^{-1} e^{\varepsilon r}+12 \|\chi\|^2_{L^{\infty}(\mathbb{R}, \ell^2)} \varepsilon^{-1}
e^{\varepsilon r}(1+4c_1^2)\right]
e^{\varepsilon (\tau-t)}\mathbb{E}\|\theta_n u_{\tau-t}\|_{\mathcal{C}_r}^2
+32 \nu c_0 n^{-1}b_1
\\
& +8 \int_{\tau-t}^\tau e^{\varepsilon s} \sum_{|i| \ge n} |\eta_i(s)| ds+ \frac{16}{3\lambda}
\int_{\tau-t}^\tau e^{\varepsilon s}  \sum_{|i|\ge n}|g_i(s)|^2 ds
+8(1+4c_1^2) \int_ {\tau-t}^{\tau} e^{\varepsilon s} \sum_{|i| \ge n}|\kappa_i(s)|^2 ds.
\end{split}
\end{equation*}
For $t \ge T_1$,  we further find that
\begin{equation} \label{tail-est-12}
\begin{split}
\quad &\; \mathbb{E}(\sup_{\tau-r \le \varrho \le \tau}\|\theta_n u(\varrho)\|^2)  \le  e^{-\varepsilon(\tau-r)}
\mathbb{E}(\sup_{\tau-t \le \varrho \le \tau} e^{\varepsilon \varrho}\|\theta_n u(\varrho)\|^2)
\\
\le & \; 2\left[ 2+4 \varepsilon^{-1} e^{\varepsilon r}+12\|\chi\|^2_{L^{\infty}(\mathbb{R}, \ell^2)} \varepsilon^{-1}
e^{\varepsilon r}(1+4c_1^2)\right]e^{\varepsilon (r-t)} \mathbb{E}\|u_{\tau-t}\|_{\mathcal{C}_r}^2
+32 e^{-\varepsilon(\tau-r)}\nu c_0 n^{-1}b_1
\\
& +8 e^{\varepsilon r} \int_{-\infty}^\tau e^{\varepsilon (s-\tau)} \sum_{|i| \ge n} |\eta_i(s)| ds
+ \frac{16}{3\lambda} e^{\varepsilon r}\int_{-\infty}^\tau e^{\varepsilon (s-\tau)} \sum_{|i|\ge n}|g_i(s)|^2 ds
\\
&\,+8 e^{\varepsilon r}(1+4c_1^2) \int_ {-\infty}^{\tau} e^{\varepsilon (s-\tau)} \sum_{|i| \ge n}|\kappa_i(s)|^2 ds.
\end{split}
\end{equation}
For the first term on the right-hand side of \eqref{tail-est-12}, since $\mathcal{L}_{u_{\tau-t}} \in D(\tau-t)$, we get that
\begin{eqnarray*}
e^{\varepsilon (r-t)} \mathbb{E}\|u_{\tau-t}\|_{\mathcal{C}_r}^2\!\!\!& \le &\!\!\! e^{\varepsilon (r-\tau)} \{e^{\varepsilon (\tau-t)}
\mathbb{E}\|D(\tau-t)\|_{\mathcal{P}_2(\mathcal{C}_r)}^2\}\rightarrow 0
\end{eqnarray*}
as $t \rightarrow \infty$. Thus, there exists $T_2=T_2(\delta, \tau, D) \ge T_1$ such that for all $t \ge  T_2$,
$$
 2\left[ 2+4 \varepsilon^{-1} e^{\varepsilon r}+12\|\chi\|^2_{L^{\infty}(\mathbb{R}, \ell^2)} \varepsilon^{-1}
e^{\varepsilon r}(1+4c_1^2)\right] e^{\varepsilon (r-t)} \mathbb{E}\|u_{\tau-t}\|_{\mathcal{C}_r}^2< \frac{\delta}{4}.
$$
For the second term on the right-hand side of \eqref{tail-est-12}, there exists $N_1=N_1(\delta, \tau) \in \mathbb{N}^+$
such that for all $n \ge N_1$,
\begin{eqnarray*}
32 \nu c_0 n^{-1}b_1<\frac{\delta}{4}.
\end{eqnarray*}
For the third and fourth terms on the right-hand side of \eqref{tail-est-12}, by using \eqref{g^2} we notice that
\begin{eqnarray*}
8 e^{\varepsilon r} \int_{-\infty}^\tau e^{\varepsilon (s-\tau)} \sum_{|i| \ge n} |\eta_i(s)| ds \!\!\!&\le&\!\!\! 8 e^{\varepsilon r} \int_{-\infty}^\tau e^{\varepsilon (s-\tau)}\|\eta(s)\|_1 ds<\infty
\end{eqnarray*}
and
\begin{eqnarray*}
\frac{16}{3\lambda} e^{\varepsilon r}\int_{-\infty}^\tau e^{\varepsilon (s-\tau)} \sum_{|i|\ge n}|g_i(s)|^2 ds
\!\!\!&\le&\!\!\! \frac{16}{3\lambda} e^{\varepsilon r}\int_{-\infty}^\tau e^{\varepsilon (s-\tau)}\|g(s)\|^2 ds<\infty.
\end{eqnarray*}
By the dominated convergence theorem, there exists $N_2=N_2(\delta, \tau) \ge N_1$ such that for all $n \ge N_2$,
\begin{eqnarray*}
8 e^{\varepsilon r} \int_{-\infty}^\tau e^{\varepsilon (s-\tau)} \sum_{|i| \ge n} |\eta_i(s)| ds+\frac{16}{3\lambda} e^{\varepsilon r}\int_{-\infty}^\tau e^{\varepsilon (s-\tau)} \sum_{|i|\ge n}|g_i(s)|^2 ds <\frac{\delta}{4},
\end{eqnarray*}
For the last term on the right-hand side of \eqref{tail-est-12}, using the fact that $\kappa \in L^\infty(\mathbb{R}, \ell^2)$
we find that
\begin{eqnarray*}
8 e^{\varepsilon r}(1+4c_1^2) \int_ {-\infty}^{\tau} e^{\varepsilon (s-\tau)} \sum_{|i| \ge n}|\kappa_i(s)|^2 ds
\!\!\!& \le &\!\!\! 8 e^{\varepsilon r}(1+4c_1^2)\int_ {-\infty}^{\tau} e^{\varepsilon (s-\tau)}|\kappa(s)|^2 ds
\\
\!\!\!& \le &\!\!\! 8 e^{\varepsilon r}\varepsilon^{-1}(1+4c_1^2) \|\kappa\|_{L^{\infty}(\mathbb{R}, \ell^2)}.
\end{eqnarray*}
By the dominated convergence theorem again, there exists $N_3=N_3(\delta, \tau) \ge N_3$ such that for all $n \ge N_3$,
\begin{eqnarray*}
8 e^{\varepsilon r}(1+4c_1^2) \int_ {-\infty}^{\tau} e^{\varepsilon (s-\tau)} \sum_{|i| \ge n}|\kappa_i(s)|^2 ds
<\frac{\delta}{4},
\end{eqnarray*}
Combining the above analysis, we get for all $t \ge T_2$ and $n \ge N_3$ that
\begin{eqnarray*}
\mathbb{E}(\sup_{\tau-r \le \varrho \le \tau}\|\theta_n u(\varrho)\|^2)<\delta,
\end{eqnarray*}
which implies that for all $t \ge T_2$ and $n \ge N_3$,
\begin{eqnarray*}
\mathbb{E}(\sup_{\tau-r \le \varrho \le \tau}\sum_{|i| \ge 2n}\|u_i(\varrho)\|^2)<\mathbb{E}(\sup_{\tau-r \le \varrho \le \tau}\|\theta_n u(\varrho)\|^2)<\delta.
\end{eqnarray*}
This completes the proof of Lemma \ref{est-2}.
\end{proof}

At the end of this section, we derive the uniform estimates of segemts of solutions of \eqref{ob-3}-\eqref{ob-4}
in $L^4(\Omega, \mathcal{C}_r)$.

\begin{lm}\label{est-3}
Assume that {\bf  (H1)}-{\bf (H3)}, \eqref{lamda} and \eqref{g^4}
hold. Then, for every $\tau \in \mathbb{R}$ and $D=\{D(t): t \in \mathbb{R}\} \in \mathcal{D}$,
there exists $T=T(\tau, D)>r$ such that for all $t \ge T$, the solution $u_\tau$ of
\eqref{ob-3}-\eqref{ob-4} satisfies
\begin{align}\label{est3-1}
\mathbb{E}\left(\|u_\tau(\cdot, \tau - t, u_{\tau -t}  )\|_{\mathcal{C}_r}^4\right)
\le M_3+M_3\int_{-\infty}^\tau e^{2\varepsilon(s-\tau)}\|g(s)\|^4 ds,
\end{align}
where
$u_{\tau-t} \in L^4(\Omega, \mathcal{F}_{\tau-t}, \mathcal{C}_r)$
with
$\mathcal{L}_{u_{ \tau - t } } \in D(\tau-t)$, $\varepsilon>0$ is the same number as in \eqref{lamda-1},
and $ M_3>0 $  is a constant independent of $\tau$ and $D$.
\end{lm}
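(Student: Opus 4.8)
The plan is to derive an It\^o identity for $e^{2\varepsilon t}\|u(t)\|^4$, estimate the resulting terms by the dissipativity hypotheses, Young's inequality and the Burkholder--Davis--Gundy (BDG) inequality, absorb the time-supremum of the stochastic term and the leading $\|u\|^4$-integral using the largeness of $\lambda$ in \eqref{lamda}--\eqref{lamda-1}, and conclude from $D\in\mathcal D$ and the integrability \eqref{g^4}. Write $u(t):=u(t,\tau-t,u_{\tau-t})$ with $u_{\tau-t}\in L^4(\Omega,\mathcal F_{\tau-t},\mathcal C_r)$, $\mathcal L_{u_{\tau-t}}\in D(\tau-t)$, $D\in\mathcal D$. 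Applying It\^o's formula to $\|u(t)\|^2$ as in Step~3 of Proposition \ref{Pro1}, then the one-dimensional It\^o formula to $x\mapsto e^{2\varepsilon t}x^2$, discarding the term $4\nu\|u\|^2\|Bu\|^2\ge0$ and using $4\|\tilde\sigma^* u\|^2\le4\|u\|^2\|\tilde\sigma\|_{L_2(\ell^2,\ell^2)}^2$, gives for every $\varrho\ge\tau-t$:
\begin{equation*}
\begin{split}
&e^{2\varepsilon\varrho}\|u(\varrho)\|^4+(4\lambda-2\varepsilon)\int_{\tau-t}^{\varrho}e^{2\varepsilon s}\|u(s)\|^4\,ds
\le e^{2\varepsilon(\tau-t)}\|u_{\tau-t}(0)\|^4
+4\int_{\tau-t}^{\varrho}e^{2\varepsilon s}\|u(s)\|^2(g(s),u(s))\,ds\\
&\qquad+4\int_{\tau-t}^{\varrho}e^{2\varepsilon s}\|u(s)\|^2\bigl(f(s,u(s),u(s-r),\mathcal L_{u(s)}),u(s)\bigr)\,ds\\
&\qquad+6\int_{\tau-t}^{\varrho}e^{2\varepsilon s}\|u(s)\|^2\|\tilde\sigma(s,u(s),u(s-r),\mathcal L_{u(s)})\|_{L_2(\ell^2,\ell^2)}^2\,ds
+4\int_{\tau-t}^{\varrho}e^{2\varepsilon s}\|u(s)\|^2\bigl(u(s),\tilde\sigma(s,u(s),u(s-r),\mathcal L_{u(s)})\,dW(s)\bigr).
\end{split}
\end{equation*}
I then take $\sup_{\tau-t\le\varrho\le\tau}$ and expectations.

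Next, estimate the drift terms: use \eqref{fi-1} and \eqref{dis-property} (so $\rho(\mathcal L_{u(s)},\hat\delta_0)^2\le\mathbb E\|u(s)\|^2$) for the $f$-integral and \eqref{bound-2} for the $\tilde\sigma$-integral; split each cross term $\|u(s)\|^2\|u(s-r)\|^2$, $\|u(s)\|^2\,\mathbb E\|u(s)\|^2$, $\|u(s)\|^3\|g(s)\|$, $\|u(s)\|^2\|\eta(s)\|_1$, $\|u(s)\|^2\|\kappa(s)\|^2$ by Young's inequality with a small parameter, and handle the distribution factors by Jensen, $\mathbb E[\|u(s)\|^2\,\mathbb E\|u(s)\|^2]=(\mathbb E\|u(s)\|^2)^2\le\mathbb E\|u(s)\|^4$. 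For the stochastic term apply the BDG inequality exactly as in \eqref{pro1-4}, writing $16e^{4\varepsilon s}\|u(s)\|^6\|\tilde\sigma(s,\cdot)\|_{L_2}^2=(e^{2\varepsilon s}\|u(s)\|^4)\cdot(16e^{2\varepsilon s}\|u(s)\|^2\|\tilde\sigma(s,\cdot)\|_{L_2}^2)$, pulling the first factor out as $\sup_\varrho e^{2\varepsilon\varrho}\|u(\varrho)\|^4$, so that $4c_1\mathbb E(\cdots)^{1/2}\le\tfrac12\mathbb E(\sup_{\tau-t\le\varrho\le\tau}e^{2\varepsilon\varrho}\|u(\varrho)\|^4)+8c_1^2\mathbb E\int_{\tau-t}^{\tau}e^{2\varepsilon s}\|u(s)\|^2\|\tilde\sigma(s,\cdot)\|_{L_2}^2\,ds$; the supremum term is absorbed into the left side. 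All of these steps, after expanding $\|\tilde\sigma\|_{L_2}^2$ by \eqref{bound-2}, leave a single $\|u\|^4$-integral whose coefficient is strictly smaller than $4\lambda-2\varepsilon$ — this is exactly the role of \eqref{lamda}--\eqref{lamda-1}, noting $4\lambda-2\varepsilon>4(\lambda-4\varepsilon)$ and that the delay weight $e^{2\varepsilon r}$ already appears in \eqref{lamda} — so that integral is absorbed as well.

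For the delay integral, the substitution $s\mapsto s+r$ gives $\int_{\tau-t}^{\tau}e^{2\varepsilon s}\mathbb E\|u(s-r)\|^4\,ds\le e^{2\varepsilon r}\int_{\tau-t}^{\tau}e^{2\varepsilon s}\mathbb E\|u(s)\|^4\,ds+re^{2\varepsilon r}e^{2\varepsilon(\tau-t)}\mathbb E\|u_{\tau-t}\|_{\mathcal C_r}^4$, the first piece absorbed as above and the second sent into the initial-data bucket. What remains on the right is a constant times $e^{2\varepsilon(\tau-t)}\mathbb E\|u_{\tau-t}\|_{\mathcal C_r}^4$, the weighted data integrals $\int_{\tau-t}^{\tau}e^{2\varepsilon s}(\|g(s)\|^4+\|\eta(s)\|_1^2+\|\kappa(s)\|^2)\,ds$, and — for the lower-order cross terms that do not Young cleanly into $\|u\|^4$ — terms of the form $\int_{\tau-t}^{\tau}e^{2\varepsilon s}\mathbb E\|u(s)\|^2(\cdots)\,ds$ bounded via Lemma \ref{est-1}. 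Since $\mathcal L_{u_{\tau-t}}\in D(\tau-t)$ and $D\in\mathcal D$, $e^{2\varepsilon(\tau-t)}\mathbb E\|u_{\tau-t}\|_{\mathcal C_r}^4\le e^{2\varepsilon(\tau-t)}\|D(\tau-t)\|_{\mathcal P_4(\mathcal C_r)}^4\to0$ as $t\to\infty$, so there is $T=T(\tau,D)>r$ with this term $\le1$ for $t\ge T$; \eqref{g^4} and $\kappa\in L^\infty(\mathbb R,\ell^2)$ bound the data integrals, and after multiplying through by $e^{-2\varepsilon\tau}$ they become $M_3+M_3\int_{-\infty}^{\tau}e^{2\varepsilon(s-\tau)}\|g(s)\|^4\,ds$. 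Finally, since $t>r$ and $e^{2\varepsilon\varrho}\ge e^{2\varepsilon(\tau-r)}$ for $\varrho\ge\tau-r$,
\[
\mathbb E\|u_\tau(\cdot,\tau-t,u_{\tau-t})\|_{\mathcal C_r}^4=\mathbb E\bigl(\sup_{\tau-r\le\varrho\le\tau}\|u(\varrho)\|^4\bigr)\le e^{-2\varepsilon(\tau-r)}\,\mathbb E\bigl(\sup_{\tau-t\le\varrho\le\tau}e^{2\varepsilon\varrho}\|u(\varrho)\|^4\bigr),
\]
which is bounded by the right-hand side of \eqref{est3-1}.

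The main obstacle is the constant bookkeeping forced by the combination of the fourth power, the delay and the mean-field dependence: every term produced by It\^o on $\|u\|^4$ is cubic or quartic, and after Young's inequality one must verify that the accumulated coefficient of $\mathbb E\|u(s)\|^4$ — now including contributions from the It\^o correction $6\|u\|^2\|\tilde\sigma\|_{L_2}^2$, from the BDG step, from the distribution terms through \eqref{dis-property}, and from the delay shift with its $e^{2\varepsilon r}$ factor — still lies strictly below $4\lambda-2\varepsilon$, which is precisely what the choice of $\lambda$ in \eqref{lamda}--\eqref{lamda-1} guarantees. A secondary technical point is that the time-supremum must be absorbed before invoking the largeness of $\lambda$, which dictates the particular splitting of $e^{4\varepsilon s}\|u\|^6\|\tilde\sigma\|_{L_2}^2$ used in the BDG estimate.
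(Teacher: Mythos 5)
Your proposal is correct and follows essentially the same route as the paper: an Itô identity for $e^{2\varepsilon t}\|u(t)\|^4$, supremum plus expectation, the dissipativity condition \eqref{fi-1} together with \eqref{dis-property}, \eqref{bound-2}, Young's inequality and the BDG absorption of the half-supremum, the $e^{2\varepsilon r}$ delay shift, absorption of the $\|u\|^4$-integral via \eqref{lamda}--\eqref{lamda-1}, and the conclusion from $D\in\mathcal D$ and \eqref{g^4}. The only cosmetic difference is that you invoke Lemma \ref{est-1} for some lower-order cross terms where the paper simply applies Young's inequality to fold them into the $\|u\|^4$-integral and a constant; both work.
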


\begin{proof}
 Let  $u(t)= u(t, \tau-t, u_{\tau-t)}$ be the solution of \eqref{ob-3}-\eqref{ob-4} with initial data $u_{\tau-t}$
at initial time $\tau-t$.

By the It\^{o} formula, taking the supremum and expectation, we obtain that for all $\varrho\ge \tau-t$,
\begin{equation}\label{segment-est-3}
\begin{split}
& \mathbb{E}(\sup_{\tau-t \le \varrho \le \tau} e^{2\varepsilon \varrho} \|u(\varrho)\|^4)
 + 2 (2 \lambda-\varepsilon) \mathbb{E} \int_ {\tau-t}^{\tau} e^{ 2 \varepsilon s}\|u( s )\|^4 ds
\\
\le &\, 2 e^{2 \varepsilon (\tau-t)} \mathbb{E}\|u_{\tau-t}(0)\|^4
+8 \mathbb{E} \sup_{\tau-t \le \varrho \le \tau} \int_ {\tau-t}^{\varrho}
      e^{2 \varepsilon s}\|u (s)\|^2 (f(s, u(s), u(s-r), \mathcal{L}_{u (s)} ),  u(s)) ds
\\
& + 8 \mathbb{E} \sup_{\tau-t \le \varrho \le \tau} \int_ {\tau-t}^{\varrho} e^{2\varepsilon s}\|u (s)\|^2  (g(s), u(s)) ds
\\
&+ 4 \mathbb{E} \sup_{\tau-t \le \varrho \le \tau} \int_ {\tau-t}^{\varrho} e^{2 \varepsilon s}\|u (s)\|^2
\|\tilde{\sigma}(s, u(s), u(s-r), \mathcal{L}_{u  (s)} )\|_{L_2(\ell^2,\ell^2)}^2 ds
\\
 &+ 8 \mathbb{E} \sup_{\tau-t \le \varrho \le \tau} \int_ {\tau-t}^{\varrho}
 e^{2 \varepsilon s} {\rm tr} \{(u(s)\otimes u(s))(\tilde{\sigma}(s, u(s), u(s-r), \mathcal{L}_{u  (s)}))
 \\
 & \qquad\qquad\qquad\qquad\qquad\quad  \cdot(\tilde{\sigma}(s, u(s), u(s-r), \mathcal{L}_{u  (s)}))^*\} ds
\\
& + 8 \mathbb{E} \sup_{\tau-t \le \varrho \le \tau} \biggl|\int_ {\tau-t}^{\varrho}
e^{2\varepsilon s} \|u (s)\|^2 \left( u (s), \tilde{\sigma}(s, u(s),u(s-r), \mathcal{L}_{u(s)}) dW(s)\right) \biggr|,
 \end{split}
 \end{equation}
where $u(s)\otimes u(s)=u(s)(u(s), \cdot)$.

Next, we estimate each terms on the right-hand side of \eqref{segment-est-3} individually.  For the second term on the right-hand side of \eqref{segment-est-3}, by \eqref{fi-2} we have
\begin{equation}\label{segment-est-4}
\begin{split}
& 8 \mathbb{E} \sup_{\tau-t \le \varrho \le \tau} \int_ {\tau-t}^{\varrho}
      e^{2 \varepsilon s}\|u (s)\|^2 (f(s, u(s), u(s-r), \mathcal{L}_{u (s)} ),  u(s)) ds
\\
\le &  \;  8 \mathbb{E} \sup_{\tau-t \le \varrho \le \tau}  \left(-\alpha \int_ {\tau-t}^{\varrho}
      e^{2 \varepsilon s}\|u (s)\|^2 \|u(s)\|_p^p ds \right)+  8 \mathbb{E}\int_ {\tau-t}^{\tau}
      e^{2 \varepsilon s}\|\eta(s)\|_1 \|u(s)\|^2 ds
\\
 & + 8\int_ {\tau-t}^{\tau} e^{2 \varepsilon s}\|\eta(s)\| \left[\mathbb{E }\|u(s)\|^4+\mathbb{E }\|u(s)\|^2\|u(s-r)\|^2
 + (\mathbb{E }\|u(s)\|^2)^2 \right] ds
\\
\le &  \; \frac{16}{\lambda} \int_ {\tau-t}^{\tau}e^{2 \varepsilon s}\|\eta(s)\|_1^2ds+
2 \varepsilon^{-1} \|\eta\|_{L^{\infty}(\mathbb{R},\ell^2)} e^{2\varepsilon(\tau-t+r)} \mathbb{E}\|u_{\tau-t}\|_{\mathcal{C}_r}^4
\\
 & +\left[\lambda+4\|\eta\|_{L^{\infty}(\mathbb{R},\ell^2)}(5+e^{2\varepsilon r})\right]
\int_ {\tau-t}^{\tau} e^{2 \varepsilon s} \mathbb{E}\|u(s)\|^4 ds.
\end{split}
\end{equation}
For the third term on the right-hand side of \eqref{segment-est-3}, we have
\begin{equation}\label{segment-est-5}
\begin{split}
&  8 \mathbb{E} \sup_{\tau-t \le \varrho \le \tau} \int_ {\tau-t}^{\varrho} e^{2\varepsilon s}\|u (s)\|^2  (g(s), u(s)) ds
\\
\le & 8 \mathbb{E} \int_ {\tau-t}^{\tau} e^{\frac{3}{2}\varepsilon s}\|u (s)\|^3  \cdot e^{\frac{1}{2}\varepsilon s} \|g(s)\|ds
\\
\le &   \frac{5}{2}\lambda\int_ {\tau-t}^{\tau} e^{2 \varepsilon s} \mathbb{E }\|u(s)\|^4 ds
+\frac{3456}{125\lambda^3} \int_{\tau-t}^{\tau}  e^{2 \varepsilon s} \|g(s)\|^4 ds.
\end{split}
\end{equation}
For the fourth and fifth and terms on the right-hand side of \eqref{segment-est-3}, by \eqref{bound-2} we have
\begin{equation}\label{segment-est-6}
\begin{split}
&   4 \mathbb{E} \sup_{\tau-t \le \varrho \le \tau} \int_ {\tau-t}^{\varrho} e^{2 \varepsilon s}\|u (s)\|^2
\|\tilde{\sigma}(s, u(s), u(s-r), \mathcal{L}_{u  (s)} )\|_{L_2(\ell^2,\ell^2)}^2 ds
\\
 &+ 8 \mathbb{E} \sup_{\tau-t \le \varrho \le \tau} \int_ {\tau-t}^{\varrho} e^{2 \varepsilon s} {\rm tr} \{(u(s)\otimes u(s))(\tilde{\sigma}(s, u(s), u(s-r), \mathcal{L}_{u  (s)}))
 \\
 & \qquad\qquad\qquad\qquad\qquad\quad  \cdot(\tilde{\sigma}(s, u(s), u(s-r), \mathcal{L}_{u  (s)}))^*\} ds
\\
\le &  12 \mathbb{E} \int_ {\tau-t}^{\tau} e^{2 \varepsilon s}\|u (s)\|^2
\|\tilde{\sigma}(s, u(s), u(s-r), \mathcal{L}_{u  (s)} )\|_{L_2(\ell^2,\ell^2)}^2 ds
\\
\le &  72 \int_ {\tau-t}^{\tau} e^{2 \varepsilon s} \|\chi(s)\|^2 (\mathbb{E} \|u(s)\|^4+  \mathbb{E} \|u(s)\|^2 \|u(s-r)\|^2+
(\mathbb{ E}\|u(s)\|^2)^2) ds
\\
&+ 12 \int_ {\tau-t}^{\tau} e^{2 \varepsilon s} \|\kappa(s)\|^2 ds +
12 \int_{\tau-t}^{\tau} e^{2 \varepsilon s} \|\kappa(s)\|^2\; \mathbb{E} \|u (s)\|^2 ds
\\
\le &  \left[36\|\chi\|^2_{L^{\infty}(\mathbb{R},\ell^2)}(5+e^{2 \varepsilon r})
+12\|\kappa\|^2_{L^{\infty}(\mathbb{R},\ell^2)} \right] \int_ {\tau-t}^{\tau} e^{2 \varepsilon s} \|u(s)\|^4 ds
\\
&+  18 \varepsilon^{-1} e^{2 \varepsilon (\tau-t+r)}\|\chi\|^2_{L^{\infty}(\mathbb{R},\ell^2)} \mathbb{E}\|u_{\tau-t}\|_{\mathcal{C}_r}^4
+6 \varepsilon^{-1} \|\kappa\|_{L^{\infty}(\mathbb{R},\ell^2)}^2 e^{2 \varepsilon \tau}.
\end{split}
\end{equation}
For the last term on the right-hand side of \eqref{segment-est-3}, by \eqref{segment-est-6} we have
\begin{equation}\label{segment-est-7}
\begin{split}
&   8 \mathbb{E} \sup_{\tau-t \le \varrho \le \tau} \biggl|\int_ {\tau-t}^{\varrho} e^{2\varepsilon s} \|u (s)\|^2 \left( u (s), \tilde{\sigma}(s, u(s),u(s-r), \mathcal{L}_{u(s)}) dW(s)\right) \biggr|
\\
\le &\, 8c_1\mathbb{E}\{\sup_{\tau-t \le \varrho \le \tau} e^{2\varepsilon \varrho}\|u(\varrho)\|^4 \cdot
(\int_{\tau-t}^\tau e^{2\varepsilon s}\|u(s)\|^2 \|\tilde{\sigma}(s, u(s), u(s-r), \mathcal{L}_{u(s)})\|_{L_2(\ell^2, \ell^2)}^2 ds)\}^{\frac{1}{2}}
\\
\le &\, \frac{1}{2} \mathbb{E}(\sup_{\tau-t \le \varrho\le \tau}e^{2\varepsilon \varrho}\|u(\varrho)\|^4)
+ 32 c_1^2 \int_{\tau-t}^\tau e^{2\varepsilon s}\|u(s)\|^2 \|\tilde{\sigma}(s, u(s), u(s-r), \mathcal{L}_{u(s)})\|_{L_2(\ell^2, \ell^2)}^2 ds
\\
\le &\,  \frac{1}{2} \mathbb{E}(\sup_{\tau-t \le \varrho\le \tau}e^{2\varepsilon \varrho}\|u(\varrho)\|^4)
\\
&+ \left[96c_1^2\|\chi\|^2_{L^{\infty}(\mathbb{R},\ell^2)}(5+e^{2 \varepsilon r})
+32c_1^2 \|\kappa\|^2_{L^{\infty}(\mathbb{R},\ell^2)} \right] \int_ {\tau-t}^{\tau} e^{2 \varepsilon s} \mathbb{E}\|u(s)\|^4 ds
\\
&+  48c_1^2\varepsilon^{-1} e^{2 \varepsilon (\tau-t+r)}\|\chi\|^2_{L^{\infty}(\mathbb{R},\ell^2)} \mathbb{E}\|u_{\tau-t}\|_{\mathcal{C}_r}^4
+16 \varepsilon^{-1} c_1^2 \|\kappa\|_{L^{\infty}(\mathbb{R},\ell^2)}^2 e^{2\varepsilon \tau},
\end{split}
\end{equation}
Combining \eqref{segment-est-3}-\eqref{segment-est-7}, we obtain that
 \begin{equation}\label{segment-est-8}
\begin{split}
& \mathbb{E}(\sup_{\tau-t \le \varrho \le \tau} e^{2\varepsilon \varrho} \|u(\varrho)\|^4)
 + 2 (2 \lambda-\varepsilon) \int_ {\tau-t}^{\tau} e^{ 2 \varepsilon s} \mathbb{E} \|u( s )\|^4 ds
\\
\le &\, \frac{1}{2} \mathbb{E}(\sup_{\tau-t \le \varrho\le \tau}e^{2\varepsilon \varrho}\|u(\varrho)\|^4)
\\
&+\left[2+2 \varepsilon^{-1}\|\eta\|_{L^{\infty}(\mathbb{R},\ell^2)} e^{2\varepsilon r}
+6\varepsilon^{-1}e^{2 \varepsilon r}\|\chi\|^2_{L^{\infty}(\mathbb{R},\ell^2)}(3+8 c_1^2) \right] e^{2 \varepsilon (\tau-t)}\mathbb{E}\|u_{\tau-t}\|_{\mathcal{C}_r}^4
\end{split}
\end{equation}
\begin{equation*}
\begin{split}
& +\left[\frac{7}{2}\lambda+4\|\eta\|_{L^{\infty}(\mathbb{R},\ell^2)}(5+e^{2\varepsilon r})
+[12\|\chi\|^2_{L^{\infty}(\mathbb{R},\ell^2)}(5+e^{2 \varepsilon r})
+4\|\kappa\|^2_{L^{\infty}(\mathbb{R},\ell^2)}](3+8c_1^2)\right]
\\
&\quad \times \int_ {\tau-t}^{\tau} e^{2 \varepsilon s} \mathbb{E}\|u(s)\|^4 ds+\frac{3456}{125\lambda^3}  \int_{\tau-t}^{\tau}  e^{2 \varepsilon s} \|g(s)\|^4 ds+\frac{16}{\lambda} \int_ {\tau-t}^{\tau}e^{2 \varepsilon s}\|\eta(s)\|_1^2ds
\\
& + 2 \varepsilon^{-1}\|\kappa\|_{L^{\infty}(\mathbb{R}, \ell^2)}^2 e^{2\varepsilon \tau}(3+8c_1^2).
\end{split}
\end{equation*}
For $t>r$, \eqref{segment-est-8} and \eqref{lamda} implies that
\begin{equation} \label{segment-est-9}
\begin{split}
\quad &\; \mathbb{E}(\sup_{\tau-r \le \varrho \le \tau}\|u(\varrho)\|^4)  \le  e^{-2\varepsilon(\tau-r)}
\mathbb{E}(\sup_{\tau-t \le \varrho \le \tau} e^{-2\varepsilon \varrho}\|u(\varrho)\|^4)
\\
\le & \; 2\left[ 2+2 \varepsilon^{-1}\|\eta\|_{L^{\infty}(\mathbb{R},\ell^2)} e^{2\varepsilon r}
+6\varepsilon^{-1}e^{2 \varepsilon r}\|\chi\|^2_{L^{\infty}(\mathbb{R},\ell^2)}(3+8 c_1^2)\right]  e^{2 \varepsilon (r-t)} \mathbb{E}\|u_{\tau-t}\|_{\mathcal{C}_r}^4
\\
&\; +\frac{6912}{125\lambda^3} e^{2\varepsilon r} \int_{\tau-t}^{\tau}  e^{2 \varepsilon (s-\tau)} \|g(s)\|^4 ds+\frac{32}{\lambda}
e^{2\varepsilon r} \int_ {\tau-t}^{\tau}
e^{2 \varepsilon (s-\tau)}\|\eta(s)\|_1^2ds
\\
&\; + 4 \varepsilon^{-1}\|\kappa\|_{L^{\infty}(\mathbb{R}, \ell^2)}^2 e^{2\varepsilon r}(3+8c_1^2).
\end{split}
\end{equation}
Since $\mathcal{L}_{u_{\tau-t}} \in D(\tau-t)$ and $D\in \mathcal{D}$, we have
\begin{eqnarray*}
\lim_{t \rightarrow \infty} e^{2 \varepsilon (r-t)} \mathbb{E}\|u_{\tau-t}\|_{\mathcal{C}_r}^4\le
 e^{2 \varepsilon (r-\tau)} \lim_{t \rightarrow \infty} e^{2\varepsilon (\tau-t)}\|D(\tau-t)\|_{\mathcal{P}_4{(\mathcal{C}_r)}}^4=0,
\end{eqnarray*}
and hence, there exists $T=T(\tau, r, \varepsilon, D)>r$ such that for all $t \ge T$,
\begin{eqnarray*}
e^{2 \varepsilon (r-t)} \mathbb{E}\|u_{\tau-t}\|_{\mathcal{C}_r}^4 \le 1,
\end{eqnarray*}
which together with \eqref{segment-est-9} and \eqref{g^4} yields that
\begin{equation*}
\begin{split}
\mathbb{E}(\sup_{\tau-r \le \varrho \le \tau}\|u(\varrho)\|^4)
\le & \; 2\left[ 2+2 \varepsilon^{-1}\|\eta\|_{L^{\infty}(\mathbb{R},\ell^2)} e^{2\varepsilon r}
+6\varepsilon^{-1}e^{2 \varepsilon r}\|\chi\|^2_{L^{\infty}(\mathbb{R},\ell^2)}(3+8c_1^2)\right]
\\
&\; + 4 \varepsilon^{-1}\|\kappa\|_{L^{\infty}(\mathbb{R}, \ell^2)}^2 e^{2\varepsilon r}(3+8c_1^2)
\\
&\; +\frac{6912}{125\lambda^3} e^{2\varepsilon r} \int_{\tau-t}^{\tau}  e^{2 \varepsilon (s-\tau)} \|g(s)\|^4 ds
+\frac{32}{\lambda} e^{2\varepsilon r} \int_ {\tau-t}^{\tau}
e^{2 \varepsilon (s-\tau)}\|\eta(s)\|_1^2ds.
\end{split}
\end{equation*}
Thus, the proof is complete.
\end{proof}

\section{Existence of Pullback Measure Attractors}
In this section, we will establish the existence of pullback measure attractors.

\vskip0.1in
For any given $\tau \in \mathbb{R}$, let $\xi \in L^2(\Omega, \mathcal{C}_r)$ be $\mathcal{F}_\tau$-measurable.
Under Hypotheses {\bf(H1)}-{\bf (H3)}, Proposition \ref{Pro1} ensures that system \eqref{ob-3}-\eqref{ob-4}
has a unique strong solution $u$ in $L^2(\Omega, C([\tau-r,T],\ell^2))$ for any $T>\tau-r$, with the property that
$\mathcal{L}_{u_{t}} \in \mathcal{P}_2(\mathcal{C}_r)$. By Proposition \ref{Pro1}, we have that  for all $t\ge r \ge \tau, \tau \in \mathbb{R}$,
\begin{eqnarray*}
u(t,\tau,\xi)=u(t,r,u(r, \tau,\xi)).
\end{eqnarray*}
For any given $\mu \in \mathcal{P}_2(\mathcal{C}_r)$, we denote by  $u(t, \tau, \mu)$ the solution of system \eqref{ob-3}-\eqref{ob-4} with initial distribution $\mathcal{L}_{u_\tau}=\mu$. There is a bit of abuse of notation here, but it should not cause any confusion.
Due to the weak uniqueness of solutions, for any $t \ge \tau$, we can define a nonlinear operator $P_{\tau,t}^*$
from  $\mathcal{P}_2(\mathcal{C}_r)$ to $\mathcal{P}_2(\mathcal{C}_r)$ by
\begin{eqnarray}\label{df-P}
P_{\tau,t}^*\mu=\mathcal{L}_{u_{t}(\cdot, \tau,\mathcal{\mu})},\quad \forall t \ge  0,
\end{eqnarray}
where the segment $u_{t}(\cdot, \tau,\mathcal{\mu})$ of the solution $u(t, \tau, \mu)$ is given by
$$u_t(s, \tau, \mu)=u(t+s, \tau , \mu), \quad \forall s \in [-r, 0].$$
Clearly, $P_{\tau,t}^*$ is a semigroup with the property
\begin{eqnarray}\label{md1}
P_{\tau,t}^*=P_{r,t}^*P_{\tau, r}^*, \quad \forall t\ge r \ge \tau, \tau\in\mathbb{R}.
\end{eqnarray}

For every $t\in \mathbb{R}^+$
and $\tau \in \mathbb{R}$, we define
$\Phi (t, \tau): \mathcal{P}_2(\mathcal{C}_r)
\to \mathcal{P}_2(\mathcal{C}_r)$ by
\begin{align*}
 \Phi (t,\tau) \mu=
 P^*_{\tau,  \tau +t} \mu,\quad \forall \mu\in \mathcal{P}_2(\mathcal{C}_r).
\end{align*}
By \eqref{md1}, for all $t,s\in \mathbb{R}^+$, $\tau \in \mathbb{R}$
and $\mu \in  \mathcal{P}_2(\mathcal{C}_r)$, we have
$$
\Phi (t+s, \tau)\mu =
\Phi (t, s+\tau) ( \Phi(s, \tau) \mu).
$$
 {Clearly, $\Phi$ is a non-autonomous
dynamical system on
$(\mathcal{P}_2(\mathcal{C}_r), d_{\mathcal{P} (\mathcal{C}_r)})$.
\vskip0.1in

In the following, we prove that $\Phi$ is continuous on the bounded subset $D$ of $\mathcal{P}_2(\mathcal{C}_r)$. To see this, we
first prove the following lemma.
\begin{lm}\label{P-dis}
Assume that  {\bf (H1)}-{\bf (H3)} hold. Then, for any $\mu, \upsilon \in \mathcal{P}_2(\mathcal{C}_r)$ and $t \ge \tau$,
\begin{eqnarray*}
\mathbb{W}_2(P_{\tau,t}^*\mu,P_{\tau,t}^*\upsilon) \le  \tilde{c}_1 \mathbb{W}_2(\mu, \nu) e^{\tilde{c}_2(t-\tau)},
\end{eqnarray*}
where
$
\tilde{c}_1\; :=\; 3+8 r \|\psi\|^2_{L^{\infty}([\tau, t],\ell^2)}
+6 r \|\chi\|^2_{L^{\infty}(\mathbb{R},\ell^2)}(1+2c_1^2) $\\
and
$\;\;\; \tilde{c}_2\; :=\; 4\|\Theta\|_{L^{\infty}([\tau, t],\ell^2)}+18\|\psi\|_{L^{\infty}([\tau, t],\ell^2)}
+18\|\chi\|^2_{L^{\infty}(\mathbb{R},\ell^2)} (1+2c_1^2).
$
\end{lm}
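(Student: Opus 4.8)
The plan is to realise $\mathbb{W}_2(P_{\tau,t}^*\mu,P_{\tau,t}^*\upsilon)$ through a synchronous coupling of two strong solutions. Since $(\mathcal{P}_2(\mathcal{C}_r),\mathbb{W}_2)$ is Polish, I would pick $\mathcal{F}_\tau$-measurable random variables $\xi,\zeta\in L^2(\Omega,\mathcal{C}_r)$ with $\mathcal{L}_\xi=\mu$, $\mathcal{L}_\zeta=\upsilon$ and $\mathbb{E}\|\xi-\zeta\|_{\mathcal{C}_r}^2=\mathbb{W}_2(\mu,\upsilon)^2$, i.e.\ an optimal coupling of $\mu$ and $\upsilon$. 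Let $u(\cdot)=u(\cdot,\tau,\xi)$ and $v(\cdot)=v(\cdot,\tau,\zeta)$ be the corresponding strong solutions of \eqref{ob-3}-\eqref{ob-4} driven by the \emph{same} cylindrical Wiener process $W$, which exist and are unique by Proposition \ref{Pro1}; by the weak uniqueness in Proposition \ref{weak-uniquess} their laws are exactly $\mathcal{L}_{u_t}=P_{\tau,t}^*\mu$ and $\mathcal{L}_{v_t}=P_{\tau,t}^*\upsilon$. Since $(u_t,v_t)$ is a coupling of $P_{\tau,t}^*\mu$ and $P_{\tau,t}^*\upsilon$, it then suffices to bound $\mathbb{E}\|u_t-v_t\|_{\mathcal{C}_r}^2$ by a multiple of $\mathbb{E}\|\xi-\zeta\|_{\mathcal{C}_r}^2$.

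For the required bound I would first split the segment norm: on $\{t+s\le\tau\}$ one has $u(t+s)-v(t+s)=\xi(t+s-\tau)-\zeta(t+s-\tau)$, hence $\mathbb{E}\|u_t-v_t\|_{\mathcal{C}_r}^2\le\mathbb{E}\|\xi-\zeta\|_{\mathcal{C}_r}^2+\mathbb{E}\big(\sup_{\tau\le\varrho\le t}\|u(\varrho)-v(\varrho)\|^2\big)$. For the latter I would apply the It\^o formula to $\|u(\varrho)-v(\varrho)\|^2$ exactly as in Step 2 of Proposition \ref{Pro1}: the term $-2\nu\int_\tau^\varrho\|B(u-v)\|^2\,ds\le0$ and the term $-2\lambda\int_\tau^\varrho\|u-v\|^2\,ds\le0$ are discarded; the drift difference is estimated via \eqref{f-Lip}; the diffusion difference contributes its quadratic‑variation term and, after the Burkholder–Davis–Gundy inequality combined with Young's inequality (so that $\tfrac12\mathbb{E}\sup_{[\tau,t]}\|u-v\|^2$ is absorbed into the left-hand side, as in \eqref{pro1-3}-\eqref{pro1-4}), its stochastic-integral term, both controlled by \eqref{segama-Lip}; and every Wasserstein distance between the distributions is replaced using $\rho(\mathcal{L}_{u(s)},\mathcal{L}_{v(s)})^2\le\mathbb{E}\|u(s)-v(s)\|^2$ from \eqref{dis-property}.

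The only genuinely new ingredient relative to Proposition \ref{Pro1} is the delay term. After applying \eqref{f-Lip} and \eqref{segama-Lip} one meets integrals of $\mathbb{E}\|u(s-r)-v(s-r)\|^2$, which I would split according to $\{s-r\ge\tau\}$, where $\mathbb{E}\|u(s-r)-v(s-r)\|^2\le\mathbb{E}\big(\sup_{\tau\le\sigma\le s}\|u(\sigma)-v(\sigma)\|^2\big)$, and $\{s-r<\tau\}$, where $u(s-r)-v(s-r)=\xi(s-r-\tau)-\zeta(s-r-\tau)$ so the integrand is at most $\mathbb{E}\|\xi-\zeta\|_{\mathcal{C}_r}^2$ on an interval of length at most $r$; this is precisely where the terms $8r\|\psi\|_{L^\infty([\tau,t],\ell^2)}^2$ and $6r\|\chi\|_{L^\infty(\mathbb{R},\ell^2)}^2(1+2c_1^2)$ in $\tilde c_1$ arise, and the remaining constants $4\|\Theta\|$, $18\|\psi\|$, $18\|\chi\|^2(1+2c_1^2)$ assemble into $\tilde c_2$. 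Collecting terms gives an estimate of the form
\begin{equation*}
\mathbb{E}\Big(\sup_{\tau\le\varrho\le t}\|u(\varrho)-v(\varrho)\|^2\Big)\le c\,\mathbb{E}\|\xi-\zeta\|_{\mathcal{C}_r}^2+\tilde c_2\int_\tau^t\mathbb{E}\Big(\sup_{\tau\le\sigma\le s}\|u(\sigma)-v(\sigma)\|^2\Big)\,ds,
\end{equation*}
so Gronwall's inequality yields $\mathbb{E}\|u_t-v_t\|_{\mathcal{C}_r}^2\le\tilde c_1^2\,\mathbb{E}\|\xi-\zeta\|_{\mathcal{C}_r}^2\,e^{2\tilde c_2(t-\tau)}$, and taking square roots together with the optimality of the coupling finishes the proof. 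I expect the main obstacle to be purely bookkeeping: propagating the absorbed $\tfrac12\mathbb{E}\sup$ from the BDG step and the delay splitting through Gronwall so that the multiplicative constant and exponential rate match exactly the stated $\tilde c_1$ and $\tilde c_2$; conceptually nothing goes beyond the arguments already used in Proposition \ref{Pro1}.
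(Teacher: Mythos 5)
Your proposal is correct and follows essentially the same route as the paper's proof: an optimal synchronous coupling of the initial segments, the It\^o formula for $\|u-v\|^2$ with supremum and expectation, BDG plus Young absorption of $\tfrac12\mathbb{E}\sup\|u-v\|^2$ for the martingale term, splitting the delayed integrand over $[\tau-r,\tau]$ (which produces the $r$-terms in $\tilde c_1$) versus $[\tau,t]$, and Gronwall. The only minor deviation is that to land on the first-power constants $4\|\Theta\|$ and $18\|\psi\|$ in $\tilde c_2$ the paper estimates the drift pairing $(u-v,\,f(\cdot,u,\ldots)-f(\cdot,v,\ldots))$ directly via \eqref{fi-3} and the componentwise bound \eqref{fi-Lip}, rather than passing through the squared Lipschitz bound \eqref{f-Lip} as you suggest, which would yield $\|\Theta\|^2$- and $\|\psi\|^2$-type coefficients instead.
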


\begin{proof}
Let $u(t):=u(t, \tau, \mu)$ and $v(t):=u(t, \tau, \upsilon)$ be two solutions to \eqref{ob-3}-\eqref{ob-4}, where $\mathcal{L}_{u_\tau}=\mu$ and $\mathcal{L}_{v_\tau}=\upsilon$, respectively, and
$$\mathbb{W}_2(\mu, \upsilon)^2= \mathbb{E}\|u_\tau-v_\tau\|_{\mathcal{C}_r}^2.$$
Following the It\^o formula, we obtain that for all $t \ge \tau$,
\begin{eqnarray} \label{W2-lemma-2}\small
\begin{split}
&\; \mathbb{E} (\sup_{\tau \le \varrho \le t}\|u(\varrho)-v(\varrho)\|^2)
\le \mathbb{E}\|u_\tau(0)-v_\tau(0)\|^2
\\
&\; +2\mathbb{E} \sup_{\tau \le \varrho \le t}\int_\tau^{\rho}(u(s)-v(s), [f(s, u(s), u(s-r), \mathcal{L}_{u(s)})-f(s, v(s), v(s-r), \mathcal{L}_{v(s)})]) ds
\\
&\; + \mathbb{E}  \sup_{\tau \le \varrho \le t} \int_\tau^{\varrho} \|\tilde{\sigma}(s, u(s), u(s-r), \mathcal{L}_{u(s)})
-\tilde{\sigma}(s, v(s), v(s-r), \mathcal{L}_{v(s)})\|_{L_2(\ell^2, \ell^2)}^2 ds
\\
&\; +2 \mathbb{E}  \sup_{\tau \le \varrho \le t} \bigl|\int_\tau^{\varrho} (u(s)-v(s), [\tilde{\sigma}(s, u(s), u(s-r), \mathcal{L}_{u(s)})-\tilde{\sigma}(s, v(s), v(s-r), \mathcal{L}_{v(s)})]dW(s)) \bigr|.
\end{split}
\end{eqnarray}

We estimate each term in the right-hand side of \eqref{W2-lemma-2} separately. For the second term, by using \eqref{fi-Lip}
and \eqref{dis-property}, we notice that
\begin{equation}\label{W2-lemma-3}
\begin{split}
&\, 2 \mathbb{E} \sup_{\tau \le \varrho \le t} \int_\tau^{\varrho}(u(s)-v(s), [f(s, u(s), u(s-r), \mathcal{L}_{u(s)})
-f(s, v(s), v(s-r), \mathcal{L}_{v(s)})]) ds
\\
\le &\, 2 \mathbb{E} \sup_{\tau \le \varrho \le t} \int_\tau^{\varrho}(u(s)-v(s), [f(s, u(s), u(s-r), \mathcal{L}_{u(s)})
-f(s, v(s), u(s-r), \mathcal{L}_{u(s)})]) ds
\\
&\, +2 \mathbb{E} \sup_{\tau \le \varrho \le t} \int_\tau^{\varrho}(u(s)-v(s), [f(s, v(s), u(s-r), \mathcal{L}_{u(s)})
-f(s, v(s), v(s-r), \mathcal{L}_{v(s)})]) ds
\\
\le &\,  2 \mathbb{E} \int_\tau^{t} \|\Theta(s)\|\|u(s)-v(s)\|^2  ds
\\
&\, + 2 \mathbb{E} \int_\tau^{t} \sum_{i \in \mathbb{Z}} \psi_i(s) \left[|u_i(s-r)-v_i(s-r)|+(\mathbb{E} |u_i(s)-v_i(s)|^2)^{\frac{1}{2}} \right] |u_i(s)-v_i(s)|ds
\end{split}
\end{equation}
\begin{equation*}
\begin{split}
\le &\,  2 \mathbb{E} \int_\tau^t \|\Theta(s)\| \|u(s)-v(s)\|^2 ds+\mathbb{E} \int_\tau^t \|\psi(s)\| \|u(s)-v(s)\|^2 ds
\\
&\,+4 \mathbb{E} \int_\tau^t \|\psi(s)\|(\|u(s-r)-v(s-r)\|^2+ \mathbb{E}\|u(s)-v(s)\|^2)ds
\\
\le &\, (2\|\Theta\|_{L^{\infty}([\tau, t],\ell^2)}+9\|\psi\|_{L^{\infty}([\tau, t],\ell^2)})\int_\tau^t \mathbb{E}\|u(s)-v(s)\|^2 ds
\\
&\,+ 4 r \|\psi\|^2_{L^{\infty}([\tau, t],\ell^2)}\mathbb{ E }\|u_\tau-v_\tau\|_{\mathcal{C}_r}^2.
\end{split}
\end{equation*}
For the third term, by using \eqref{segama-Lip} and \eqref{dis-property} we have that
\begin{equation} \label{W2-lemma-4}
\begin{split}
&\, \mathbb{E} \sup_{\tau \le \varrho \le t} \int_\tau^{\varrho}
\|\tilde{\sigma}(s, u(s), u(s-r), \mathcal{L}_{u(s)})-\tilde{\sigma}(s, v(s), v(s-r), \mathcal{L}_{v(s)})\|_{L_2(\ell^2, \ell^2)}^2 ds
\\
\le &\, 3\|\chi\|^2_{L^{\infty}(\mathbb{R},\ell^2)} \int_\tau^t
\mathbb{E}\left( \|u(s)-v(s)\|^2+\|u(s-r)-v(s-r)\|^2+ \mathbb{E}\|u(s)-v(s)\|^2 \right) ds
\\
\le &\, 9\|\chi\|^2_{L^{\infty}(\mathbb{R},\ell^2)}
\int_\tau^t \mathbb{E} \|u(s)-v(s)\|^2 ds+3 r \|\chi\|^2_{L^{\infty}(\mathbb{R},\ell^2)}
 \mathbb{E}\|u_\tau-v_\tau\|_{\mathcal{C}_r}^2.
\end{split}
\end{equation}
For the last term, by using the BDG inequality, \eqref{segama-Lip} and \eqref{dis-property} we obtain that
\begin{equation} \label{W2-lemma-5}\small
\begin{split}
&\,  2 \mathbb{E}\sup_{\tau \le \varrho \le t} \bigl |  \int_\tau^{\varrho} (u(s)-v(s), [\tilde{\sigma}(s, u(s), u(s-r), \mathcal{L}_{u(s)})
-\tilde{\sigma}(s, v(s), v(s-r), \mathcal{L}_{v(s)})]dW(s)) \bigr|
\\
\le &\, 2c_1\mathbb{E}\{\sup_{\tau \le \varrho \le t}\|u(\varrho)-v(\varrho)\| \cdot
(\int_\tau^t \|\tilde{\sigma}(s, u(s), u(s-r), \mathcal{L}_{u(s)})
-\tilde{\sigma}(s, v(s), v(s-r), \mathcal{L}_{v(s)})\|_{L_2(\ell^2, \ell^2)}^2 ds)^{\frac{1}{2}}\}
\\
\le &\, \frac{1}{2} \mathbb{E}(\sup_{\tau \le \varrho\le t}\|u(\varrho)-v(\varrho)\|^2)
\\
&\,+ 2c_1^2 \int_\tau^t \mathbb{E}\|\tilde{\sigma}(s, u(s), u(s-r), \mathcal{L}_{u(s)})
-\tilde{\sigma}(s, v(s), v(s-r), \mathcal{L}_{v(s)})\|_{L_2(\ell^2, \ell^2)}^2 ds
\\
\le &\, \frac{1}{2} \mathbb{E}(\sup_{\tau \le \varrho\le t}\|u(\varrho)-v(\varrho)\|^2)
\\
&\,+2c_1^2 \left( 9\|\chi\|^2_{L^{\infty}(\mathbb{R},\ell^2)}\int_\tau^t \mathbb{E} \|u(s)-v(s)\|^2 ds
+3 r\|\chi\|^2_{L^{\infty}(\mathbb{R},\ell^2)} \|u_\tau-v_\tau\|_{\mathcal{C}_r}^2\right).
\end{split}
\end{equation}
Combining \eqref{W2-lemma-2}-\eqref{W2-lemma-5}, we have that for all $t \ge \tau$,
\begin{equation}\label{W2-lemma-6}
\begin{split}
 &\,  \mathbb{E} (\sup_{\tau \le \varrho \le t}\|u(\varrho)-v(\varrho)\|^2)
\\
 \le & \frac{1}{2} \mathbb{E}(\sup_{\tau \le \varrho\le t}\|u(\varrho)-v(\varrho)\|^2)
\\
&\, + \left[2\|\Theta\|_{L^{\infty}([\tau, t],\ell^2)}+9\|\psi\|_{L^{\infty}([\tau, t],\ell^2)}
+9\|\chi\|^2_{L^{\infty}(\mathbb{R},\ell^2)} (1+2c_1^2)\right]
 \int_\tau^t \mathbb{E} \|u(s)-v(s)\|^2 ds
\\
&\, +\left[ 1+4 r \|\psi\|^2_{L^{\infty}([\tau, t],\ell^2)}+3 r \|\chi\|^2_{L^{\infty}(\mathbb{R},\ell^2)}(1+2c_1^2)\right] \mathbb{E}\|u_\tau-v_\tau\|_{\mathcal{C}_r}^2.
\end{split}
\end{equation}
Then, we obtain that for all $t \ge \tau$,
\begin{equation*}
\begin{split}
\mathbb{E} (\sup_{\tau \le \varrho \le t}\|u(\varrho)-v(\varrho)\|^2) \le &\, \tilde{c}_2
\int_\tau^t \mathbb{E} \|u(s)-v(s)\|^2 ds
\\
&+2\left[1+4 r \|\psi\|^2_{L^{\infty}([\tau, t],\ell^2)}+3 r\|\chi\|^2_{L^{\infty}(\mathbb{R},\ell^2)}(1+2c_1^2)\right] \mathbb{E}\|u_\tau-v_\tau\|_{\mathcal{C}_r}^2.
\end{split}
\end{equation*}
Observing that
\begin{eqnarray*}
\mathbb{E}(\sup_{\tau-r \le \varrho \le \tau}\|u(\varrho)-v(\varrho)\|^2)=\mathbb{E}\|u_\tau-v_\tau\|_{\mathcal{C}_r}^2,
\end{eqnarray*}
we get that
\begin{equation*}
\begin{split}
 &\,  \mathbb{E} (\sup_{\tau-r \le \varrho \le t}\|u(\varrho)-v(\varrho)\|^2) \le \tilde{c}_2
\int_\tau^t \mathbb{E} \|u(s)-v(s)\|^2 ds+\tilde{c}_1 \mathbb{E}\|u_\tau-v_\tau\|_{\mathcal{C}_r}^2.
\end{split}
\end{equation*}
Finally, Lemma \ref{P-dis} is proven using the Gronwall inequality.
\end{proof}

As a straightforward corollary, we immediately obtain that
\begin{cor}\label{cor}
For every bounded subset $S$
of $\mathcal {P}_4(\mathcal{C}_r)$, if {\bf (H1)}-{\bf (H3)} hold, then $\Phi(t,\tau)$
is continuous from  $(S, d_{\mathcal{P}(\mathcal{C}_r)})$
to $(\mathcal {P}_4(\mathcal{C}_r), d_{\mathcal{P}(\mathcal{C}_r)})$.
\end{cor}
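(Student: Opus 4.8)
The plan is to deduce the corollary from the Lipschitz estimate of Lemma \ref{P-dis}, which is phrased in the Wasserstein metric $\mathbb{W}_2$, by exploiting the fact that on a $\mathbb{W}_4$-bounded (equivalently $\mathcal{P}_4(\mathcal{C}_r)$-bounded) set the weak topology, metrised by $d_{\mathcal{P}(\mathcal{C}_r)}$, and the $\mathbb{W}_2$-topology coincide. Fix $t\in\mathbb{R}^+$, $\tau\in\mathbb{R}$ and a bounded set $S\subseteq B_{\mathcal{P}_4(\mathcal{C}_r)}(R)$ for some $R>0$. First I would note that the statement is meaningful: for $\mu\in\mathcal{P}_4(\mathcal{C}_r)$ choose an $\mathcal{F}_\tau$-measurable $\zeta\in L^4(\Omega,\mathcal{C}_r)$ with $\mathcal{L}_\zeta=\mu$; by the remark following Proposition \ref{Pro1} the solution satisfies $u\in L^4(\Omega,C([\tau-r,\tau+t],\ell^2))$, so $\Phi(t,\tau)\mu=\mathcal{L}_{u_{\tau+t}}\in\mathcal{P}_4(\mathcal{C}_r)$, i.e. $\Phi(t,\tau)$ maps $\mathcal{P}_4(\mathcal{C}_r)$ into itself.

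The core of the argument is the following. Let $\{\mu_n\}\subseteq S$ and $\mu\in S$ with $d_{\mathcal{P}(\mathcal{C}_r)}(\mu_n,\mu)\to0$. Since $d_{\mathcal{P}(\mathcal{C}_r)}$ metrises weak convergence, $\mu_n\to\mu$ weakly. To upgrade this to $\mathbb{W}_2(\mu_n,\mu)\to0$ I would invoke the Skorokhod representation theorem: there exist $\mathcal{C}_r$-valued random variables $X_n,X$ on a common probability space with $\mathcal{L}_{X_n}=\mu_n$, $\mathcal{L}_X=\mu$, and $\|X_n-X\|_{\mathcal{C}_r}\to0$ almost surely. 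Because $\mu_n,\mu\in B_{\mathcal{P}_4(\mathcal{C}_r)}(R)$, we have $\sup_n\mathbb{E}\|X_n-X\|_{\mathcal{C}_r}^4\le 8\sup_n\mathbb{E}\|X_n\|_{\mathcal{C}_r}^4+8\mathbb{E}\|X\|_{\mathcal{C}_r}^4\le16R^4<\infty$, so $\{\|X_n-X\|_{\mathcal{C}_r}^2\}_n$ is uniformly integrable; the Vitali convergence theorem then gives $\mathbb{E}\|X_n-X\|_{\mathcal{C}_r}^2\to0$, and since $(X_n,X)$ is a coupling of $\mu_n$ and $\mu$ we conclude $\mathbb{W}_2(\mu_n,\mu)^2\le\mathbb{E}\|X_n-X\|_{\mathcal{C}_r}^2\to0$.

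Finally I would apply Lemma \ref{P-dis} on the time window $[\tau,\tau+t]$ to obtain
\[
\mathbb{W}_2\big(\Phi(t,\tau)\mu_n,\,\Phi(t,\tau)\mu\big)
=\mathbb{W}_2\big(P^*_{\tau,\tau+t}\mu_n,\,P^*_{\tau,\tau+t}\mu\big)
\le\tilde{c}_1\,e^{\tilde{c}_2 t}\,\mathbb{W}_2(\mu_n,\mu)\longrightarrow0,
\]
where $\tilde{c}_1,\tilde{c}_2$ are the finite constants of that lemma (with $[\tau,t]$ there replaced by $[\tau,\tau+t]$). Convergence in $\mathbb{W}_2$ implies weak convergence, hence $d_{\mathcal{P}(\mathcal{C}_r)}(\Phi(t,\tau)\mu_n,\Phi(t,\tau)\mu)\to0$, which together with $\Phi(t,\tau)\mu\in\mathcal{P}_4(\mathcal{C}_r)$ establishes the asserted continuity from $(S,d_{\mathcal{P}(\mathcal{C}_r)})$ to $(\mathcal{P}_4(\mathcal{C}_r),d_{\mathcal{P}(\mathcal{C}_r)})$.

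The only delicate point is the passage from the weak metric $d_{\mathcal{P}(\mathcal{C}_r)}$ to the Wasserstein metric $\mathbb{W}_2$ in which Lemma \ref{P-dis} is stated; this is exactly where the restriction to a bounded subset of the higher-moment space $\mathcal{P}_4(\mathcal{C}_r)$ is essential, since it supplies the uniform integrability needed for the Vitali step. Everything else is a routine combination of the Skorokhod representation, uniform integrability, and the Lipschitz estimate already proved.
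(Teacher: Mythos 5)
Your argument is correct and follows essentially the same route as the paper: reduce to sequential continuity, use the Skorokhod representation together with the uniform fourth-moment bound on $S$ to upgrade weak convergence to $\mathbb{W}_2$-convergence via uniform integrability and the Vitali theorem, and then apply the Lipschitz estimate of Lemma \ref{P-dis}. The only (welcome) addition is your explicit check that $\Phi(t,\tau)$ maps $\mathcal{P}_4(\mathcal{C}_r)$ into itself, which the paper leaves implicit.
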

\begin{proof}
Let $\{\mu_n\}_{n=1}^{\infty}$ be a sequence that converges to $\mu$ in $(S, d_{\mathcal{P}(\mathcal{C}_r)})$. By Lemma \ref{P-dis}, for any $n \in \mathbb{N}^+$,
$t\in \mathbb{R}^+$ and $\tau \in \mathbb{R}$,
\begin{eqnarray} \label{conti-1}
\mathbb{W}_2(P_{\tau,\tau+t}^*\mu_n,P_{\tau,t}^*\mu) \le  \tilde{c}_1 \mathbb{W}_2(\mu_n, \mu) e^{\tilde{c}_2t}.
\end{eqnarray}
On the other hand, since $\mu_n$ converges weakly to $\mu$, by Skorokhod's representation theorem, there exists a probability space $(\widetilde{\Omega}, \widetilde{\mathcal{F}}, \widetilde{\mathbb{P}})$ and a sequence of random variables $u_0$ and $u_{0,n}$ defined on $(\widetilde{\Omega}, \widetilde{\mathcal{F}}, \widetilde{\mathbb{P}})$ such that their distributions are $\mu$ and $\mu_n$, respectively, and
 \begin{eqnarray} \label{conti-2}
 \lim_{n\rightarrow \infty} \|u_{0,n}-u_0\|_{\mathcal{C}_r}=0, \,\tilde{ \mathbb{P}}-a.s.
 \end{eqnarray}

Since $\mu_n \in S$, there exists a constant $R > 0$ such that $\mathbb{E} \left( |u_{0,n}|^4 \right) \le R$. This implies that the sequence $\{u_{0,n} \}_{n=1}^\infty$ is uniformly integrable in $L^2(\tilde{\Omega}, \mathcal{C}_r)$ as $n \to \infty$.
Using the continuity result in \eqref{conti-2} (which has already been established), and applying the Vitali convergence theorem, we conclude that $u_{0,n} \to u_0$ in $L^2(\tilde{\Omega}, \mathcal{C}_r)$. Therefore, as $n \to \infty$,
\begin{eqnarray*}
\mathbb{W}_2(\mu_n, \mu) \le \{\mathbb{E}^{\tilde{\mathbb{P}}}\|u_{0,n}-u_0\|_{\mathcal{C}_r}^2\}^{\frac{1}{2}}\rightarrow 0,
\end{eqnarray*}
where $\mathbb{E}^{\tilde{\mathbb{P}}}$ is the expectation with respect to $\tilde{\mathbb{P}}$.
Using \eqref{conti-1}, we get that $\mathbb{W}_2(\Phi(\tau,t)\mu_n, \Phi(\tau,t)\mu)\rightarrow 0$ as $n \rightarrow \infty$.
Hence, we obtain $\Phi(\tau,t)\mu_n$ converges to $\Phi(\tau,t)\mu$ weakly. Thus, the proof is complete.
\end{proof}

Based on Lemma \ref{P-dis}, we conclude that the mapping $\Phi$ defines a non-autonomous dynamical system on $(\mathcal{P}_4 (\mathcal{C}r), d_{\mathcal{P} (\mathcal{C}_r)})$, which is continuous on bounded subsets of $\mathcal{P}_4 (\mathcal{C}_r)$. Next, we proceed to prove the existence of $\mathcal{D}$-pullback absorbing sets for $\Phi$.
\begin{lm}\label{absorbset}
Assume that  {\bf  (H1)}-{\bf (H3)}, \eqref{lamda} and \eqref{g^4} hold. Then $\Phi$ has a closed
$\mathcal{D}$-pullback absorbing set $K=\{K(\tau): \tau \in \mathbb{R}\} \in \mathcal{D}$
which is given by, for each $\tau\in\mathbb{R}$,
\begin{align} \label{absorbset-1}
K(\tau )=\left\{\mu \in {\mathcal{P}}_4 (\mathcal{C}_r): \int_{\mathcal{C}_r} \|\xi\|_{\mathcal{C}_r}^4 \mu (d \xi) \le M_4 (\tau)\right\},
\end{align}
where
 $$M_4(\tau)=M_3+ M_3\int_{-\infty }^\tau e^{2\varepsilon(s-\tau)}\left(\|g(s)\|^4+\|\eta(s)\|^2_1 \right)ds,$$
and $\varepsilon>0$ is the same number as in \eqref{lamda-1}, and $ M_3>0 $  is  the same constant as in Lemma \ref{est-3}
which is independent of $\tau$.
\end{lm}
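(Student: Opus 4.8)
The plan is to verify in turn that the family $K$ defined by \eqref{absorbset-1} consists of nonempty bounded closed subsets of $\mathcal P_4(\mathcal C_r)$, that $K\in\mathcal D$, and that $K$ pullback‑absorbs every $D\in\mathcal D$; the last and only substantial point is an immediate consequence of the $L^4$ estimate established in Lemma \ref{est-3}.

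First, for each $\tau\in\mathbb R$ condition \eqref{g^4} ensures $M_4(\tau)<\infty$, so $K(\tau)$ is a bounded subset of $\mathcal P_4(\mathcal C_r)$ with $\|K(\tau)\|_{\mathcal P_4(\mathcal C_r)}^4=M_4(\tau)$, and it is nonempty since $M_4(\tau)\ge M_3>0$ and, for instance, the Dirac mass at the zero element of $\mathcal C_r$ belongs to $K(\tau)$. To check $K\in\mathcal D$, I would compute
$$
e^{2\varepsilon\tau}\|K(\tau)\|_{\mathcal P_4(\mathcal C_r)}^4
= M_3 e^{2\varepsilon\tau}+M_3\int_{-\infty}^{\tau}e^{2\varepsilon s}\bigl(\|g(s)\|^4+\|\eta(s)\|_1^2\bigr)\,ds .
$$
The first term tends to $0$ as $\tau\to-\infty$; for the second, \eqref{g^4} says the nonnegative function $s\mapsto e^{2\varepsilon s}\bigl(\|g(s)\|^4+\|\eta(s)\|_1^2\bigr)$ is integrable on $(-\infty,\tau_0)$ for any fixed $\tau_0$, so the dominated convergence theorem (dominating by this integrable function and using that $\mathbf 1_{(-\infty,\tau)}\to 0$ pointwise as $\tau\to-\infty$) gives $\int_{-\infty}^{\tau}e^{2\varepsilon s}\bigl(\|g(s)\|^4+\|\eta(s)\|_1^2\bigr)\,ds\to0$. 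Hence $\lim_{\tau\to-\infty}e^{2\varepsilon\tau}\|K(\tau)\|_{\mathcal P_4(\mathcal C_r)}^4=0$, i.e. $K\in\mathcal D$.

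For closedness, note that $K(\tau)$ coincides with the closed ball $B_{\mathcal P_4(\mathcal C_r)}\bigl(M_4(\tau)^{1/4}\bigr)$ of Section 2, which by the remark there is a closed subset of $(\mathcal P(\mathcal C_r),d_{\mathcal P(\mathcal C_r)})$, and therefore a closed subset of $(\mathcal P_4(\mathcal C_r),d_{\mathcal P(\mathcal C_r)})$. For the absorbing property, fix $\tau\in\mathbb R$ and $D\in\mathcal D$, and let $\mu_{\tau-t}\in D(\tau-t)$. Since $D(\tau-t)$ is bounded in $\mathcal P_4(\mathcal C_r)$, choose an $\mathcal F_{\tau-t}$-measurable random variable $u_{\tau-t}\in L^4(\Omega,\mathcal C_r)$ with law $\mu_{\tau-t}$. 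By the definition \eqref{df-P} of $P^*$ together with weak uniqueness (Proposition \ref{weak-uniquess}),
$$
\Phi(t,\tau-t)\mu_{\tau-t}=P^*_{\tau-t,\tau}\mu_{\tau-t}=\mathcal L_{u_\tau(\cdot,\tau-t,u_{\tau-t})},
$$
so that $\int_{\mathcal C_r}\|\xi\|_{\mathcal C_r}^4\,\bigl(\Phi(t,\tau-t)\mu_{\tau-t}\bigr)(d\xi)=\mathbb E\bigl(\|u_\tau(\cdot,\tau-t,u_{\tau-t})\|_{\mathcal C_r}^4\bigr)$. By Lemma \ref{est-3} there is $T=T(\tau,D)>0$ such that for all $t\ge T$ the right‑hand side is bounded by $M_4(\tau)$, whence $\Phi(t,\tau-t)\mu_{\tau-t}\in K(\tau)$; since $\mu_{\tau-t}\in D(\tau-t)$ was arbitrary, $\Phi(t,\tau-t)D(\tau-t)\subseteq K(\tau)$ for all $t\ge T$, which is exactly the required absorbing property.

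The only places requiring a little care are the realization of the prescribed initial law $\mu_{\tau-t}\in D(\tau-t)$ by an $\mathcal F_{\tau-t}$-measurable $L^4(\Omega,\mathcal C_r)$ random variable (so that Lemma \ref{est-3} is directly applicable), and the dominated‑convergence argument used to verify $K\in\mathcal D$; all the genuine estimates have already been carried out in Lemma \ref{est-3}, so there is no serious obstacle here.
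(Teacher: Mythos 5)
Your proposal is correct and follows essentially the same route as the paper: closedness of $K(\tau)$ as a ball in $\mathcal P_4(\mathcal C_r)$, the absorbing property as a direct consequence of the $L^4$ estimate in Lemma \ref{est-3} after realizing each law in $D(\tau-t)$ by an $\mathcal F_{\tau-t}$-measurable random variable, and membership $K\in\mathcal D$ from \eqref{g^4} via the decay of $e^{2\varepsilon\tau}\|K(\tau)\|_{\mathcal P_4(\mathcal C_r)}^4$. You merely spell out the dominated-convergence and law-realization details that the paper leaves implicit.
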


\begin{proof}
First, note that $K(\tau)$ is a closed subset of $\mathcal{P}_4 (\mathcal{C}_r)$. On the other hand,
for every $\tau \in \mathbb{R}$ and $D \in \mathcal{D}$, by using Lemma \ref{est-3}, we have that there exists $T=T(\tau, D)>0$ such that for all $t\ge T$,
\begin{align}\label{absorbset-3}
\Phi(t,\tau-t) D(\tau-t) \subseteq K(\tau).
\end{align}
Finally, by \eqref{g^4} and \eqref{absorbset-1} we have, as $\tau \to -\infty$,
$$e^{2\varepsilon \tau }\| K(\tau)\|_{\mathcal{P}_4\mathcal{(C}_r)}^4 \le e^{2\varepsilon \tau} M_3 + M_3
\int_{-\infty }^\tau e^{2\eta s} \left(\|g(s)\|^4+\|\eta(s)\|_1^2\right) ds \to 0.$$
This implies that $K = {K(\tau) : \tau \in \mathbb{R}} \in \mathcal{D}$. Therefore, from \eqref{absorbset-3}, we conclude that $K$, as defined in \eqref{absorbset-1}, is a closed $\mathcal{D}$-pullback absorbing set for $\Phi$.
\end{proof}

Next, we establish the $\mathcal{D}$-pullback asymptotic compactness of $\Phi$.

\begin{lm}
\label{tight}
Assume that {\bf  (H1)}-{\bf (H3)}, \eqref{lamda} and \eqref{g^2}-\eqref{g^4} hold. Then $\Phi$
is $\mathcal{D}$-pullback asymptotically compact in $(\mathcal{P}_4(\mathcal{C}_r), d_{\mathcal{P}(\mathcal{C}_r)})$.
\end{lm}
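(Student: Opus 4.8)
The plan is to verify the definition of $\mathcal{D}$-pullback asymptotic compactness directly. Fix $\tau\in\mathbb{R}$, a sequence $t_n\to\infty$, a family $D\in\mathcal{D}$, and $\mu_n\in D(\tau-t_n)$. Since $\mu_n\in\mathcal{P}_4(\mathcal{C}_r)$, we may pick an $\mathcal{F}_{\tau-t_n}$-measurable $u_{\tau-t_n}\in L^4(\Omega,\mathcal{C}_r)$ with $\mathcal{L}_{u_{\tau-t_n}}=\mu_n$, and let $u^n(\cdot):=u(\cdot,\tau-t_n,u_{\tau-t_n})$ be the corresponding strong solution of \eqref{ob-3}--\eqref{ob-4}; by weak uniqueness $\Phi(t_n,\tau-t_n)\mu_n=\mathcal{L}_{u^n_\tau}\in\mathcal{P}_4(\mathcal{C}_r)$. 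It is enough to prove that the family $\{\mathcal{L}_{u^n_\tau}\}_n$ is tight on $\mathcal{C}_r$ and that $\sup_n\mathbb{E}\big(\|u^n_\tau\|_{\mathcal{C}_r}^4\big)<\infty$: then, by Prokhorov's theorem, a subsequence of $\mathcal{L}_{u^n_\tau}$ converges weakly, hence converges in $d_{\mathcal{P}(\mathcal{C}_r)}$, and Fatou's lemma (applied to $\xi\mapsto\|\xi\|_{\mathcal{C}_r}^4\wedge R$, then letting $R\to\infty$) shows the limit has finite fourth moment, so the convergence takes place in $(\mathcal{P}_4(\mathcal{C}_r),d_{\mathcal{P}(\mathcal{C}_r)})$, as required. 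The uniform fourth-moment bound is immediate from Lemma~\ref{est-3}: since $D\in\mathcal{D}$, there is $T=T(\tau,D)>r$ with $\mathbb{E}\big(\|u^n_\tau\|_{\mathcal{C}_r}^4\big)\le M_3+M_3\int_{-\infty}^\tau e^{2\varepsilon(s-\tau)}\|g(s)\|^4\,ds$ for all $n$ with $t_n\ge T$, and discarding the finitely many remaining indices makes it uniform.

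The main work is the tightness of $\{\mathcal{L}_{u^n_\tau}\}$ on $\mathcal{C}_r=C([-r,0],\ell^2)$, for which I would use an Arzel\`a--Ascoli criterion for laws on $C([-r,0],\ell^2)$. Two ingredients are needed. First, the time marginals must be tight in $\ell^2$: Lemma~\ref{est-1+} (or Lemma~\ref{est-3}) gives a uniform bound on $\mathbb{E}\big(\sup_{\varrho\in[\tau-r,\tau]}\|u^n(\varrho)\|^2\big)$, while Lemma~\ref{est-2} provides, for every $\delta>0$, an index $N$ and an integer $k$ with $\mathbb{E}\big(\sup_{\varrho\in[\tau-r,\tau]}\sum_{|i|\ge k}|u^n_i(\varrho)|^2\big)<\delta$ for all $n\ge N$; since a subset of $\ell^2$ that is bounded and has uniformly small tails is precompact, the laws of $u^n(\varrho)$ are tight in $\ell^2$, uniformly in $\varrho\in[\tau-r,\tau]$. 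Second, I would establish a uniform modulus-of-continuity estimate: from the integral form \eqref{inte-ob-eq}, write $u^n(\varrho_1)-u^n(\varrho_2)$ as the deterministic increment $-\int_{\varrho_2}^{\varrho_1}\big(\nu Au^n(s)+\lambda u^n(s)\big)ds+\int_{\varrho_2}^{\varrho_1}\big(f(s,u^n(s),u^n(s-r),\mathcal{L}_{u^n(s)})+g(s)\big)ds$ plus the stochastic increment $\int_{\varrho_2}^{\varrho_1}\tilde\sigma(s,u^n(s),u^n(s-r),\mathcal{L}_{u^n(s)})\,dW(s)$; bounding the first part by H\"older's inequality with \eqref{bounded-A}, \eqref{bound-1} and the uniform fourth-moment bounds of Lemma~\ref{est-3} (used also with $\tau$ replaced by $\tau-r$, to control $u^n(s-r)$ for $s\in[\tau-r,\tau]$), and the second by the Burkholder--Davis--Gundy inequality with \eqref{bound-2}, should yield $\mathbb{E}\|u^n(\varrho_1)-u^n(\varrho_2)\|^4\le C\,|\varrho_1-\varrho_2|^{2}$ for all $\varrho_1,\varrho_2\in[\tau-r,\tau]$, with $C$ independent of $n$ once $t_n\ge T$. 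By the Kolmogorov--Chentsov continuity/tightness criterion this gives $\lim_{h\to0}\limsup_n\mathbb{P}\big(\,\sup_{|\varrho_1-\varrho_2|\le h}\|u^n(\varrho_1)-u^n(\varrho_2)\|\ge\epsilon\,\big)=0$ for every $\epsilon>0$. Combining the equicontinuity with the marginal tightness yields tightness of $\{\mathcal{L}_{u^n_\tau}\}$ on $\mathcal{C}_r$, which completes the argument.

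I expect the equicontinuity step to be the main obstacle. One must carry the delay term $u^n(s-r)$ and the distribution-dependence $\mathcal{L}_{u^n(s)}$ through the increment estimates, so the bounds of Lemma~\ref{est-3} have to be invoked on a slightly enlarged time interval, and one must pass from a pointwise fourth-moment bound on the increment to a bound on its supremum over time-pairs, which is precisely where the Kolmogorov criterion enters. The role of the tail estimate in Lemma~\ref{est-2} is to upgrade the precompactness coming from finitely many coordinates to genuine precompactness in $\ell^2$, compensating for the lack of compact Sobolev embeddings on $\mathbb{Z}$; without it the above argument would only yield tightness in a weaker topology than that of $C([-r,0],\ell^2)$.
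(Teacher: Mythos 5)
Your proposal is correct, but at the decisive tightness step it takes a genuinely different route from the paper. The paper never proves a modulus-of-continuity estimate for the solution segments: it decomposes $u_\tau(\cdot,\tau-t_n,v_n)=\theta_m u_\tau+(1-\theta_m)u_\tau$ with the cut-off $\theta_m$, uses Lemma \ref{est-1+} and Chebyshev to place the truncated part $(1-\theta_m)u_\tau$ with probability at least $1-\delta$ in a norm ball $Z_\delta$ of finitely supported paths (claimed compact via Arzel\`a--Ascoli), invokes the tail estimate of Lemma \ref{est-2} to make $\mathbb{E}\|\theta_{m_0}u_\tau\|_{\mathcal{C}_r}$ small, and then runs a finite $\tfrac{\epsilon}{3}$-net covering argument directly in the metric $d_{\mathcal{P}(\mathcal{C}_r)}$ to get total boundedness of the laws; the limit is placed in $\mathcal{P}_4(\mathcal{C}_r)$ via the weak closedness of the absorbing set $K(\tau)$ rather than your truncated-Fatou argument (both work). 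You instead prove tightness in $C([-r,0],\ell^2)$ by the classical two-ingredient criterion: marginal tightness in $\ell^2$ (uniform bounds plus the uniform tail estimate of Lemma \ref{est-2}, which plays the same role in both proofs) together with a Kolmogorov--Chentsov increment bound $\mathbb{E}\|u^n(\varrho_1)-u^n(\varrho_2)\|^4\le C|\varrho_1-\varrho_2|^2$. What your route buys is precisely the equicontinuity input that any Arzel\`a--Ascoli argument in $C([-r,0],\cdot)$ ultimately needs: a norm ball of continuous finite-dimensional paths is bounded but not equicontinuous as a family, so the paper's compactness claim for $Z_\delta$ is really leaning on the kind of uniform increment control you make explicit. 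The price you pay is having to carry the delay term and the law-dependence through the increment estimate, hence invoking the $L^4$ bounds of Lemma \ref{est-3} on the enlarged interval $[\tau-2r,\tau]$, which you correctly identify; with that in place your argument closes.
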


\begin{proof}
Given $\tau \in \mathbb{R}$, $D  \in  \mathcal{D}$,  $t_n \to  \infty$ and $\mu_n \in D (\tau-t_n)$,
we will prove that the sequence  $\{\Phi (t_n, \tau -t_n)\mu_n \}_{n=1}^\infty$ has a convergent
subsequence in $(\mathcal{P}_4 (\mathcal{C}_r), d_{\mathcal{P} (\mathcal{C}_r)})$.

Without loss of generality, assume that $v_n  \in L^4  (\Omega, \mathcal{F}_{ \tau - t_n}, \mathcal{C}_r)$ with $ \mathcal{L}_{ v_n } =\mu_n$.
Consider the solution $u(\tau, \tau-t_n, v_n)$ of \eqref{ob-3} with initial data
$v_n$ at initial time $\tau -t_n$. We first show the distributions of $\{u_\tau(\cdot, \tau-t_n, v_n)\}_{n=1}^\infty$
are  tight in $\mathcal{C}_r$.

Let $\theta: \mathbb{R}^n \to [0,1]$ be the smooth cut-off function given by \eqref{tail-est-1},
and $\theta_m=\left(\theta\left ({\frac im}\right )\right)_{i\in \mathbb{Z}}$
for every $m\in \mathbb{N}^+$. Then the solution segment $u_\tau$ can be decomposed as:
$$u_{\tau}(\cdot, \tau-t_n, v_n)=\theta_m u_{\tau}(\cdot, \tau-t_n, v_n)+(1-\theta_m)u_\tau(\cdot, \tau-t_n, v_n).$$
By Lemma \ref{est-1+}, there exists $N_1=N_1(\tau, D)$ such that for all $m \in \mathbb{N}^+$ and  $n \geq N_1$, we have
\begin{equation}\label{1-theta-1}
\begin{split}
\mathbb{E}\|(1-\theta_m)u_\tau(\cdot, \tau-t_n, v_n)\|_{\mathcal{C}_r}^2
= &\; \mathbb{E }\sup_{\tau-r \le \varrho \le \tau} \sum_{i \in \mathbb{Z}} (1-\theta(\frac i m))^2 u(\varrho, \tau-t_n, v_n)^2
\\
\le &\; \mathbb{E}\|u_\tau(\cdot, \tau-t_n, v_n)\|_{\mathcal{C}_r}^2 \le C_1,
\end{split}
\end{equation}
where $C_1=C_1(\tau)$ is a constant depending only on $\tau$, but not on $n$ or $D$.
By \eqref{1-theta-1} and the Chebyshev inequality, we obtain that
  for all $m\in \mathbb{N}^+$, $n\ge N_1$ and $R>0$,
$$\mathbb{P}\left(\{\|(1-\theta_m )u_\tau(\cdot, \tau - t_n, v_n)\|_{\mathcal{C}_r}^2>R\}\right) \le {\frac {C_1}{R^2}}.$$
Therefore, for every $\delta>0$, there exists $R(\delta, \tau)>0$ such that for all $m\in \mathbb{N}^+$ and $n\ge N_1$,
\begin{align}\label{1-theta-2}
\mathbb{P}\left(\{\|(1-\theta_m)u_{\tau}(\cdot, \tau-t_n,v_n)\|_{\mathcal{C}_r}>R(\delta, \tau)\}\right)<\delta.
\end{align}
For any given $m \in \mathbb{N}^+$, we consider the closed subset of $\mathcal{C}_r$:
$$Z_\delta=\{w\in \mathcal{C}_r:\; \|w\|_{\mathcal{C}_r}\le R(\delta, \tau), \  \ w_i(t) =0 \ \text{for}\ |i|>2m\}.$$
Next, we claim that $Z_\delta$ is a compact subset of $\mathcal{C}_r$. Indeed, we may consider the auxiliary set
\begin{eqnarray*}
\tilde{Z}_\delta =\{w=(w_i)_{|i| \le 2m} \in C([-r,0], \mathbb{R}^{2m+1}):\;  |w|_{C_r}:=\sup_{t\in [-r,0]}\|w(t)\|
\le R(\delta, \tau)\},
\end{eqnarray*}
where $\|w(t)\|=(\sum_{|i|\le 2m} |w_i(t)|^2)^{\frac{1}{2}}$. There is some redundancy in the notation here, but it should not cause confusion.
For any given $w\in \tilde{Z}_\delta$,
we note that $w_i$ is uniformly continuous on the interval $[-r,0]$ for all $|i| \le 2m$. Based on this fact, for any $\epsilon>0$,
there exists $\tilde{\delta}>0$ such that for all $|i| \le 2m$,
\begin{eqnarray*}
|w_i(s_1)-w_i(s_2)|<\frac{\epsilon}{2^{\frac{2m+1}{2}}}
\end{eqnarray*}
whenever $s_1,s_2 \in [-r,0]$ and $|s_1-s_2|<\tilde{\delta}$. This yields that
\begin{eqnarray*}
\|w(s_1)-w(s_2)\|^2=\sum_{|i| \le 2m} |w_i(s_1)-w_i(s_2)|^2 <\epsilon^2
\end{eqnarray*}
whenever $s_1,s_2 \in [-r,0]$ and $|s_1-s_2|<\tilde{\delta}$. Thus, $\tilde{Z}_\delta$ is equicontinuous.
On the other hand, $\tilde{Z}_\delta$ is a uniformly bounded set by the definition. Therefore, by
the Arzela-Ascoli theorem,   $\tilde{Z}_\delta$ is a compact set.Our assertion follows easily from the relationship between $Z_\delta$ and $\tilde{Z}_\delta$.

\vskip0.1in

By \eqref{1-theta-2} we have that for all $m\in \mathbb{N}^+$ and $n\ge N_1$,
\begin{align}\label{1-theta-3}
 \mathbb{P} \left(\left\{(1-\theta_m)u_\tau(\cdot,\tau-t_n,v_n)\in Z_\delta\right\} \right)>1-\delta.
\end{align}
Since $\delta>0$ is arbitrary, by \eqref{1-theta-3} we obtain that for every $m\in \mathbb{N}^+$,
\begin{align}\label{1-theta-4}
\{\mathcal{L}_{(1-\theta_m)u_\tau(\cdot,\tau-t_n,v_n)}\}_{n=1}^\infty \ \text{is tight in} \ \mathcal{C}_r,
\end{align}
where $\mathcal{L}_{(1-\theta_m)u_\tau(\cdot, \tau-t_n, v_n)}$ is the distribution of $(1-\theta_m)u_\tau(\cdot, \tau-t_n, v_n)$ in $\mathcal{C}_r$.

Now, we prove that $\{\mathcal{L}_{u_\tau(\cdot, \tau - t_n, v_n)}\}_{n=1}^\infty$ is tight in $\mathcal{C}_r$ by using the uniform tail-ends estimates.
Indeed, by Lemma \ref{est-2} we infer that for every $\epsilon>0$, there exist $N_2=N_2(\epsilon, \tau, D)\ge N_1$ and
$m_0=m_0(\epsilon, \tau, D)\in \mathbb{N}^+$ such that for all $n\ge N_2$,
\begin{eqnarray}\label{1-theta-5}
\mathbb{E} (\sup_{\tau-r \le \varrho \le \tau} \sum_{|i| \ge 2 m_0} |u_i(\varrho, \tau - t_n, v_n)|^2) <\frac{1}{9}\epsilon^2.
\end{eqnarray}
By \eqref{1-theta-5} we get that for all $n\ge N_2$,
\begin{align}\label{1-theta-6}
\mathbb{E} \left(\|\theta_{m_0}u_\tau(\cdot, \tau -t_n, v_n)\|^2\right)<{\frac 19}\epsilon^2.
\end{align}
By \eqref{1-theta-4}, we obtain that
$\{\mathcal{L}_{(1-\theta_{m_0})u_\tau(\cdot, \tau-t_n, v_n)}\}_{n=N_2}^\infty$ is tight in $\mathcal{C}_r$. Hence,
there exist
$n_1,\cdots , n_l \ge N_2 $ such that
\begin{align}\label{1-theta-7}
\{\mathcal{L}_{(1-\theta_{m_0})u_\tau(\cdot, \tau-t_n, v_n)}\}_{n=N_2}^\infty
\subseteq
\bigcup_{j=1}^{l} B\left(\mathcal{L}_{(1-\theta_{m_0})u_\tau(\cdot,\tau-t_{n_j},v_{n_j})}, \ {\frac 13} \epsilon \right),
\end{align}
where
$B\left(\mathcal{L}_{(1-\theta_{m_0})u_\tau(\cdot, \tau-t_{n_j}, v_{n_j})}, \ {\frac 13} \epsilon \right)$ is the
${\frac 13} \epsilon$-neighborhood of $ \mathcal{L}_{(1-\theta_{m_0})u_\tau(\cdot, \tau-t_{n_j}, v_{n_j})}$ in the space
 $(\mathcal{P} (\mathcal{C}_r), d_{\mathcal{P} (\mathcal{C}_r)})$.
  We note that
  \begin{align}\label{1-theta-8}
 \{\mathcal{L}_{u_\tau(\cdot, \tau-t_n, v_n)}\}_{n=N_2}^\infty \subseteq \bigcup_{j=1}^{l}
 B\left (\mathcal{L}_{u_\tau(\cdot, \tau-t_{n_j}, v_{n_j})},  \  \epsilon \right).
 \end{align}
Given $n\ge N_2$, by \eqref{1-theta-7} we know that there exists $j_0\in \{1,\cdots, l\}$ such that
\begin{align}\label{1-theta-9}
\mathcal{L}_{
(1-\theta_{m_0})  u_\tau(\cdot, \tau-t_n, v_n)} \in B\left (\mathcal{L}_{(1-\theta_{m_0})u_\tau(\cdot,\tau-t_{n_{j_0}}, v_{n_{j_0}})},
\ {\frac 13} \epsilon \right ).
 \end{align}
By \eqref{1-theta-6} and \eqref{1-theta-9} we have
\begin{equation*}
\begin{split}
 & d_{\mathcal{P}(\mathcal{C}_r) }\left(\mathcal{L}_{u_\tau(\cdot, \tau-t_n, v_n)}, \ \mathcal{L}_{u_\tau(\cdot, \tau-t_{n_{j_0}}, v_{n_{j_0}})} \right )
 \\
 =& \, \sup_{\psi \in L_b(\mathcal{C}_r), \|\psi\|_{L_b({\mathcal{C}_r})} \le 1}
 \left| \int_{\mathcal{C}_r} \psi d \mathcal{L}_{u_\tau(\cdot, \tau-t_n, v_n)}
 -\int_{\mathcal{C}_r} \psi  d\mathcal{L}_{u_\tau(\cdot, \tau-t_{n_{j_0}}, v_{n_{j_0}})} \right|
 \\
 =& \, \sup_{\psi \in L_b(\mathcal{C}_r), \|\psi\|_{L_b(\mathcal{C}_r)} \le 1} \left|\mathbb{E} \left(\psi(u_\tau(\cdot, \tau-t_n, v_n))\right)
 -\mathbb{E}\left(\psi(u_\tau(\cdot, \tau-t_{n_{j_0}}, v_{n_{j_0}}))\right) \right|
\\
\le &  \sup_{\psi \in L_b(\mathcal{C}_r), \|\psi\|_{L_b(\mathcal{C}_r)} \le 1} \left|\mathbb{ E}\left(\psi(u_\tau(\cdot, \tau-t_n,  v_n))\right)
 -\mathbb{E} \left(\psi( (1-\theta_{m_0} )u_\tau(\cdot, \tau-t_{n}, v_{n})) \right) \right|
 \\
 &+ \sup_{\psi \in L_b(\mathcal{C}_r), \|\psi\|_{L_b(\mathcal{C}_r)} \le 1}\left|\mathbb{E} \left(\psi( (1-\theta_ {m_0})u_\tau(\cdot, \tau-t_n, v_n))\right)
 -\mathbb{E} \left(\psi((1-\theta_ {m_0})u_\tau(\cdot, \tau-t_{n_{j_0}}, v_{n_{j_0}}))\right) \right|
 \\
 &+ \sup_{\psi \in L_b(\mathcal{C}_r), \|\psi\|_{L_b(\mathcal{C}_r)}\le 1}
 \left|\mathbb{ E} \left(\psi( (1-\theta_ {m_0})u_\tau(\cdot, \tau-t_{n_{j_0}}, v_{n_{j_0}}))\right)
 -\mathbb{E}\left(\psi(u_\tau(\cdot, \tau-t_{n_{j_0}}, v_{n_{j_0}}))\right)\right|
 \\
 \le &  \mathbb{E} \left(\|\theta_ {m_0} u_\tau(\cdot, \tau-t_{n}, v_{n})\|_{\mathcal{C}_r}\right)
  +\mathbb{E}\left(\|\theta_ {m_0}u_\tau(\cdot, \tau-t_{n_{j_0}}, v_{n_{j_0}})\|_{\mathcal{C}_r} \right)
 \\
 & +d_{\mathcal{P} (\mathcal{C}_r)}\left (\mathcal{L}_{(1-\theta_{m_0} )u_\tau(\cdot, \tau - t_n,  v_n)}, \
 \mathcal{L}_{(1-\theta_{m_0})u_\tau(\cdot, \tau-t_{n_{j_0}}, v_{n_{j_0}})}\right)
 \\
 <&{\frac 13} \epsilon +{\frac 13} \epsilon + {\frac 13} \epsilon =\epsilon
 \end{split}
 \end{equation*}
which yields \eqref{1-theta-9}. Since $\epsilon>0$ is arbitrary, by \eqref{1-theta-8} we see that
the sequence $\{\mathcal{L}_{u_\tau(\cdot, \tau-t_n, v_n)}\}_{n=1}^\infty$ is tight in $ {\mathcal{P}(\mathcal{C}_r)}$,
which implies that there exists $\upsilon \in \mathcal{P}(\mathcal{C}_r)$ such that,
up to a subsequence,
 \begin{align}\label{1-theta-10}
 \mathcal{L}_{u_\tau(\cdot, \tau-t_n, v_n)}\to \upsilon\ \text{weakly}.
 \end{align}

 It remains to show that $\upsilon \in \mathcal{P}_4(\mathcal{C}_r)$.
 Let $K=\{K(\tau): \tau \in \mathbb{R}\}$  be the closed $\mathcal{D}$-pullback
 absorbing set of $\Phi$ given by \eqref{absorbset-1}. Then, there exists
 $N_3=N_3 (\tau, D)\in \mathbb{N}^+$ such that for all $n\ge N_3$,
 \begin{align}\label{1-theta-11}
 \mathcal{L}_{u_\tau(\cdot, \tau -t_n, v_n)} \in K(\tau).
\end{align}
 Since $K(\tau)$ is closed with respect to the weak topology of $(\mathcal{P}(\mathcal{C}_r), d_{\mathcal{P}(\mathcal{C}_r)})$,
 by \eqref{1-theta-10}-\eqref{1-theta-11} we obtain $\upsilon \in K(\tau)$. Thus $\upsilon \in \mathcal{P}_4 (H)$.
Thus, the proof of this lemma is complete.
\end{proof}

Summarizing the above, we have established the existence and uniqueness of $\mathcal{D}$-pullback measure attractors of $\Phi$.
\begin{thm}\label{main_e}
Assume that  {\bf (H1)}-{\bf (H3)}, \eqref{lamda} and \eqref{g^2}-\eqref{g^4} hold. Then, $\Phi$ has a unique $\mathcal{D}$-pullback
measure attractor $\mathcal{A}= \{\mathcal{A} (\tau): \tau \in \mathbb{R} \}  \in \mathcal{D}$.
\end{thm}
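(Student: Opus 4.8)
The plan is to deduce Theorem \ref{main_e} directly from the abstract criterion in Proposition \ref{exatt}, applied with the separable Banach space taken to be $\mathcal{C}_r$ and with $p=4$. The task then reduces to checking, one by one, the four hypotheses of that proposition for the non-autonomous dynamical system $\Phi$ on $(\mathcal{P}_4(\mathcal{C}_r), d_{\mathcal{P}(\mathcal{C}_r)})$, each of which has essentially been prepared in the preceding lemmas.

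First I would recall that $\Phi(t,\tau)\mu = P^*_{\tau,\tau+t}\mu$ is well defined on $\mathcal{P}_2(\mathcal{C}_r)$ by the weak uniqueness of solutions (Proposition \ref{weak-uniquess}), and that it maps $\mathcal{P}_4(\mathcal{C}_r)$ into itself because, by the remark following Proposition \ref{Pro1}, an $L^4$-initial segment produces an $L^4$-solution segment. The identity $\Phi(0,\tau)=I$ is immediate, and the cocycle property $\Phi(t+s,\tau)=\Phi(t,s+\tau)\circ\Phi(s,\tau)$ follows from the semigroup relation \eqref{md1} for $\{P^*_{\tau,t}\}$; hence $\Phi$ is a non-autonomous dynamical system on $(\mathcal{P}_4(\mathcal{C}_r), d_{\mathcal{P}(\mathcal{C}_r)})$.

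Next, continuity of $\Phi$ on bounded subsets of $\mathcal{P}_4(\mathcal{C}_r)$ is exactly Corollary \ref{cor}, which rests on the Wasserstein Lipschitz estimate of Lemma \ref{P-dis} together with a Skorokhod representation and a Vitali convergence argument using the uniform $L^4$-bound available on bounded sets. The existence of a closed $\mathcal{D}$-pullback absorbing set $K=\{K(\tau):\tau\in\mathbb{R}\}\in\mathcal{D}$ is Lemma \ref{absorbset}, whose proof relies on the uniform $L^4$-estimate of solution segments in Lemma \ref{est-3} and on the integrability condition \eqref{g^4}. Finally, the $\mathcal{D}$-pullback asymptotic compactness of $\Phi$ in $(\mathcal{P}_4(\mathcal{C}_r), d_{\mathcal{P}(\mathcal{C}_r)})$ is Lemma \ref{tight}, established via tightness of the laws of the solution segments: one splits with the cut-off $\theta_m$, controls the near part by the Arzel\`a--Ascoli theorem applied in finitely many lattice coordinates, bounds the far part by the uniform tail estimate of Lemma \ref{est-2}, and then identifies the weak limit inside the absorbing set so that it actually lies in $\mathcal{P}_4(\mathcal{C}_r)$.

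Having verified these four hypotheses, Proposition \ref{exatt} yields a unique $\mathcal{D}$-pullback measure attractor $\mathcal{A}=\{\mathcal{A}(\tau):\tau\in\mathbb{R}\}\in\mathcal{D}$ for $\Phi$, which is precisely the assertion of Theorem \ref{main_e}. The genuinely delicate point in the whole chain is the asymptotic compactness: since $\mathcal{C}_r=C([-r,0],\ell^2)$ embeds compactly into no natural space on the unbounded lattice $\mathbb{Z}$, compactness must be produced by hand from the uniform tail-end estimate combined with equicontinuity in time, and the delay makes the required uniform bounds somewhat more intricate than in the non-delay case; everything else in the proof is bookkeeping that collects Lemmas \ref{P-dis}--\ref{tight}. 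I would therefore present the proof of Theorem \ref{main_e} as a short verification that the hypotheses of Proposition \ref{exatt} are met, with the substance residing in those lemmas.
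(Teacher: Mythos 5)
Your proposal is correct and follows exactly the route the paper takes: the theorem is deduced from Proposition \ref{exatt} by assembling the continuity (Lemma \ref{P-dis} and Corollary \ref{cor}), the closed $\mathcal{D}$-pullback absorbing set (Lemma \ref{absorbset}), and the $\mathcal{D}$-pullback asymptotic compactness (Lemma \ref{tight}). The paper's own proof is just this one-line citation of those lemmas, so your more detailed verification matches it in substance.
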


\begin{proof}
The existence and uniqueness of the measure attractor $\mathcal{A}$ follow from Proposition \ref{exatt} based on Lemmas \ref{P-dis},
\ref{absorbset} and \ref{tight}.
\end{proof}

\section{Singleton attractors and exponentially mixing of invariant measures}

In this section, under further conditions, we prove that the measure attractor of the system \eqref{ob-3}-\eqref{ob-4} is a singleton. Building on this result, we establish the existence, uniqueness, and exponential mixing of invariant measures for the autonomous case.

In the following,  we assume that $\lambda$ satisfies
\eqref{lamda} and additional  condition:
\begin{equation} \label{lamda-add}
\lambda>2\|\Theta\|_{L^{\infty}(\mathbb{R},\ell^2)}+5\|\psi\|_{L^{\infty}(\mathbb{R},\ell^2)}+9\|\chi\|^2_{L^{\infty}(\mathbb{R},\ell^2)}(1+4c_1^2).
\end{equation}
Using \eqref{lamda} and \eqref{lamda-add} we find that there exists a sufficiently small
$\eta\in (0,1)$ such that both \eqref{lamda-1} and the following inequality
are satisfied:
\begin{eqnarray} \label{epsilca}
2 \lambda -\varepsilon \!\!\!&>&\!\!\! 4\|\Theta\|_{L^{\infty}(\mathbb{R},\ell^2)}+10\|\psi\|_{L^{\infty}(\mathbb{R},\ell^2)}+18\|\chi\|^2_{L^{\infty}(\mathbb{R},\ell^2)}(1+4c_1^2).
\end{eqnarray}

Under the assumption \eqref{lamda-add}, we have the following lemma.
\begin{lm}\label{im1}
Suppose  that {\bf (H1)}-{\bf (H3)} and \eqref{lamda-add} hold, and $\psi(\cdot), \Theta(\cdot) \in L^{\infty}(\mathbb{R}, \ell^2)$.  Then, for every
$\tau \in \mathbb{R}$, $t\ge r$ and
$\mu, \tilde{\mu}\in \mathcal{P}_2(\mathcal{C}_r)$, we have
\begin{eqnarray*}
\mathbb{E}(\|u_\tau(\cdot, \tau-t, \mu)- u_\tau(\cdot, \tau-t, \tilde{\mu})\|_{\mathcal{C}_r}^2) \le  \tilde{c}_3 \mathbb{W}_2(\mu, \tilde{\mu})^2 e^{-\varepsilon(t-r)},
\end{eqnarray*}
where $\varepsilon>0$ is the same number as in \eqref{epsilca} \\
and
$\tilde{c}_3:=4\left[1+2 \varepsilon^{-1} e^{\varepsilon r} \|\psi\|^2_{L^{\infty}(\mathbb{R},\ell^2)}+3 \varepsilon^{-1} e^{\varepsilon r} \|\chi\|^2_{L^{\infty}(\mathbb{R},\ell^2)}(1+4c_1^2)\right].$
\end{lm}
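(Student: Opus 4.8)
The plan is to run the It\^o--energy estimate that underlies Lemma~\ref{P-dis}, but now to exploit the stronger dissipativity condition~\eqref{lamda-add}--\eqref{epsilca}, so that the concluding Gronwall step produces the decay factor $e^{-\varepsilon(t-r)}$ rather than a growth factor. First I would fix an \emph{optimal} coupling of the initial data: pick $u_{\tau-t},v_{\tau-t}\in L^2(\Omega,\mathcal F_{\tau-t},\mathcal C_r)$ with $\mathcal L_{u_{\tau-t}}=\mu$, $\mathcal L_{v_{\tau-t}}=\tilde\mu$ and $\mathbb E\|u_{\tau-t}-v_{\tau-t}\|_{\mathcal C_r}^2=\mathbb W_2(\mu,\tilde\mu)^2$, and set $u(s)=u(s,\tau-t,u_{\tau-t})$, $v(s)=u(s,\tau-t,v_{\tau-t})$, $w=u-v$; since the quantity to be estimated depends only on $\mu$ and $\tilde\mu$, this is no loss of generality (the extra hypothesis $\psi,\Theta\in L^{\infty}(\mathbb R,\ell^2)$ is precisely what makes the constants below uniform in $\tau$ and is needed for~\eqref{lamda-add}). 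Subtracting the two copies of~\eqref{ob-3}, applying the It\^o formula to $e^{\varepsilon s}\|w(s)\|^2$ on $[\tau-t,\varrho]$, discarding the nonnegative term $2\nu e^{\varepsilon s}\|Bw(s)\|^2$, and then taking $\sup_{\tau-t\le\varrho\le\tau}$ and expectation exactly as in the derivation of~\eqref{est1-1+3}, I obtain an inequality of the form
\[
\mathbb E\Big(\sup_{\tau-t\le\varrho\le\tau}e^{\varepsilon\varrho}\|w(\varrho)\|^2\Big)+(2\lambda-\varepsilon)\int_{\tau-t}^{\tau}e^{\varepsilon s}\mathbb E\|w(s)\|^2\,ds\le 2e^{\varepsilon(\tau-t)}\mathbb W_2(\mu,\tilde\mu)^2+\mathrm I+\mathrm{II}+\mathrm{III},
\]
where $\mathrm I,\mathrm{II},\mathrm{III}$ gather the contributions of the drift difference, the diffusion quadratic variation, and the stochastic integral.

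For $\mathrm I$ I would split $f(s,u(s),u(s-r),\mathcal L_{u(s)})-f(s,v(s),v(s-r),\mathcal L_{v(s)})$ into a difference in the first argument plus a difference in the last two. Pairing the first piece against $w(s)$ and invoking the one-sided bound~\eqref{fi-3} produces $\sum_i\Theta_i(s)|w_i(s)|^2\le\|\Theta(s)\|\,\|w(s)\|^2$ with the favourable sign, while the second piece is Lipschitz by~\eqref{fi-Lip}; after Young's inequality and~\eqref{dis-property} (so that $\rho(\mathcal L_{u(s)},\mathcal L_{v(s)})^2\le\mathbb E\|w(s)\|^2$) it yields multiples of $\int_{\tau-t}^{\tau}e^{\varepsilon s}\mathbb E\|w(s)\|^2\,ds$ and of $\int_{\tau-t}^{\tau}e^{\varepsilon s}\mathbb E\|w(s-r)\|^2\,ds$, the latter with a coefficient proportional to $\|\psi\|_{L^{\infty}(\mathbb R,\ell^2)}^2$. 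Term $\mathrm{II}$ is controlled by~\eqref{segama-Lip} together with~\eqref{dis-property}, again producing multiples of these two integrals with coefficients proportional to $\|\chi\|_{L^{\infty}(\mathbb R,\ell^2)}^2$, and $\mathrm{III}$ is treated by the Burkholder--Davis--Gundy inequality precisely as in~\eqref{est1-1+7}, namely
\[
\mathrm{III}\le\frac{1}{2}\mathbb E\Big(\sup_{\tau-t\le\varrho\le\tau}e^{\varepsilon\varrho}\|w(\varrho)\|^2\Big)+8c_1^2\int_{\tau-t}^{\tau}e^{\varepsilon s}\mathbb E\big\|\tilde\sigma(s,u(s),u(s-r),\mathcal L_{u(s)})-\tilde\sigma(s,v(s),v(s-r),\mathcal L_{v(s)})\big\|_{L_2(\ell^2,\ell^2)}^2\,ds,
\]
and this last integral is estimated once more by~\eqref{segama-Lip}.

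Next I would eliminate the delay integrals by the substitution $s\mapsto s-r$, splitting the range at $\tau-t+r$ (which is $\le\tau$ since $t\ge r$): the part over $[\tau-t,\tau-t+r]$ falls on the initial interval and is bounded by $\varepsilon^{-1}e^{\varepsilon r}e^{\varepsilon(\tau-t)}\mathbb W_2(\mu,\tilde\mu)^2$, while the remainder is at most $e^{\varepsilon r}\int_{\tau-t}^{\tau}e^{\varepsilon s}\mathbb E\|w(s)\|^2\,ds$. Collecting every occurrence of $\int_{\tau-t}^{\tau}e^{\varepsilon s}\mathbb E\|w(s)\|^2\,ds$ on the right, its total coefficient is---for $\varepsilon$ chosen so small that $e^{\varepsilon r}$ is as close to $1$ as required---strictly less than $2\lambda-\varepsilon$ by~\eqref{epsilca}, so it is absorbed into the left-hand side; the term $\frac{1}{2}\mathbb E(\sup\cdots)$ coming from $\mathrm{III}$ is absorbed in the same way. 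What survives is
\[
\mathbb E\Big(\sup_{\tau-t\le\varrho\le\tau}e^{\varepsilon\varrho}\|w(\varrho)\|^2\Big)\le\tilde c_3\,e^{\varepsilon(\tau-t)}\,\mathbb W_2(\mu,\tilde\mu)^2
\]
with $\tilde c_3$ as in the statement. Finally, restricting the supremum to $[\tau-r,\tau]$, where $e^{\varepsilon\varrho}\ge e^{\varepsilon(\tau-r)}$, using $\mathbb E\|u_\tau(\cdot,\tau-t,\mu)-u_\tau(\cdot,\tau-t,\tilde\mu)\|_{\mathcal C_r}^2=\mathbb E\big(\sup_{\tau-r\le\varrho\le\tau}\|w(\varrho)\|^2\big)$, and dividing by $e^{\varepsilon(\tau-r)}$, I arrive at the claimed bound with $e^{\varepsilon(\tau-t)}/e^{\varepsilon(\tau-r)}=e^{-\varepsilon(t-r)}$.

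The delicate point is the constant bookkeeping in the third step: one has to verify that, once the delay shift has contributed the factor $e^{\varepsilon r}$ to the state term and the BDG step the factor $c_1^2$, the accumulated coefficient of $\int_{\tau-t}^{\tau}e^{\varepsilon s}\mathbb E\|w(s)\|^2\,ds$ is still covered by the strict slack in~\eqref{epsilca}, and that the diffusion contributions of $\mathrm{II}$ and $\mathrm{III}$ combine with exactly the factor $1+4c_1^2$ appearing in $\tilde c_3$. Everything else---the admissibility of the It\^o and BDG steps and the integrability of the stochastic integrals---is routine and is guaranteed by Proposition~\ref{Pro1} together with the a priori $L^2$-estimates of Section~4.
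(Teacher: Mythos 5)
Your proposal is correct and follows essentially the same route as the paper's own proof: an optimal coupling of the initial data, the It\^o formula applied to $e^{\varepsilon s}\|w(s)\|^2$, the drift split into a first-argument difference handled by \eqref{fi-3} and a remainder handled by \eqref{fi-Lip} together with \eqref{dis-property}, the BDG step contributing the $1+4c_1^2$ factor, the shift $s\mapsto s-r$ to convert the delay integrals into a state integral plus an initial-segment contribution, absorption via \eqref{epsilca}, and finally restriction of the supremum to $[\tau-r,\tau]$ to produce $e^{-\varepsilon(t-r)}$. The constant bookkeeping you flag as the delicate point is exactly what the paper carries out in \eqref{W2-lemma-2+}--\eqref{add-1}.
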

\begin{proof}
Let $u(\varrho):=u(\varrho, \tau-t, \mu)$ and $v(\varrho):=u(\varrho, \tau-t, \tilde{\mu})$ be two solutions to system \eqref{ob-3}-\eqref{ob-4}, where $\mathcal{L}_{u_{\tau-t}}=\mu$ and $\mathcal{L}_{v_{\tau-t}}=\tilde{\mu}$, respectively, and
$\mathbb{W}_2(\mu, \tilde{\mu})^2= \mathbb{E}\|u_{\tau-t}-v_{\tau-t}\|_{\mathcal{C}_r}^2.$
Using the It\^o formula, we obtain that for all $\varrho \ge \tau-t$,
\begin{equation} \label{W2-lemma-2+}\small
\begin{split}
 &\; \mathbb{E} (\sup_{\tau-t \le \varrho \le \tau} e^{\varepsilon \varrho}
\|u(\varrho)-v(\varrho)\|^2)+(2\lambda-\varepsilon) \int_{\tau-t}^\tau e^{\varepsilon s}\|u(s)-v(s)\|^2 ds
\\
\le & \; 2 e^{\varepsilon(\tau-t)}\mathbb{E}\|u_{\tau-t}(0)-v_{\tau-t}(0)\|^2
\\
&\; +4\mathbb{E} \sup_{\tau-t \le \varrho \le \tau}\int_{\tau-t}^{\rho}e^{\varepsilon s}(u(s)-v(s), [f(s, u(s), u(s-r), \mathcal{L}_{u(s)})-f(s, v(s), v(s-r), \mathcal{L}_{v(s)})]) ds
\\
&\; + 2\mathbb{E}  \sup_{\tau-t \le \varrho \le \tau} \int_{\tau-t}^{\varrho} e^{\varepsilon s}\|\tilde{\sigma}(s, u(s), u(s-r), \mathcal{L}_{u(s)})
-\tilde{\sigma}(s, v(s), v(s-r), \mathcal{L}_{v(s)})\|_{L_2(\ell^2, \ell^2)}^2 ds
\\
&\; +4 \mathbb{E}  \sup_{\tau-t \le \varrho \le \tau} \bigl|\int_{\tau-t}^{\varrho} e^{\varepsilon s} (u(s)-v(s), [\tilde{\sigma}(s, u(s), u(s-r), \mathcal{L}_{u(s)})-\tilde{\sigma}(s, v(s), v(s-r), \mathcal{L}_{v(s)})]dW(s)) \bigr|.
\end{split}
\end{equation}

We estimate each term in the right-hand side of \eqref{W2-lemma-2+} separately. For the second term, by using \eqref{fi-Lip}
and \eqref{dis-property}, we obtain that
\begin{equation}\label{W2-lemma-3}
\begin{split}
&\, 4 \mathbb{E} \sup_{\tau-t \le \varrho \le \tau} \int_{\tau-t}^{\varrho} e^{\varepsilon s}(u(s)-v(s), [f(s, u(s), u(s-r), \mathcal{L}_{u(s)})
-f(s, v(s), v(s-r), \mathcal{L}_{v(s)})]) ds
\\
\le &\, 4 \mathbb{E} \sup_{\tau-t \le \varrho \le \tau} \int_{\tau-t}^{\varrho} e^{\varepsilon s}(u(s)-v(s), [f(s, u(s), u(s-r), \mathcal{L}_{u(s)})
-f(s, v(s), u(s-r), \mathcal{L}_{u(s)})]) ds
\\
&\, +4 \mathbb{E} \sup_{\tau-t \le \varrho \le \tau} \int_{\tau-t}^{\varrho} e^{\varepsilon s}(u(s)-v(s), [f(s, v(s), u(s-r), \mathcal{L}_{u(s)})
-f(s, v(s), v(s-r), \mathcal{L}_{v(s)})]) ds
\\
\le &\,  4 \mathbb{E} \int_{\tau-t}^{\tau}  e^{\varepsilon s}\|\Theta(s)\|\|u(s)-v(s)\|^2  ds
\\
&\, + 4 \mathbb{E} \int_{\tau-t}^{\tau} \sum_{i \in \mathbb{Z}} \psi_i(s)  e^{\varepsilon s} (|u_i(s-r)-v_i(s-r)|+(\mathbb{E} |u_i(s)-v_i(s)|^2)^{\frac{1}{2}}) |u_i(s)-v_i(s)|ds
\\
\le &\,  4 \mathbb{E} \int_{\tau-t}^\tau  e^{\varepsilon s}\|\Theta(s)\| \|u(s)-v(s)\|^2 ds
+2\mathbb{E} \int_{\tau-t}^\tau  e^{\varepsilon s}\|\psi(s)\| \|u(s)-v(s)\|^2 ds
\\
&\,+4 \mathbb{E} \int_{\tau-t}^\tau  e^{\varepsilon s} \|\psi(s)\|(\|u(s-r)-v(s-r)\|^2+ \mathbb{E}\|u(s)-v(s)\|^2)ds
\\
\le &\, (4\|\Theta\|_{L^{\infty}(\mathbb{R},\ell^2)}+10\|\psi\|_{L^{\infty}(\mathbb{R},\ell^2)})\int_{\tau-t}^\tau \mathbb{E}
\left(e^{\varepsilon s}\|u(s)-v(s)\|^2 \right) ds
\\
&\,+ 4 \varepsilon^{-1} e^{\varepsilon(\tau-t+r)} \|\psi\|^2_{L^{\infty}(\mathbb{R},\ell^2)}\mathbb{ E }\|u_{\tau-t}-v_{\tau-t}\|_{\mathcal{C}_r}^2.
\end{split}
\end{equation}
For the third term, by using \eqref{dis-property} and \eqref{segama-Lip}, we obtain that
\begin{equation} \label{W2-lemma-4+}
\begin{split}
&\, 2\mathbb{E} \sup_{\tau-t \le \varrho \le \tau} \int_{\tau-t}^{\varrho} e^{\varepsilon s}
\|\tilde{\sigma}(s, u(s), u(s-r), \mathcal{L}_{u(s)})-\tilde{\sigma}(s, v(s), v(s-r), \mathcal{L}_{v(s)})\|_{L^2(\ell^2, \ell^2)}^2 ds
\\
\le &\, 6\|\chi\|^2_{L^{\infty}(\mathbb{R},\ell^2)} \int_{\tau-t}^\tau
 e^{\varepsilon s}\mathbb{E}[\|u(s)-v(s)\|^2+\|u(s-r)-v(s-r)\|^2+ \mathbb{E}\|u(s)-v(s)\|^2] ds
\\
\le &\, 18\|\chi\|^2_{L^{\infty}(\mathbb{R},\ell^2)}
\int_{\tau-t}^\tau \mathbb{E} ( e^{\varepsilon s}\|u(s)-v(s)\|^2) ds+6 \varepsilon^{-1} e^{\varepsilon(\tau-t+r)} \|\chi\|^2_{L^{\infty}(\mathbb{R},\ell^2)}
 \mathbb{E}\|u_{\tau-t}-v_{\tau-t}\|_{\mathcal{C}_r}^2.
\end{split}
\end{equation}
For the last term, by using the BDG inequality, \eqref{dis-property} and \eqref{segama-Lip}, we have that
\begin{equation} \label{W2-lemma-5+}\small
\begin{split}
&\,  4 \mathbb{E}\sup_{\tau-t \le \varrho \le \tau} \bigl |  \int_{\tau-t}^{\varrho}  e^{\varepsilon s}(u(s)-v(s), [\tilde{\sigma}(s, u(s), u(s-r), \mathcal{L}_{u(s)})
-\tilde{\sigma}(s, v(s), v(s-r), \mathcal{L}_{v(s)})]dW(s)) \bigr|
\\
\le &\, 4c_1\mathbb{E}\{\sup_{\tau-t \le \varrho \le \tau}  e^{\varepsilon \varrho}\|u(\varrho)-v(\varrho)\|
\\
&\, \qquad\qquad\qquad \cdot
(\int_{\tau-t}^\tau  e^{\varepsilon s} \|\tilde{\sigma}(s, u(s), u(s-r), \mathcal{L}_{u(s)})
-\tilde{\sigma}(s, v(s), v(s-r), \mathcal{L}_{v(s)})\|_{L_2(\ell^2, \ell^2)}^2 ds)^{\frac{1}{2}}\}
\\
\le &\, \frac{1}{2} \mathbb{E}(\sup_{\tau-t \le \varrho\le \tau} e^{\varepsilon \varrho} \|u(\varrho)-v(\varrho)\|^2)
\\
&\,+ 8c_1^2 \int_{\tau-t}^\tau \mathbb{E}  e^{\varepsilon s}\|\tilde{\sigma}(s, u(s), u(s-r), \mathcal{L}_{u(s)})
-\tilde{\sigma}(s, v(s), v(s-r), \mathcal{L}_{v(s)})\|_{L_2(\ell^2, \ell^2)}^2 ds
\\
\le &\, \frac{1}{2} \mathbb{E}(\sup_{\tau-t \le \varrho\le \tau} e^{\varepsilon \varrho}\|u(\varrho)-v(\varrho)\|^2)
+72c_1^2 \|\chi\|^2_{L^{\infty}(\mathbb{R},\ell^2)}\int_{\tau-t}^\tau \mathbb{E}  (e^{\varepsilon s}\|u(s)-v(s)\|^2) ds
\\
&\,+24c_1^2  \varepsilon^{-1} e^{\varepsilon(t-\tau+r)}\|\chi\|^2_{L^{\infty}(\mathbb{R},\ell^2)} \|u_{\tau-t}-v_{\tau-t}\|_{\mathcal{C}_r}^2.
\end{split}
\end{equation}
By combining \eqref{W2-lemma-2+}-\eqref{W2-lemma-5+}, we obtain that
\begin{equation}\label{W2-lemma-6+}
\begin{split}
 &\,  \mathbb{E} (\sup_{\tau-t \le \varrho \le \tau} e^{\varepsilon \varrho} \|u(\varrho)-v(\varrho)\|^2)+(2\lambda-\varepsilon)
 \int_{\tau-t}^\tau \mathbb{E} (e^{\varepsilon s}\|u(s)-v(s)\|^2) ds
 \\
  \le &\, \frac{1}{2} \mathbb{E}(\sup_{\tau-t \le \varrho\le \tau}  e^{\varepsilon \varrho}\|u(\varrho)-v(\varrho)\|^2)
\\
&\, + \left[ 4\|\Theta\|_{L^{\infty}(\mathbb{R},\ell^2)}+10\|\psi\|_{L^{\infty}(\mathbb{R},\ell^2)}+18\|\chi\|^2_{L^{\infty}(\mathbb{R},\ell^2)}(1+4c_1^2)\right]
\int_{\tau-t}^\tau \mathbb{E} (e^{\varepsilon s}\|u(s)-v(s)\|^2) ds
\\
&\,+\left[ 2+4 \varepsilon^{-1} e^{\varepsilon r} \|\psi\|^2_{L^{\infty}(\mathbb{R},\ell^2)}+6 \varepsilon^{-1} e^{\varepsilon r} \|\chi\|^2_{L^{\infty}(\mathbb{R},\ell^2)}(1+4c_1^2)\right] e^{\varepsilon(\tau-t)} \mathbb{E}\|u_{\tau-t}-v_{\tau-t}\|_{\mathcal{C}_r}^2.
\end{split}
\end{equation}
By virtue of \eqref{lamda-add}, we have that
\begin{equation} \label{add-1}
\begin{split}
 &\, \mathbb{E} (\sup_{\tau-t \le \varrho \le \tau} e^{\varepsilon \varrho}\|u(\varrho)-v(\varrho)\|^2)
\\
\le&\, 4\left[ 1+2 \varepsilon^{-1} e^{\varepsilon r} \|\psi\|^2_{L^{\infty}(\mathbb{R},\ell^2)}+3 \varepsilon^{-1} e^{\varepsilon r} \|\chi\|^2_{L^{\infty}(\mathbb{R},\ell^2)}(1+4c_1^2) \right] e^{\varepsilon(\tau-t)} \mathbb{E}\|u_{\tau-t}-v_{\tau-t}\|_{\mathcal{C}_r}^2.
\end{split}
\end{equation}
Finally, noticing that for all $t \ge r$
\begin{eqnarray*}
\mathbb{E}(\sup_{\tau-r \le \varrho \le \tau}\|u(\varrho)-v(\varrho)\|^2)
\!\!\!&\le&\!\!\!
e^{-\varepsilon(\tau-r)}  \mathbb{E}(\sup_{\tau-t \le \varrho \le \tau}e^{\varepsilon \varrho}\|u(\varrho)-v(\varrho)\|^2),
\end{eqnarray*}
along with \eqref{add-1}, we complete this proof.
\end{proof}

As an immediate consequence of Lemma \ref{im1}, we obtain that
\begin{cor}\label{cor1}
Suppose  that {\bf (H1)}-{\bf (H3)} and \eqref{lamda-add} hold, and $\psi(\cdot), \Theta(\cdot) \in L^{\infty}(\mathbb{R}, \ell^2)$.  Then, for every
$\tau \in \mathbb{R}$, $t\ge r$ and
$\mu, \tilde{\mu}\in \mathcal{P}_2(\mathcal{C}_r)$, we have
\begin{eqnarray*}
\mathbb{W}_2(P_{\tau-t, \tau}\mu, P_{\tau-t, \tau}\tilde{\mu}) \le  \tilde{c}_3^{\frac{1}{2}} e^{-\frac{1}{2}\varepsilon(t-r)} \mathbb{W}_2(\mu, \tilde{\mu}),
\end{eqnarray*}
where $\varepsilon>0$ is the same number as in \eqref{epsilca}
\end{cor}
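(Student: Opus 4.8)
The plan is to obtain Corollary \ref{cor1} as a direct consequence of Lemma \ref{im1}, using the coupling characterization of the Wasserstein distance together with the weak uniqueness of solutions from Proposition \ref{weak-uniquess}. First I would fix $\tau\in\mathbb{R}$, $t\ge r$ and $\mu,\tilde{\mu}\in\mathcal{P}_2(\mathcal{C}_r)$. Since $(\mathcal{P}_2(\mathcal{C}_r),\mathbb{W}_2)$ is a Polish space, an optimal coupling of $\mu$ and $\tilde{\mu}$ exists; I would realize it by choosing, on a common stochastic basis, two $\mathcal{F}_{\tau-t}$-measurable random variables $u_{\tau-t}$ and $v_{\tau-t}$ with $\mathcal{L}_{u_{\tau-t}}=\mu$, $\mathcal{L}_{v_{\tau-t}}=\tilde{\mu}$ and $\mathbb{E}\|u_{\tau-t}-v_{\tau-t}\|_{\mathcal{C}_r}^2=\mathbb{W}_2(\mu,\tilde{\mu})^2$. (If one prefers to avoid attainment of the infimum, a minimizing sequence of couplings works equally well and only forces one extra limit at the end; this changes nothing.)

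Next I would let $u(\cdot)=u(\cdot,\tau-t,u_{\tau-t})$ and $v(\cdot)=u(\cdot,\tau-t,v_{\tau-t})$ be the solutions of \eqref{ob-3}-\eqref{ob-4} with these two initial data, both driven by the \emph{same} Wiener process, so that the hypotheses of Lemma \ref{im1} are met verbatim. By the definition \eqref{df-P} of the operators $P^*$ and the weak uniqueness of Proposition \ref{weak-uniquess}, the laws of $u_\tau$ and $v_\tau$ are precisely $P^*_{\tau-t,\tau}\mu$ and $P^*_{\tau-t,\tau}\tilde{\mu}$, independently of the chosen realization, and they belong to $\mathcal{P}_2(\mathcal{C}_r)$ by Proposition \ref{Pro1}, so the left-hand side of the claimed inequality is well defined. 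The joint law of $(u_\tau,v_\tau)$ is then a coupling of $P^*_{\tau-t,\tau}\mu$ and $P^*_{\tau-t,\tau}\tilde{\mu}$, whence, by the definition of the Wasserstein distance,
\[
\mathbb{W}_2\bigl(P^*_{\tau-t,\tau}\mu,\,P^*_{\tau-t,\tau}\tilde{\mu}\bigr)^2
\le \mathbb{E}\bigl(\|u_\tau-v_\tau\|_{\mathcal{C}_r}^2\bigr).
\]

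Finally I would apply Lemma \ref{im1} to bound the right-hand side above by $\tilde{c}_3\,\mathbb{W}_2(\mu,\tilde{\mu})^2\,e^{-\varepsilon(t-r)}$ and take square roots, which yields exactly the assertion with constant $\tilde{c}_3^{1/2}$ and exponential rate $\frac{1}{2}\varepsilon$. I do not expect any genuine obstacle here, since the whole analytic content has already been established in Lemma \ref{im1}; the only points deserving a line of care are that the two solutions must be constructed on one and the same stochastic basis with a common Brownian motion (so that Lemma \ref{im1} applies as stated), and that $P^*_{\tau-t,\tau}$ indeed maps $\mathcal{P}_2(\mathcal{C}_r)$ into itself.
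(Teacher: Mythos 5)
Your proposal is correct and matches the paper's (implicit) argument: the paper states Corollary \ref{cor1} as an immediate consequence of Lemma \ref{im1} without writing out the proof, and the coupling step you supply — realizing an optimal coupling of $\mu$ and $\tilde{\mu}$ at time $\tau-t$, driving both solutions by the same Wiener process, and noting that the joint law of $(u_\tau, v_\tau)$ is a coupling of $P^*_{\tau-t,\tau}\mu$ and $P^*_{\tau-t,\tau}\tilde{\mu}$ — is exactly what the paper's proof of Lemma \ref{im1} already presupposes when it sets $\mathbb{W}_2(\mu,\tilde{\mu})^2=\mathbb{E}\|u_{\tau-t}-v_{\tau-t}\|_{\mathcal{C}_r}^2$. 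No gaps.
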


With the preparations above, we now apply Lemma \ref{im1} to prove the main results of this section.
\begin{thm}\label{main-s}
Assume that  {\bf (H1)}-{\bf (H3)}, \eqref{lamda}, \eqref{g^2}-\eqref{g^4} and \eqref{lamda-add} hold, and $\psi(\cdot), \Theta(\cdot) \in L^{\infty}(\mathbb{R}, \ell^2)$. Then, the following results hold:
\begin{enumerate}
\item [(i)]
The  $\mathcal{D}$-pullback
    measure attractor
$\mathcal{A}$ of
$\Phi$  is a singleton;  that is,
$\mathcal{A}= \{   \mu (\tau)
: \tau \in \mathbb{R}   \}$.

\item[(ii)]
If $f, g$ and $\sigma$ are all time independent, then  the stochastic lattice equation with delays \eqref{ob-3} has a unique invariant measure
$\mu$ in $\mathcal{P}_4(\mathcal{C}_r)$. Moreover, $\mu$ is exponentially mixing in the sense that for any
$\upsilon \in \mathcal{P}_2 (\mathcal{C}_r)$,
\begin{align}\label{main_s 2}
\mathbb{W}_2
\left (
P_{0, t}^* \upsilon, \ \mu
\right )
\le \tilde{c}_3^{\frac{1}{2}} e^{-{\frac 12}\varepsilon (t-r)}\mathbb{W}_2\left (\upsilon, \ \mu \right) , \quad \forall t\ge r,
\end{align}
where $\varepsilon>0$ is the same number as in \eqref{epsilca}.
 \end{enumerate}
 \end{thm}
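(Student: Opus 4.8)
The plan is to derive both assertions from the strict contraction estimate of Corollary~\ref{cor1}, which is available precisely because the extra dissipativity condition \eqref{lamda-add} has been imposed. Recall from Theorem~\ref{main_e} that $\Phi$ already possesses a unique $\mathcal{D}$-pullback measure attractor $\mathcal{A}=\{\mathcal{A}(\tau):\tau\in\mathbb{R}\}\in\mathcal{D}$; since $\mathcal{D}\subseteq\mathcal{D}_0$, this $\mathcal{A}$ also satisfies the tempered bound $e^{\varepsilon\tau}\|\mathcal{A}(\tau)\|_{\mathcal{P}_2(\mathcal{C}_r)}^{2}\to 0$ as $\tau\to-\infty$, and $\mathcal{A}(\tau)$ is bounded in $\mathcal{P}_4(\mathcal{C}_r)\subseteq\mathcal{P}_2(\mathcal{C}_r)$ for each $\tau$.

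For part (i), fix $\tau\in\mathbb{R}$ and take arbitrary $\mu_1,\mu_2\in\mathcal{A}(\tau)$. Invariance of the attractor supplies, for each $t\ge r$, elements $\nu_1^{t},\nu_2^{t}\in\mathcal{A}(\tau-t)$ with $\Phi(t,\tau-t)\nu_i^{t}=P^*_{\tau-t,\tau}\nu_i^{t}=\mu_i$. Applying Corollary~\ref{cor1} with starting time $\tau-t$ and current time $\tau$, and estimating $\mathbb{W}_2(\nu_1^{t},\nu_2^{t})\le 2\|\mathcal{A}(\tau-t)\|_{\mathcal{P}_2(\mathcal{C}_r)}$ by the triangle inequality through the Dirac mass at the zero function of $\mathcal{C}_r$, one obtains
\[
\mathbb{W}_2(\mu_1,\mu_2)\ \le\ 2\tilde c_3^{1/2}\,e^{\frac12\varepsilon r}\,e^{-\frac12\varepsilon\tau}\Bigl(e^{\varepsilon(\tau-t)}\|\mathcal{A}(\tau-t)\|_{\mathcal{P}_2(\mathcal{C}_r)}^{2}\Bigr)^{1/2}.
\]
The right-hand side tends to $0$ as $t\to\infty$ by the tempered property of $\mathcal{A}$, while the left-hand side is independent of $t$; hence $\mathbb{W}_2(\mu_1,\mu_2)=0$, so $\mathcal{A}(\tau)$ is a singleton, say $\mathcal{A}(\tau)=\{\mu(\tau)\}$.

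For part (ii), assume $f,g,\sigma$ are time-independent, so the solution operators are autonomous, i.e. $\Phi(t,\tau)=P^*_{0,t}$ for every $\tau$. Then the constant $M_4(\tau)$ in \eqref{absorbset-1} does not depend on $\tau$, and the $\mathcal{D}$-pullback absorbing set $K(\tau)\equiv K$ as well as the universe $\mathcal{D}$ are invariant under time shifts; consequently the pullback attractor is $\tau$-independent, $\mathcal{A}(\tau)\equiv\mathcal{A}$, and by (i) it is a single point $\mathcal{A}=\{\mu\}$ with $\mu\in K\subseteq\mathcal{P}_4(\mathcal{C}_r)$. The invariance relation $\Phi(t,\tau)\mathcal{A}(\tau)=\mathcal{A}(\tau+t)$ then reads $P^*_{0,t}\mu=\mu$ for all $t\ge0$, so $\mu$ is an invariant measure in $\mathcal{P}_4(\mathcal{C}_r)$. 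Corollary~\ref{cor1}, applied with starting time $0$ and current time $t$, gives $\mathbb{W}_2(P^*_{0,t}\upsilon_1,P^*_{0,t}\upsilon_2)\le\tilde c_3^{1/2}e^{-\frac12\varepsilon(t-r)}\mathbb{W}_2(\upsilon_1,\upsilon_2)$ for all $\upsilon_1,\upsilon_2\in\mathcal{P}_2(\mathcal{C}_r)$ and $t\ge r$. Choosing $\upsilon_1=\upsilon$, $\upsilon_2=\mu$ and using $P^*_{0,t}\mu=\mu$ yields \eqref{main_s 2}. If $\tilde\mu\in\mathcal{P}_4(\mathcal{C}_r)$ is another invariant measure, the same inequality with $\upsilon_1=\tilde\mu$, $\upsilon_2=\mu$ gives $\mathbb{W}_2(\tilde\mu,\mu)\le\tilde c_3^{1/2}e^{-\frac12\varepsilon(t-r)}\mathbb{W}_2(\tilde\mu,\mu)$, and taking $t$ large enough that the prefactor is $<1$ forces $\tilde\mu=\mu$, which is uniqueness.

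The substantive analytic work is already done in Lemma~\ref{im1} and Corollary~\ref{cor1}; given those, the remaining steps are short. The one point requiring genuine care is the pullback step in (i): the exponential decay $e^{-\varepsilon t/2}$ coming from the contraction must be balanced against the only available control on $\|\mathcal{A}(\tau-t)\|_{\mathcal{P}_2(\mathcal{C}_r)}$, namely the tempered growth condition defining $\mathcal{D}_0$, and without invoking $\mathcal{A}\in\mathcal{D}_0$ there is no reason for the pullback to collapse to a point. In (ii) the only subtlety is checking that autonomy of the coefficients makes $M_4(\tau)$, $K(\tau)$ and the universe $\mathcal{D}$ genuinely independent of $\tau$, so that the non-autonomous attractor reduces to a fixed point of the Markov semigroup $\{P^*_{0,t}\}_{t\ge0}$.
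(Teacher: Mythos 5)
Your proposal is correct and follows essentially the same route as the paper: part (i) is the paper's argument verbatim in spirit (pull back along the invariance, apply the contraction of Lemma \ref{im1}/Corollary \ref{cor1}, bound $\mathbb{W}_2$ of the preimages by twice the attractor's norm, and let the tempered condition kill the product), and part (ii) likewise reduces to Corollary \ref{cor1} after observing that autonomy makes the attractor a fixed singleton. Your version is in fact slightly more complete than the paper's, since you spell out why $\mathcal{A}(\tau)$ is $\tau$-independent in the autonomous case and derive uniqueness of the invariant measure explicitly from the contraction, steps the paper leaves implicit.
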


\begin{proof} (i) According to Theorem \ref{main_e}, let $\mathcal{A}= \{\mathcal{A}(\tau): \tau \in \mathbb{R}\}$ denote
the unique $\mathcal{D}$-pullback measure attractor of $\Phi$.  We will prove that $\mathcal{A}(\tau)$ is a set with a single point  for each
$\tau \in \mathbb{R}$. To this end, for any $\mu,\tilde{ \mu} \in \mathcal{A}(\tau)$, we aim  to show $\mu =\tilde{\mu}$.

Indeed, let $\{t_n\}_{n=1}^\infty$ be a sequence of positive numbers such that $t_n \to \infty$.
By the invariance of $\mathcal{A}$, for each $n\in \mathbb{N}^+$, there exist $\mu_{\tau -t_n}, \tilde{\mu}_{\tau-t_n} \in \mathcal{A} (\tau -t_n)$
such that
\begin{eqnarray*}
\mu \!\!\!&=&\!\!\! \Phi(t_n, \tau -t_n) \mu_{\tau -t_n} = \mathcal{L}_{u_\tau(\cdot,\, \tau, \,\mu_{\tau -t_n})}
\end{eqnarray*}
and
\begin{eqnarray*}
\tilde{\mu}\!\!\!&=&\!\!\!\Phi(t_n, \tau -t_n) \tilde{\mu}_{\tau -t_n}= \mathcal{L}_{u_\tau(\cdot,\, \tau,\,\tilde{ \mu}_{\tau -t_n})}.
\end{eqnarray*}
By using Lemma \ref{im1}, we get
\begin{equation}\label{im2 p1}
\begin{split}
   d_{\mathcal{P} (\mathcal{C}_r)}(\mu, \tilde{\mu}) = & \, \sup_{\psi\in L_b (\mathcal{C}_r), \|\psi\|_{L_b(\mathcal{C}_r)} \le 1}
   \left |\int_{\mathcal{C}_r} \psi d\mathcal{L}_{u_\tau(\cdot, \, \tau-t_n, \, \mu_{\tau-t_n})}-\int_{\mathcal{C}_r} \psi
   \mathcal{L}_{u_\tau(\cdot,\, \tau-t_n, \,\tilde{\mu}_{\tau-t_n})}\right|
   \\
    = &\, \sup_{\psi\in L_b(\mathcal{C}_r), \|\psi\|_{L_b}(\mathcal{C}_r) \le 1} \left|\mathbb{E} \left(\psi(u_\tau( \cdot, \tau -t_n, \mu_{\tau-t_n}))\right)
    -\mathbb{E}\left(\psi(u_\tau(\cdot, \tau-t_n, \tilde{\mu}_{\tau-t_n}))\right)\right|
   \\
   \le &\,
   \mathbb{E}\left(\|u_\tau(\cdot, \tau -t_n, \mu_{\tau-t_n})-u_\tau(\cdot, \tau -t_n, \tilde{\mu}_{\tau-t_n})\|_{\mathcal{C}_r}\right)
    \\
    \le &\,
  \tilde{c}_3^{\frac{1}{2}} e^{-{\frac 12}\varepsilon (t_n-r)}\mathbb{W}_2(\mu_{\tau-t_n}, \tilde{\mu}_{\tau-t_n})
    \\
    \le &\,
  2 \tilde{c}_3^{\frac{1}{2}} e^{-{\frac 12}\varepsilon (t_n-r)} \|\mathcal{A}(\tau-t_n)\|_{\mathcal{P}_2(\mathcal{C}_r)}
  \\
  \le &\, 2 \tilde{c}_3^{\frac{1}{2}} e^{-\frac{1}{2}\varepsilon(\tau-r)}
  (e^{2 \varepsilon (\tau-t_n)}\|\mathcal{A}(\tau-t_n)\|_{\mathcal{P}_4(\mathcal{C}_r)}^4)^{\frac{1}{4}}.
 \end{split}
\end{equation}
    Since $\mathcal{A} \in \mathcal{D}$, we
    have
   $$
   \lim_{t\to \infty}
    e^{ 2 \varepsilon (\tau - t )} \|\mathcal{A} (\tau -t )\|^4
  _{\mathcal{P}_4(\mathcal{C}_r)} =0,
  $$
  which implies that the right-side of \eqref{im2 p1}
  converges to zero as $t_n\to \infty$.
  Hence,
  $\mu=\tilde{\mu}$.

  (ii) If   $f, g$ and $\sigma$ are all time independent,
then $\mu(\tau)=\mu$ for any $\tau \in \mathbb{R}$. Hence, $\mu$ is
an invariant measure.
Applying Corollary \ref{cor1}, we obtain \eqref{main_s 2},
  which  completes the proof.
  \end{proof}

\section{Upper semicontinuity of pullback measure attractors}

In this section, we study the limiting behavior of pullback measure attractors for the distribution-dependent non-autonomous stochastic lattice system with delays.
\vskip0.1in
To this end, we consider two systems \eqref{ob-1'} and \eqref{ob-1''}
on the integer set $\mathbb{Z}$.
Unless otherwise specified, we assume that all the functions $f^\epsilon=(f^\epsilon_i)_{i \in \mathbb{Z}}$ and $\sigma^\epsilon=(\sigma_i^\epsilon)_{i \in \mathbb{Z}}$ in \eqref{ob-1'} satisfy the conditions {\bf  (H1)}-{\bf (H3)} uniformly with respect to $\epsilon \in (0,1)$. The functions $f$ and $\sigma$ in \eqref{ob-1''}
satisfy the conditions corresponding to {\bf  (H1)}-{\bf (H3)} without the terms involving distributions $\mu, \mu_1$ or $\mu_2$. We also assume that
\eqref{lamda} and \eqref{g^2}-\eqref{g^4}  hold.

\vskip0.1in
Furthermore, we assume that for all $i \in \mathbb{Z}$, $\epsilon \in (0,1)$,  $t, u, v \in \mathbb{R}$ and $\mu \in \mathcal {P}_2(\mathbb{R})$,
\begin{align}
\left |f_i^\epsilon(t, u, v, \mu)-f_i(t, u, v)  \right | \le & \, \epsilon \tilde{\varphi}_i (t)(|u|+|v|+\mathbb{W}_2^{\mathbb{R}}(\mu, \delta_0)) \label{limc1}
\end{align}
and
\begin{align}
\left|\sigma_i^\epsilon(t, u, v, \mu)-\sigma_i(t,u, v)\right| \le & \, \epsilon \tilde{\psi}_i(t)(|u|+|v|
+\mathbb{W}_2^{\mathbb{R}}(\mu, \delta_0)), \label{limc3}
\end{align}
where $\tilde{\varphi}=(\tilde{\varphi}_i)_{i \in \mathbb{Z}}$, $\tilde{\phi}=(\tilde{\phi}_i)_{i \in \mathbb{Z}}$ and
$\tilde{\psi}=(\tilde{\psi}_i)_{i \in \mathbb{Z}} \in L^{\infty}(\mathbb{R},\ell^2)$.

\vskip0.1in
In exactly the same manner as before, we rewrite \eqref{ob-1'} and \eqref{ob-1''} in the following abstract form:
\begin{equation}\label{ob-3'}
\begin{split}
& du^\epsilon(t)+\nu A u^\epsilon(t) dt+\lambda u^\epsilon(t) dt
\\
& =(f^\epsilon(t, u^\epsilon(t),u^\epsilon(t-r),\mathcal{L}_{u^\epsilon(t)})+g(t)) dt
+\tilde{\sigma}^\epsilon(t, u^\epsilon(t),u^\epsilon(t-r), \mathcal{L}_{u^\epsilon(t)})dW(t), \quad t >\tau
\end{split}
\end{equation}
and
\begin{equation}\label{ob-3''}
\begin{split}
& du(t)+\nu A u(t) dt+\lambda u(t) dt
\\
&=(f(t, u(t),u(t-r))+g(t)) dt+\tilde{\sigma}(t, u(t),u(t-r))dW(t), t >\tau,
\end{split}
\end{equation}
where the operator $\tilde{\sigma}^\epsilon: \mathbb{R} \times \ell^2 \times \ell^2 \times \mathcal{L}^2 \rightarrow L(\ell^2, \ell^2)$ is defined
by \begin{eqnarray*}
\tilde{\sigma}^\epsilon(t, u,v, \mu)w=(\sigma_i^\epsilon(t, u_i, v_i, \mu_i)w_i)_{i \in \mathbb{Z}}
\end{eqnarray*}
and the operator $\tilde{\sigma}: \mathbb{R} \times \ell^2 \times \ell^2 \rightarrow L(\ell^2, \ell^2)$ is defined by
\begin{eqnarray*}
\tilde{\sigma}^\epsilon(t, u,v)w=(\sigma_i^\epsilon(t, u_i, v_i)w_i)_{i \in \mathbb{Z}}
\end{eqnarray*}
for every $u=(u_i)_{i \in \mathbb{Z}}$, $v=(v_i)_{i \in \mathbb{Z}}$, $w=(w_i)_{i \in \mathbb{Z}} \in \ell^2$ and $\mu=(\mu_i)_{i \in \mathbb{Z}} \in \mathcal{L}^2$.
\vskip0.1in
Clearly, by using \eqref{limc3}, for all $t \in \mathbb{R}$, $u,v \in \ell^2$ and $\mu \in \mathcal{L}^2$, we have
\begin{align}
\left\|\tilde{\sigma}^\epsilon(t, u, v, \mu)-\tilde{\sigma}(t,u, v)\right\|_{L_2(\ell^2, \ell^2)}^2 \le & \, 3 \epsilon^2 \|\tilde{\psi}(t)\|^2
(\|u\|^2+\|v\|^2+\rho(\mu,\hat{\delta}_{0})^2), \label{limc3'}
\end{align}

We denote by $u^\epsilon(t, \tau, u_\tau)$ the solution of \eqref{ob-3'} with the initial condition $u^\epsilon(\tau)=u_\tau$.
The next lemma is about the convergence of solutions of \eqref{ob-3'} as $\epsilon \to 0$.

\begin{lm}\label{up1}
Assume that \eqref{limc1}-\eqref{limc3} hold. Then
for every $v \in \mathbb{R}^+$, $\tau \in \mathbb{R}$ and $R>0$, the solutions of \eqref{ob-3'} and \eqref{ob-3''} satisfy:
 $$\lim\limits_{\epsilon  \to 0^+ }\ \sup\limits_{ \|u_\tau\|_{L^2(\Omega, \mathcal{C}_r)}\le R} \  \
 \mathbb{E} \left(\|u_{\tau+v}^\epsilon (\cdot, \tau, u_\tau)-u_{\tau+v}(\cdot, \tau, u_\tau)\|_{\mathcal{C}_r}^2 \right)=0.$$
\end{lm}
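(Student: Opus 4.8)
The plan is to estimate the difference $w^\epsilon(t):=u^\epsilon(t,\tau,u_\tau)-u(t,\tau,u_\tau)$ directly in $L^2(\Omega,\ell^2)$, uniformly over initial data with $\|u_\tau\|_{L^2(\Omega,\mathcal C_r)}\le R$, and then pass from the pointwise-in-$t$ bound to the sup-in-segment bound on $[\tau+v-r,\tau+v]$. First I would write the integral equation satisfied by $w^\epsilon$: it solves the lattice equation driven by $\nu A$ and $\lambda$ with right-hand side the difference of the nonlinearities plus the difference of the diffusion coefficients against $dW$. Applying the It\^o formula to $\|w^\epsilon(t)\|^2$, taking sup over $[\tau,\varrho]$ and expectation, and then using the BDG inequality exactly as in the proof of Lemma~\ref{P-dis} (Step 2 of Proposition~\ref{Pro1}), one arrives at an inequality of the form
\begin{equation*}
\mathbb E\Big(\sup_{\tau\le\varrho\le t}\|w^\epsilon(\varrho)\|^2\Big)\le C\int_\tau^t \mathbb E\Big(\sup_{\tau\le\varrho\le s}\|w^\epsilon(\varrho)\|^2\Big)ds + C\,\mathcal R_\epsilon(t),
\end{equation*}
where $C=C(\tau,v)$ is independent of $\epsilon$ and of $u_\tau$, and $\mathcal R_\epsilon(t)$ collects the ``error'' terms coming from $\|f^\epsilon(s,\cdot)-f(s,\cdot)\|^2$ and $\|\tilde\sigma^\epsilon(s,\cdot)-\tilde\sigma(s,\cdot)\|_{L_2}^2$.

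The key point is to control $\mathcal R_\epsilon(t)$. By the triangle inequality, $\|f^\epsilon(s,u^\epsilon(s),u^\epsilon(s-r),\mathcal L_{u^\epsilon(s)})-f(s,u(s),u(s-r))\|^2$ is bounded by twice $\|f^\epsilon(s,u^\epsilon(s),u^\epsilon(s-r),\mathcal L_{u^\epsilon(s)})-f(s,u^\epsilon(s),u^\epsilon(s-r))\|^2$ plus twice $\|f(s,u^\epsilon(s),u^\epsilon(s-r))-f(s,u(s),u(s-r))\|^2$; the first piece is $O(\epsilon^2)$ by \eqref{limc1} together with the uniform $L^2$-estimate of $u^\epsilon$ from Lemma~\ref{est-1+} (applied on the finite horizon $[\tau,\tau+v]$, which gives a bound depending only on $R$, $\tau$, $v$), while the second piece feeds back into the Gronwall term via the Lipschitz bound \eqref{f-Lip} (using \eqref{fi-3} and \eqref{fi-Lip}) and \eqref{dis-property}. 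The diffusion term is handled identically using \eqref{limc3'}, \eqref{segama-Lip} and \eqref{dis-property}, with the distributional distance $\rho(\mathcal L_{u^\epsilon(s)},\mathcal L_{u(s)})^2\le\mathbb E\|u^\epsilon(s)-u(s)\|^2$ again absorbed into the Gronwall term. Thus $\mathcal R_\epsilon(t)\le \epsilon^2 C'(R,\tau,v)$ uniformly in $u_\tau$, and Gronwall's inequality yields $\mathbb E(\sup_{\tau\le\varrho\le\tau+v}\|w^\epsilon(\varrho)\|^2)\le \epsilon^2 C''(R,\tau,v)$.

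Finally, to obtain the statement as written I would note $\mathbb E\big(\|u^\epsilon_{\tau+v}(\cdot,\tau,u_\tau)-u_{\tau+v}(\cdot,\tau,u_\tau)\|_{\mathcal C_r}^2\big)=\mathbb E\big(\sup_{\tau+v-r\le\varrho\le\tau+v}\|w^\epsilon(\varrho)\|^2\big)$, which is at most $\mathbb E\big(\sup_{\tau\le\varrho\le\tau+v}\|w^\epsilon(\varrho)\|^2\big)+\sup_{-r\le\theta\le 0}\mathbb E\|w^\epsilon(\tau+v+\theta)\|^2$ when $v\ge r$, and equals $0$ on the part of the segment lying in $[\tau-r,\tau]$ since both solutions share the initial datum $u_\tau$; in all cases it is bounded by $\mathbb E(\sup_{\tau-r\le\varrho\le\tau+v}\|w^\epsilon(\varrho)\|^2)$, which tends to $0$ as $\epsilon\to0^+$ uniformly over $\|u_\tau\|_{L^2(\Omega,\mathcal C_r)}\le R$ by the previous step. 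The main obstacle is bookkeeping: one must make sure every constant that enters (through the uniform estimates of $u^\epsilon$, through $\|\tilde\varphi\|_{L^\infty}$, $\|\tilde\psi\|_{L^\infty}$, through the BDG constant $c_1$, and through the Lipschitz constants $\|\Theta\|_{L^\infty([\tau,\tau+v],\ell^2)}$, $\|\psi\|_{L^\infty([\tau,\tau+v],\ell^2)}$, $\|\chi\|_{L^\infty(\mathbb R,\ell^2)}$) is genuinely independent of $\epsilon$ and of the particular initial distribution, so that the supremum over $\|u_\tau\|_{L^2(\Omega,\mathcal C_r)}\le R$ can be taken before letting $\epsilon\to0$.
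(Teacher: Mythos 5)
Your proposal is correct and follows essentially the same route as the paper: write the equation for $w^\epsilon=u^\epsilon-u$, apply the It\^o formula with BDG, split the drift and diffusion differences by the triangle inequality into a uniformly Lipschitz part (absorbed by Gronwall via \eqref{dis-property}) and an $O(\epsilon)$ part controlled by \eqref{limc1}, \eqref{limc3'} together with the finite-horizon second-moment bound \eqref{cha}, then conclude via Gronwall and the fact that $w^\epsilon\equiv 0$ on $[\tau-r,\tau]$. The only cosmetic difference is that you add and subtract the limiting $f$ evaluated at $u^\epsilon$ (so your error term needs a uniform-in-$\epsilon$ moment bound for $u^\epsilon$), whereas the paper adds and subtracts $f^\epsilon$ evaluated at $u$ so that only the moment bound for $u$ enters; both work since (H1)--(H3) hold uniformly in $\epsilon$.
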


\begin{proof}
Let $ u^\epsilon(t)=u^\epsilon(t, \tau, u_\tau )$, $u(t)=u(t, \tau, u_\tau)$ and $w(t)=u^\epsilon(t)-u(t)$.
By \eqref{ob-3'} and \eqref{ob-3''}, we obtain that for all $t>\tau$,
\begin{equation*}
\begin{split}
& dw(t)+\nu A w(t) dt + \lambda w(t)dt
\\
& = \big(f^\epsilon \left(t,u^\epsilon(t), u^\epsilon(t-r), \mathcal{L}_{u^\epsilon(t)}\right)-f\left(t,u(t),u(t-r)\right)\big)dt
+\big(g^\epsilon(t)-g(t)\big)dt
\\
&\quad +\big(\sigma^\epsilon(t, u^\epsilon(t), u^\epsilon(t-r), \mathcal{L}_{u^\epsilon(t)})-\sigma(t, u(t), u(t-r))\big)dW(t).
\end{split}
\end{equation*}
By the It\^{o} formula, taking the supremum and expectation, we get all $t \ge \tau$,
\begin{equation} \label{up1-3}
 \begin{split}
 & \mathbb{E} (\sup_{\tau \le \varrho \le \tau+v} \|w(\varrho)\|^2)
 \\
  \le & 2\mathbb{E} \sup_{\tau \le \varrho \le \tau+v} \int_ {\tau}^{\varrho} ([f^\epsilon \left(s,u^\epsilon(s), u^\epsilon(s-r), \mathcal{L}_{u^\varepsilon(s)}\right)-f\left(s,u(s),u(s-r)\right)], w(s)) ds
 \\
 & + \mathbb{E} \sup_{\tau \le \varrho \le \tau+v}  \int_{\tau}^{\varrho} \|\sigma^\epsilon(s, u^\epsilon(s), u^\epsilon(s-r), \mathcal{L}_{u^\epsilon(s)})-\sigma(s, u(s), u(s-r))\|_{L_2(\ell^2,\ell^2)}^2 ds
 \\
 &+ 2 \mathbb{E} \sup_{\tau \le \varrho \le \tau+v} \biggl|\int_ {\tau}^{\varrho} (w(s), \left[ \sigma^\epsilon(s, u^\epsilon(s), u^\epsilon(s-r), \mathcal{L}_{u^\epsilon(s)})-\sigma(s, u(s), u(s-r)) \right] dW(s))\biggr|.
 \end{split}
 \end{equation}
Next, we estimate each term on the right-hand side of \eqref{up1-3} individually. For the first term on the right-hand side of
\eqref{up1-3}, we notice that
\begin{equation*}
 \begin{split}
& \int_ {\tau}^{\varrho} ([f^\epsilon \left(s,u^\epsilon(s), u^\epsilon(s-r), \mathcal{L}_{u^\epsilon(s)}\right)-f\left(s,u(s),u(s-r)\right)], w(s)) ds
\\
=& \int_ {\tau}^{\varrho} ([f^\epsilon \left(s,u^\epsilon(s), u^\epsilon(s-r), \mathcal{L}_{u^\epsilon(s)}\right)-f^\epsilon\left(s,u(s),u^\epsilon(s-r), \mathcal{L}_{u^\epsilon(s)}\right)], w(s)) ds
\\
& +\int_ {\tau}^{\varrho} ([f^\epsilon\left(s,u(s),u^\epsilon(s-r), \mathcal{L}_{u^\epsilon(s)}\right)
-f^\epsilon \left(s,u(s), u(s-r), \mathcal{L}_{u(s)}\right)], w(s)) ds
\\
& + \int_ {\tau}^{\varrho} ([f^\epsilon \left(s,u(s), u(s-r), \mathcal{L}_{u(s)}\right)
-f\left(s,u(s),u(s-r)\right)], w(s)) ds.
 \end{split}
 \end{equation*}
Along with \eqref{fi-2}, \eqref{f-Lip} and \eqref{limc3'}, we get that
\begin{equation}\label{up1-4}
\begin{split}
& 2\mathbb{E} \sup_{\tau \le \varrho \le \tau+v} \int_ {\tau}^{\varrho} ([f^\epsilon \left(s,u^\epsilon(s), u^\epsilon(s-r), \mathcal{L}_{u^\epsilon(s)}\right)-f\left(s,u(s),u(s-r)\right)], w(s)) ds
\\
\le & \, 2 \mathbb{E} \int_{\tau}^{\tau+v} \|\Theta(s)\| \|w(s)\|^2 ds
+2 \mathbb{ E} \int_{\tau}^{\tau+v} \sum_{i \in \mathbb{Z}} \psi_i(s)
(|w_i(s-r)|+ (\mathbb{E}|w_i(s)|^2)^{\frac12}) |w_i(s)| ds
\\
&\, +2\epsilon \mathbb{E} \int_{\tau}^{\tau+v}   \sum_{i \in \mathbb{Z}}
\tilde{\varphi}_i (t)(|u_i(s)|+|u_i(s-r)|+(\mathbb{E}|u_i(s)|^2)^{\frac{1}{2}})|w_i(s)|ds
\end{split}
\end{equation}
\begin{equation*}
 \begin{split}
\le & \,  2\|\Theta\|_{L^{\infty}([\tau, \tau+v],\ell^2 )} \int_{\tau}^{\tau+v} \mathbb{E}\|w(s)\|^2 ds
+ \mathbb{E}\int_{\tau}^{\tau+v} \|\psi(s) \|\|w(s)\|^2 ds
\\
& \, +\mathbb{E} \int_{\tau}^{\tau+v}  \|\psi(s)\|
(\|w(s-r)\|^2 + \mathbb{E}\|w(s)\|^2) ds
\\
&\,+3\epsilon \mathbb{E} \int_{\tau}^{\tau+v}
\|\tilde{\varphi}(t)\|(\|u(s)\|^2+\|u(s-r)\|^2+\mathbb{E}\|u(s)\|^2)ds
+\epsilon \mathbb{E} \int_{\tau}^{\tau+v} \|\tilde{\varphi}(t)\| \|w(s)\|^2ds
\\
\le & \,  (2\|\Theta\|_{L^{\infty}([\tau, \tau+v],\ell^2 )}+3\|\psi\|_{L^{\infty}([\tau, \tau+v],\ell^2)}+\epsilon\|\tilde{\varphi}\|_{L^{\infty}([\tau, \tau+v],\ell^2 )})
\mathbb{E} \int_{\tau}^{\tau+v} \|w(s)\|^2 ds
\\
&\,+9\epsilon\|\tilde{\varphi}\|_{L^{\infty}([\tau, \tau+v],\ell^2 )} \int_{\tau}^{\tau+v}
\mathbb{E}\|u(s)\|^2ds+3\epsilon r \|\tilde{\varphi}\|_{L^{\infty}([\tau, \tau+v],\ell^2 )}
\mathbb{E}\|u_\tau\|_{\mathcal{C}_r}^2.
\end{split}
\end{equation*}
For the second term on the right-hand side of \eqref{up1-3}, we notice that
\begin{eqnarray*}
\!\!\!&&\!\!\! \int_{\tau}^{\varrho} \|\sigma^\epsilon(s, u^\epsilon(s), u^\epsilon(s-r), \mathcal{L}_{u^\epsilon(s)})-\sigma(s, u(s), u(s-r))\|_{L_2(\ell^2,\ell^2)}^2 ds
\\
\!\!\!&\le&\!\!\! \int_{\tau}^{\varrho} \|\sigma^\epsilon(s, u^\epsilon(s), u^\epsilon(s-r), \mathcal{L}_{u^\epsilon(s)})-\sigma^\epsilon(s, u(s), u(s-r),\mathcal{L}_{u(s)})\|_{L_2(\ell^2,\ell^2)}^2 ds
\\
\!\!\!&&\!\!\! +\int_{\tau}^{\varrho} \|\sigma^\epsilon(s, u(s), u(s-r), \mathcal{L}_{u(s)})-\sigma(s, u(s), u(s-r))\|_{L_2(\ell^2,\ell^2)}^2 ds,
\end{eqnarray*}
By \eqref{segama-Lip} and \eqref{limc3'}, we get that
\begin{equation}\label{up1-6}
\begin{split}
& \mathbb{E} \sup_{\tau \le \varrho \le \tau+v}  \int_{\tau}^{\varrho} \|(\sigma^\epsilon(s, u^\epsilon(s), u^\epsilon(s-r), \mathcal{L}_{u^\epsilon(s)})-\sigma(s, u(s), u(s-r)))\|_{L_2(\ell^2,\ell^2)}^2 ds
\\
\le & \,  3\mathbb{ E} \int_{\tau}^{\tau+v}  \|\chi(t)\|^2(\|w(s)\|^2+\|w(s-r)\|^2+\mathbb{E}\|w(s)\|^2)ds
\\
&\, +3 \epsilon^2 \mathbb{ E} \int_{\tau}^{\tau+v} \|\tilde{\psi}(t)\|^2
(\|u(s)\|^2+\|u(s-r)\|^2+\mathbb{E}\|u(s)\|^2) ds
\end{split}
\end{equation}
\begin{equation*}
\begin{split}
\le & \,  9\|\chi\|^2_{L^{\infty}(\mathbb{R}, \ell^2)}
\mathbb{E}\int_ {\tau}^{\tau+v}\|w(s)\|^2 ds +9\epsilon^2 \|\tilde{\psi}\|^2_{L^{\infty}([\tau, \tau+v], \ell^2)}
\int_ {\tau}^{\tau+v}\mathbb{E}\|u(s)\|^2 ds
\\
& \, +3\epsilon^2 r \|\tilde{\psi}\|^2_{L^{\infty}([\tau, \tau+v], \ell^2)}
\mathbb{E}\|u_{\tau}\|_{\mathcal{C}_r}^2.
\end{split}
\end{equation*}
For the last term on the right-hand side of \eqref{up1-3}, by \eqref{up1-6} we have
\begin{equation}\label{up1-7}
\begin{split}
&    2 \mathbb{E} \sup_{\tau \le \varrho \le \tau+v} \biggl|\int_ {\tau}^{\varrho} (w(s),
\left[ \sigma^\epsilon(s, u^\epsilon(s), u^\epsilon(s-r), \mathcal{L}_{u^\epsilon(s)})-\sigma(s, u(s), u(s-r))\right] dW(s))\biggr|
\\
\le &\, 2c_1\mathbb{E}\{\sup_{\tau \le \varrho\le \tau+v} \|w(\varrho)\|^2
\\
&\,\qquad\qquad\qquad \cdot
(\int_{\tau}^{ \tau+v }
\|\sigma^\epsilon(s, u^\epsilon(s), u^\epsilon(s-r), \mathcal{L}_{u^\epsilon(s)})-\sigma(s, u(s), u(s-r)) \|_{L_2(\ell^2, \ell^2)}^2 ds)\}^{\frac{1}{2}}
\\
\le &\, \frac{1}{2} \mathbb{E}(\sup_{\tau \le \varrho\le \tau+v}\|w(\varrho)\|^2)
\\
&\,+ 2 c_1^2 \mathbb{E}\int_{\tau}^{\tau+v}
\|\sigma^\epsilon(s, u^\epsilon(s), u^\epsilon(s-r), \mathcal{L}_{u^\epsilon(s)})-\sigma(s, u(s), u(s-r))\|_{L_2(\ell^2, \ell^2)}^2 ds
\\
\le &\,  \frac{1}{2} \mathbb{E}(\sup_{\tau \le \varrho\le \tau+v}\|w(\varrho)\|^2)
+18c_1^2 \|\chi\|^2_{L^{\infty}(\mathbb{R}, \ell^2)}
\mathbb{E}\int_ {\tau}^{\tau+v}\|w(s)\|^2 ds
\\
&\, +18c_1^2\epsilon^2 \|\tilde{\psi}\|^2_{L^{\infty}([\tau, \tau+v], \ell^2)}
\int_ {\tau}^{\tau+v}\mathbb{E}\|u(s)\|^2 ds
+6 c_1^2\epsilon^2 r \|\tilde{\psi}\|^2_{L^{\infty}([\tau, \tau+v], \ell^2)}
\mathbb{E}\|u_{\tau}\|_{\mathcal{C}_r}^2.
\end{split}
\end{equation}
Combining \eqref{up1-3}-\eqref{up1-7}, we get that
 \begin{equation*}
\begin{split}
&  \mathbb{E} (\sup_{\tau \le \varrho\le \tau+v} \|w(\varrho)\|^2)
\\
\le &\, \frac{1}{2} \mathbb{E}(\sup_{\tau \le \varrho\le \tau+v}\|w(\varrho)\|^2)+\{(2\|\Theta\|_{L^{\infty}([\tau, \tau+v],\ell^2 )}+3\|\psi\|_{L^{\infty}([\tau, \tau+v],\ell^2)}+\epsilon\|\tilde{\varphi}\|_{L^{\infty}([\tau, \tau+v],\ell^2 )})
\\
& \, +9 \|\chi\|^2_{L^{\infty}(\mathbb{R}, \ell^2)}(1+2c_1^2)\}
\mathbb{E}\int_{\tau}^{\tau+v} \|w(s)\|^2 ds
\\
& +\left[9\epsilon\|\tilde{\varphi}\|_{L^{\infty}([\tau, \tau+v],\ell^2 )}+9\epsilon^2 \|\tilde{\psi}\|^2_{L^{\infty}([\tau, \tau+v], \ell^2)}
(1+2c_1^2)\right]\int_{\tau}^{\tau+v}
\mathbb{E}\|u(s)\|^2ds
\\
&+\epsilon r\left[3 \|\tilde{\varphi}\|_{L^{\infty}([\tau, \tau+v],\ell^2 )}+3\epsilon\|\tilde{\psi}\|^2_{L^{\infty}([\tau, \tau+v], \ell^2)}
(1+2c_1^2)\right]
\mathbb{E}\|u_\tau\|_{\mathcal{C}_r}^2.
\end{split}
\end{equation*}
Thus, there exist constants $C_2$ and $C_3$ such that
 \begin{equation*}
\begin{split}
&  \mathbb{E} (\sup_{\tau \le \varrho\le \tau+v} \|w(\varrho)\|^2)
\\
\le &\, C_2 \mathbb{E} \int_{\tau}^{\tau+v}\|w(s)\|^2 ds+\epsilon C_3(1+\mathbb{E}\|u_\tau\|_{\mathcal{C}_r}^2+
\int_{\tau}^{\tau+v}\mathbb{E}\|u(s)\|^2 ds)
\\
\le &\, C_2 \mathbb{E}\int_{\tau}^{\tau+v} \sup_{\tau \le \varrho\le s}\|w(\varrho)\|^2 ds+\epsilon C_3(1+\mathbb{E}\|u_\tau\|_{\mathcal{C}_r}^2+
\int_{\tau}^{\tau+v}\mathbb{E}\|u(s)\|^2 ds).
\end{split}
\end{equation*}
Using the Gronwall inquality, we obtain that
\begin{eqnarray*}
\mathbb{E} (\sup_{\tau \le \varrho\le \tau+v} \|w(\varrho)\|^2) \le
\epsilon C_3(1+\mathbb{E}\|u_\tau\|_{\mathcal{C}_r}^2+
\int_{\tau}^{\tau+v}\mathbb{E}\|u(s)\|^2 ds) e^{C_2 \tau}.
\end{eqnarray*}
Finally, using the condition that $\mathbb{E}(\|u_\tau\|_{\mathcal{C}_r}^2) \le R^2$ and the facts \eqref{cha} and
\begin{eqnarray*}
\mathbb{E} (\sup_{\tau-r \le \varrho\le \tau} \|w(\varrho)\|^2)=0.
\end{eqnarray*}
Thus, the proof of Lemma \ref{up1} is complete.
\end{proof}

We now write
 the
non-autonomous dynamical system associated
with  \eqref{ob-1'}
as $\Phi^\epsilon$
to highlight its dependence
on $\epsilon$,  and  use $\Phi$ for  the
 dynamical system associated
with  \eqref{ob-1''}.
The $\mathcal{D}$-pullback measure
attractors of $\Phi^\epsilon$ and
$\Phi$ are denoted by
$\mathcal{A}^\epsilon$  and
$\mathcal{A}$ respectively.

\vskip0.1in

 As an immediate consequence of Lemma \ref{up1}, we obtain
the following convergence result
for  $\Phi^\epsilon$.

\begin{cor}\label{up2}
If \eqref{limc1}-\eqref{limc3} hold, then
for every  $t\in \mathbb{R}^+$,
$\tau \in \mathbb{R}$ and $R>0$, $\Phi^\epsilon$ satisfies:
   $$ \lim_{\epsilon \to 0^+}\ \sup_{  \mu (\|\cdot\|_{\mathcal{C}_r}^2) \le R}\ \mathbb{W}_2 \left (\Phi^\epsilon (t,\tau)\mu,\
   \Phi (t,\tau)\mu \right)=0,$$
and hence
$$\lim_{\epsilon \to 0^+}\ \sup_{  \mu (\|\cdot\|_{\mathcal{C}_r}^2) \le R}\ d_{{\mathcal{P}(\mathcal{C}_r)}}
  \left (\Phi^\epsilon (t,\tau)\mu, \ \Phi(t,\tau)\mu \right)=0.$$
\end{cor}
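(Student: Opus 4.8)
The plan is to deduce Corollary \ref{up2} from Lemma \ref{up1} via an explicit coupling of $\Phi^\epsilon(t,\tau)\mu$ and $\Phi(t,\tau)\mu$. Fix $t\in\mathbb{R}^+$, $\tau\in\mathbb{R}$ and $R>0$, and let $\mu\in\mathcal{P}_2(\mathcal{C}_r)$ satisfy $\mu(\|\cdot\|_{\mathcal{C}_r}^2)\le R$. First I would pick an $\mathcal{F}_\tau$-measurable random variable $u_\tau\in L^2(\Omega,\mathcal{C}_r)$ with $\mathcal{L}_{u_\tau}=\mu$, so that $\|u_\tau\|_{L^2(\Omega,\mathcal{C}_r)}^2=\mu(\|\cdot\|_{\mathcal{C}_r}^2)\le R$. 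Solving \eqref{ob-3'} and \eqref{ob-3''} with this common initial datum and the same driving Wiener process $W$ on $(\Omega,\mathcal{F},\{\mathcal{F}_t\}_{t\ge\tau},\mathbb{P})$ yields the strong solutions $u^\epsilon(\cdot,\tau,u_\tau)$ and $u(\cdot,\tau,u_\tau)$. By the definitions of $\Phi^\epsilon$ and $\Phi$ (cf. \eqref{df-P}) together with the weak uniqueness from Proposition \ref{weak-uniquess} --- which ensures that the law of the solution segment depends on $\mu$ only, not on the chosen representative $u_\tau$ --- the pair $\bigl(u^\epsilon_{\tau+t}(\cdot,\tau,u_\tau),\,u_{\tau+t}(\cdot,\tau,u_\tau)\bigr)$ is a coupling of $\Phi^\epsilon(t,\tau)\mu$ and $\Phi(t,\tau)\mu$.

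Next, from the definition of the Wasserstein distance on $\mathcal{P}_2(\mathcal{C}_r)$ applied to this coupling,
\[
\mathbb{W}_2\bigl(\Phi^\epsilon(t,\tau)\mu,\,\Phi(t,\tau)\mu\bigr)^2
\le \mathbb{E}\bigl\|u^\epsilon_{\tau+t}(\cdot,\tau,u_\tau)-u_{\tau+t}(\cdot,\tau,u_\tau)\bigr\|_{\mathcal{C}_r}^2 .
\]
Since every $\mu$ with $\mu(\|\cdot\|_{\mathcal{C}_r}^2)\le R$ can be realised by some $u_\tau$ with $\|u_\tau\|_{L^2(\Omega,\mathcal{C}_r)}\le\sqrt{R}$, taking the supremum over all such $\mu$ bounds the left-hand side by $\sup_{\|u_\tau\|_{L^2(\Omega,\mathcal{C}_r)}\le\sqrt{R}}\mathbb{E}\|u^\epsilon_{\tau+t}(\cdot,\tau,u_\tau)-u_{\tau+t}(\cdot,\tau,u_\tau)\|_{\mathcal{C}_r}^2$, and Lemma \ref{up1} with $v=t$ (and $R$ replaced by $\sqrt{R}$) shows that this tends to $0$ as $\epsilon\to0^+$. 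This proves the first assertion.

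For the second assertion I would use the elementary comparison $d_{\mathcal{P}(\mathcal{C}_r)}(\mu_1,\mu_2)\le\mathbb{W}_2(\mu_1,\mu_2)$, valid because any $\xi$ with $\|\xi\|_{L_b(\mathcal{C}_r)}\le1$ is in particular $1$-Lipschitz, so testing against any coupling and applying H\"older's inequality gives $|\int\xi\,d\mu_1-\int\xi\,d\mu_2|\le\mathbb{W}_1(\mu_1,\mu_2)\le\mathbb{W}_2(\mu_1,\mu_2)$. Consequently $\sup_{\mu(\|\cdot\|_{\mathcal{C}_r}^2)\le R}d_{\mathcal{P}(\mathcal{C}_r)}(\Phi^\epsilon(t,\tau)\mu,\Phi(t,\tau)\mu)\le\sup_{\mu(\|\cdot\|_{\mathcal{C}_r}^2)\le R}\mathbb{W}_2(\Phi^\epsilon(t,\tau)\mu,\Phi(t,\tau)\mu)\to0$ as $\epsilon\to0^+$. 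Since the genuine analytic estimate is already contained in Lemma \ref{up1}, the only point that needs a little care here is the measurable realisation of an arbitrary $\mu\in\mathcal{P}_2(\mathcal{C}_r)$ by an $\mathcal{F}_\tau$-measurable $L^2$ random variable on the given stochastic basis, together with the invocation of weak uniqueness that makes the coupling legitimate; everything else is routine.
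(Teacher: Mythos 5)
Your argument is correct and is exactly the route the paper intends: the corollary is stated there as an immediate consequence of Lemma \ref{up1}, and your coupling via a common $\mathcal{F}_\tau$-measurable representative $u_\tau$ of $\mu$ (legitimised by weak uniqueness), the bound $\mathbb{W}_2^2\le\mathbb{E}\|u^\epsilon_{\tau+t}-u_{\tau+t}\|_{\mathcal{C}_r}^2$, and the comparison $d_{\mathcal{P}(\mathcal{C}_r)}\le\mathbb{W}_1\le\mathbb{W}_2$ supply precisely the missing routine details. No gaps.
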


At the end of this section,  we prove the upper semi-continuity of $\mathcal{D}$-pullback measure attractors
of \eqref{ob-3'} as $\epsilon \rightarrow 0^+$.

\begin{thm}\label{main_up}
Assume that \eqref{limc1}-\eqref{limc3} hold. Then, for every $\tau \in \mathbb{R}$,
$$\lim\limits_{ \epsilon \rightarrow 0^+} d_{ \mathcal{P}_4(\mathcal{C}_r)}\Big(\mathcal{A}^\epsilon(\tau),
  \mathcal{A}(\tau)\Big)=0,$$
where $d_{ \mathcal{P}_4(\mathcal{C}_r)}(\cdot, \cdot)$ is the Hausdorff semi-distance of sets in terms of $\mathcal{P}_4(\mathcal{C}_r)$.
\end{thm}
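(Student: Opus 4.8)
The plan is to derive the statement from the abstract machinery already assembled: the existence of $\mathcal{D}$-pullback measure attractors $\mathcal{A}^\epsilon$ and $\mathcal{A}$ for $\Phi^\epsilon$ and $\Phi$ (Theorem \ref{main_e} applied to \eqref{ob-3'} and to \eqref{ob-3''}, both admissible since $f^\epsilon,\sigma^\epsilon$ satisfy {\bf (H1)}--{\bf (H3)}, \eqref{lamda}, \eqref{g^2}--\eqref{g^4} and $f,\sigma$ satisfy the corresponding distribution-free conditions), the continuity of $\Phi$ on bounded subsets of $\mathcal{P}_4(\mathcal{C}_r)$ (Corollary \ref{cor}), and the uniform convergence $\Phi^\epsilon\to\Phi$ on bounded sets (Corollary \ref{up2}). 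The first step is to produce a $\mathcal{D}$-pullback absorbing set common to all $\Phi^\epsilon$: since {\bf (H1)}--{\bf (H3)} hold uniformly in $\epsilon\in(0,1)$, the constant $M_3$ of Lemma \ref{est-3} (which depends only on $\|\eta\|_{L^\infty}$, $\|\chi\|_{L^\infty}$, $\|\kappa\|_{L^\infty}$, $c_1$, $\lambda$, $r$, $\varepsilon$) is $\epsilon$-independent, so the family $K=\{K(\tau)\}$ given by \eqref{absorbset-1} is a closed $\mathcal{D}$-pullback absorbing set for every $\Phi^\epsilon$ at once. By invariance of $\mathcal{A}^\epsilon$ and the absorbing property one gets $\mathcal{A}^\epsilon(\sigma)\subseteq K(\sigma)$ for all $\epsilon\in(0,1)$, $\sigma\in\mathbb{R}$; in particular $\bigcup_{\epsilon\in(0,1)}\mathcal{A}^\epsilon(\sigma)$ is bounded in $\mathcal{P}_4(\mathcal{C}_r)$, hence also in $\mathcal{P}_2(\mathcal{C}_r)$, uniformly in $\epsilon$.

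Next I argue by contradiction. Suppose the conclusion fails at some $\tau$: there are $\delta>0$, $\epsilon_n\downarrow 0$ and $\mu_n\in\mathcal{A}^{\epsilon_n}(\tau)$ with $d_{\mathcal{P}(\mathcal{C}_r)}(\mu_n,\mathcal{A}(\tau))\ge\delta$ for all $n$. Fix $\eta\in(0,\delta)$. Since $K\in\mathcal{D}$ and $\mathcal{A}$ attracts $K$, there is $T=T(\eta,\tau)>0$ with $d_{\mathcal{P}(\mathcal{C}_r)}\big(\Phi(t,\tau-t)K(\tau-t),\mathcal{A}(\tau)\big)<\eta/2$ for all $t\ge T$; fix such a $t$. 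By the invariance $\Phi^{\epsilon_n}(t,\tau-t)\mathcal{A}^{\epsilon_n}(\tau-t)=\mathcal{A}^{\epsilon_n}(\tau)$, choose $\nu_n\in\mathcal{A}^{\epsilon_n}(\tau-t)\subseteq K(\tau-t)$ with $\Phi^{\epsilon_n}(t,\tau-t)\nu_n=\mu_n$. Then
\begin{equation*}
d_{\mathcal{P}(\mathcal{C}_r)}(\mu_n,\mathcal{A}(\tau))\le
d_{\mathcal{P}(\mathcal{C}_r)}\big(\Phi^{\epsilon_n}(t,\tau-t)\nu_n,\Phi(t,\tau-t)\nu_n\big)
+d_{\mathcal{P}(\mathcal{C}_r)}\big(\Phi(t,\tau-t)\nu_n,\mathcal{A}(\tau)\big).
\end{equation*}
Because $\nu_n\in K(\tau-t)$ the second term is $\le d_{\mathcal{P}(\mathcal{C}_r)}\big(\Phi(t,\tau-t)K(\tau-t),\mathcal{A}(\tau)\big)<\eta/2$; and because $\{\nu_n\}$ lies in the $\mathcal{P}_2(\mathcal{C}_r)$-bounded set $K(\tau-t)$ (with an $n$-independent bound), Corollary \ref{up2} shows the first term is bounded by $\sup_{\mu(\|\cdot\|_{\mathcal{C}_r}^2)\le R}d_{\mathcal{P}(\mathcal{C}_r)}\big(\Phi^{\epsilon_n}(t,\tau-t)\mu,\Phi(t,\tau-t)\mu\big)\to 0$ as $n\to\infty$, where $R=\|K(\tau-t)\|_{\mathcal{P}_2(\mathcal{C}_r)}^2$. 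Hence $\limsup_n d_{\mathcal{P}(\mathcal{C}_r)}(\mu_n,\mathcal{A}(\tau))\le\eta/2<\delta$, contradicting the choice of $\mu_n$. This establishes $\lim_{\epsilon\to 0^+}d_{\mathcal{P}(\mathcal{C}_r)}(\mathcal{A}^\epsilon(\tau),\mathcal{A}(\tau))=0$. Note that no compactness of $\bigcup_n\mathcal{A}^{\epsilon_n}(\tau)$ is needed, since one bounds $d_{\mathcal{P}(\mathcal{C}_r)}(\mu_n,\mathcal{A}(\tau))$ directly rather than extracting a limit of $\mu_n$.

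Finally one reconciles the metrics. All measures occurring above sit in $K(\tau)$ (and $K(\tau-t)$), which is bounded in $\mathcal{P}_4(\mathcal{C}_r)$ uniformly in $\epsilon$ and hence uniformly integrable; on such a set weak convergence ($d_{\mathcal{P}(\mathcal{C}_r)}$) is equivalent to convergence in $\mathbb{W}_2$, so the Hausdorff semi-distance $d_{\mathcal{P}_4(\mathcal{C}_r)}$ appearing in the statement agrees on these sets with the one induced by $d_{\mathcal{P}(\mathcal{C}_r)}$, and the displayed limit carries over. The theorem is therefore, at this stage, a short abstract argument; the only places requiring genuine care — and where the hypotheses \eqref{limc1}--\eqref{limc3} together with the uniformity of {\bf (H1)}--{\bf (H3)} really enter — are (i) checking that the absorbing family $K$, and with it the $\mathcal{P}_2$/$\mathcal{P}_4$ bounds on $\bigcup_\epsilon\mathcal{A}^\epsilon(\sigma)$, is truly $\epsilon$-independent, and (ii) keeping the weak versus Wasserstein metric bookkeeping consistent so that Corollary \ref{up2} and the attraction property of $\mathcal{A}$ are invoked in the same topology. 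I expect (i) to be the main, though routine, obstacle, as it rests on retracing the constants in Lemmas \ref{est-3} and \ref{absorbset} to confirm their uniformity in $\epsilon$.
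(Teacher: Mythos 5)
Your proposal is correct and follows essentially the same route as the paper: both rest on the $\epsilon$-independent absorbing family $K$ from \eqref{absorbset-1}, the attraction of $\mathcal{A}$ applied to $K(\tau-T)$, the uniform convergence $\Phi^\epsilon\to\Phi$ on $K(\tau-T)$ from Corollary \ref{up2}, and the invariance of $\mathcal{A}^\epsilon$; your contradiction/sequence framing is just a repackaging of the paper's direct $\delta$-estimate. The extra remarks on the uniformity of $M_3$ and on the metric bookkeeping are sensible refinements of points the paper leaves implicit, but they do not change the argument.
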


\begin{proof}
Since $f^\epsilon$, $g^\epsilon$, $\sigma^\epsilon$  and $f$, $g$, $\sigma$ satisfy {(\bf H1)-(\bf H3)} uniformly with respect to
$\epsilon \in (0,1)$,
we have that the family $K=\{K(\tau): \tau \in \mathbb{R}\}$ given by \eqref{absorbset} is a
closed $\mathcal{D}$-pullback absorbing set of $\Phi$ and $\Phi^\epsilon$ for all $\epsilon\in (0,1)$.
In particular, for every $\tau \in \mathbb{R}$, $\mathcal{A}(\tau)$ and $\mathcal{A}^\epsilon(\tau)$ are subsets of $K(\tau)$
for all $\epsilon \in (0,1)$.

Since $\mathcal{A} $ is the $\mathcal{D}$-pullback measure attractor of $\Phi$ in $(\mathcal{P}_4(\mathcal{C}_r), d_{\mathcal{P}(\mathcal{C}_r)})$,
we obtain that for every $\delta>0$, there exists $T=T(\delta, \tau, K)>0$ such that
\begin{align} \label{main_up p1}
d_{\mathcal{P}_4(\mathcal{C}_r)}
\big(\Phi(T, \tau-T)K(\tau-T), \mathcal{A}(\tau)\big)<\frac{1}{2}\delta.
\end{align}
By Corollary \ref{up2} we have
 $$\lim\limits_{\epsilon \to 0}\sup\limits_{\mu \in K(\tau-T)}d_{ \mathcal{P}_4(\mathcal{C}_r)}
\big(\Phi^\epsilon (T, \tau-T)\mu, \Phi(T, \tau-T)\mu \big)=0.$$
Hence, there exists $\epsilon_1 \in (0,1)$ such that for all $\epsilon \in (0,\epsilon_1)$,
\begin{align} \label{main_up p2}
\sup\limits_{\mu \in K(\tau-T)}d_{ \mathcal{P}_4(\mathcal{C}_r)}
       \big(\Phi^\epsilon (T, \tau-T) \mu, \Phi(T, \tau-T) \mu \big)< \frac{1}{2}\delta.
\end{align}
Since $\mathcal{A}^\epsilon ( \tau-T ) \subseteq K(\tau-T)$,
by \eqref{main_up p1} and \eqref{main_up p2}, we obtain that
\begin{align} \label{main_up p3}
d_{ \mathcal{P}_4(\mathcal{C}_r)}\big(\Phi(T, \tau-T)\mathcal{A}^\epsilon(\tau-T), \mathcal{A}(\tau)\big)
<\frac{1}{2}\delta.
\end{align}
and for all $\epsilon \in (0,\epsilon_1)$,
 \begin{align} \label{main_up p4}
\sup\limits_{\mu \in   {\mathcal{A}}^\epsilon (\tau-T)}d_{ \mathcal{P}_4(\mathcal{C}_r)}
\big(\Phi^\epsilon (T, \tau-T) \mu, \Phi(T, \tau-T)\mu \big)
<\frac{1}{2} \delta.
\end{align}
By \eqref{main_up p3}-\eqref{main_up p4}, we have that for all $\epsilon \in (0,\epsilon_1)$,
\begin{align} \label{main_up p5}
\sup\limits_{ \mu \in \mathcal{A}^\epsilon(\tau-T)} d_{ \mathcal{P}_4 (\mathcal{C}_r) }
       \big(\Phi^\epsilon (T,\tau-T) \mu, \mathcal{A}(\tau)\big)
<\delta.
\end{align}
Using \eqref{main_up p5} and the invariance of $\mathcal{A}^\epsilon$,
we get for all $\epsilon \in (0,\epsilon_1)$,
$$\sup\limits_{ \mu \in \mathcal{A}^\epsilon(\tau)}d_{\mathcal{P}_4(\mathcal{C}_r)}
\big(\mu, \mathcal{A}(\tau)\Big)<\delta.$$
Thus, the proof of Theorem \ref{main_up} is complete.
\end{proof}

Based on Theorems \ref{main-s} and \ref{main_up}, we immediately obtain the convergence relation of invariant measures between \eqref{ob-1'}
 and \eqref{ob-1} as $\epsilon \to 0^+$
if the measure attractors of both of them are singletons.

\begin{cor}\label{ip_con}
Under the hypotheses of Theorems \ref{main-s} and \ref{main_up}, if $f^\epsilon, \sigma^\epsilon$ and $f, \sigma$ are all time independent,
then the unique invariant measure
of \eqref{ob-1'} converges to that of \eqref{ob-1} as $\epsilon \to 0^+$.
 \end{cor}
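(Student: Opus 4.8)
The plan is to derive the statement formally from Theorem~\ref{main-s}(ii) and Theorem~\ref{main_up}, so the work is essentially bookkeeping. First I would record that the standing hypotheses of this section, together with those inherited from Theorems~\ref{main-s} and~\ref{main_up}, guarantee that {\bf (H1)}--{\bf (H3)}, \eqref{lamda}, \eqref{lamda-add}, \eqref{g^2}--\eqref{g^4} and the requirement $\psi(\cdot),\Theta(\cdot)\in L^{\infty}(\mathbb{R},\ell^2)$ all hold \emph{uniformly in} $\epsilon\in(0,1)$ for system \eqref{ob-3'}, and also hold for the limiting system \eqref{ob-3''} (with the distribution arguments simply absent). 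Since in addition $f^\epsilon,g,\sigma^\epsilon$ and $f,g,\sigma$ are time independent, Theorem~\ref{main-s}(ii) applies to each system separately and yields a unique invariant measure $\mu^\epsilon\in\mathcal{P}_4(\mathcal{C}_r)$ for \eqref{ob-3'} and a unique invariant measure $\mu\in\mathcal{P}_4(\mathcal{C}_r)$ for \eqref{ob-3''}.

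Next I would identify the pullback measure attractors. Because all coefficients are autonomous, $\Phi^\epsilon$ and $\Phi$ are autonomous as dynamical systems, and by Theorem~\ref{main-s}(i) their $\mathcal{D}$-pullback measure attractors are singletons. The invariance property $\Phi^\epsilon(t,\tau)\mathcal{A}^\epsilon(\tau)=\mathcal{A}^\epsilon(\tau+t)$, combined with $\Phi^\epsilon(t,\tau)=P^*_{\tau,\tau+t}$ and the time independence of the coefficients, forces the singleton $\mathcal{A}^\epsilon(\tau)$ to be $\tau$-independent and to consist of a fixed point of $P^*_{0,t}$ for every $t\ge 0$; by uniqueness of the invariant measure this gives $\mathcal{A}^\epsilon(\tau)=\{\mu^\epsilon\}$ for all $\tau\in\mathbb{R}$, and likewise $\mathcal{A}(\tau)=\{\mu\}$.

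Finally, Theorem~\ref{main_up} gives $\lim_{\epsilon\to0^+}d_{\mathcal{P}_4(\mathcal{C}_r)}(\mathcal{A}^\epsilon(\tau),\mathcal{A}(\tau))=0$ for every $\tau$. Since both sides are singletons, the Hausdorff semi-distance $d_{\mathcal{P}_4(\mathcal{C}_r)}(\{\mu^\epsilon\},\{\mu\})$ coincides with the genuine distance $\mathbb{W}_4(\mu^\epsilon,\mu)$ in $\mathcal{P}_4(\mathcal{C}_r)$, hence $\mathbb{W}_4(\mu^\epsilon,\mu)\to0$, i.e. $\mu^\epsilon\to\mu$ in $\mathcal{P}_4(\mathcal{C}_r)$ (a fortiori weakly). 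This is exactly the asserted convergence. There is no substantial obstacle: the corollary is a direct assembly of the two theorems, and the only point deserving a line of care is that Theorem~\ref{main_up} provides only the one-sided Hausdorff semi-distance toward $\mathcal{A}(\tau)$; but because the target attractor is a single point, that one-sided quantity already equals $\mathbb{W}_4(\mu^\epsilon,\mu)$, so no separate lower-semicontinuity estimate is required.
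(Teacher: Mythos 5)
Your proposal is correct and is essentially the argument the paper intends: the corollary is stated as an immediate consequence of Theorems \ref{main-s} and \ref{main_up}, and your bookkeeping (singleton attractors consist of the unique invariant measures, and the one-sided Hausdorff semi-distance between two singletons is the genuine distance) is exactly the missing glue. The only minor caveat is that the paper's Hausdorff semi-distance $d_{\mathcal{P}_4(\mathcal{C}_r)}$ is taken with respect to the weak metric $d_{\mathcal{P}(\mathcal{C}_r)}$ rather than $\mathbb{W}_4$, so the convergence you can directly extract is weak convergence of $\mu^\epsilon$ to $\mu$, which is all the corollary asserts.
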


\bibliographystyle{plain}
{\footnotesize

}


\end{document}